\theoremstyle{plain}
\newtheorem{theorem}{Theorem}[section]
\newtheorem{lemma}[theorem]{Lemma}
\newtheorem{proposition}[theorem]{Proposition}
\newtheorem{corollary}[theorem]{Corollary}
\theoremstyle{definition}
\newtheorem{definition}{Definition}[section]
\theoremstyle{remark}
\newtheorem{remark}{Remark}[section]
\numberwithin{equation}{section}
\newcommand{\noo}[1]{{]{#1}]}}
\newcommand{\Bl}[2]{{\varrho_{\Db_{0,T}^{}}^{}}({#1},{#2})}
\newcommand{\Db}  {{\mathbb D}}
\newcommand{\Eb}  {{\mathbb E}}
\newcommand{\Nb}  {{\mathbb N}}
\newcommand{\Rb}  {{\mathbb R}}
\newcommand{\Pb}  {{\mathbb P}}
\renewcommand{\P}{{\mathbb P}}
\newcommand{\E}{{\mathbb E}}
\newcommand{\N}{{\mathbb N}}
\newcommand{\Sim}{{\rm{Sim}}}
\newcommand{\Gsb}{{\bar{\mathcal{G}}}}
\newcommand{\Gsbq}{{\bar{\mathcal{G}}_T^\omega}}
\newcommand{\As} {{\mathcal A}}
\newcommand{\Bs} {{\mathcal B}}
\newcommand{\Cs} {{\mathcal C}}
\newcommand{\Fs} {{\mathcal F}}
\newcommand{\Gs} {{\mathcal G}}
\newcommand{\Hs} {{\mathcal H}}
\newcommand{\Ls} {{\mathcal L}}
\newcommand{\Ms} {{\mathcal M}}
\newcommand{\Ps} {{\mathcal P}}
\newcommand{\Vs} {{\mathcal V}}
\newcommand{\ze}{{\textbf{0}}}
\newcommand{\dd} {{\rm d}}
\newcommand{\defeq} {{\coloneqq}}
\newcommand{\eqdef} {{\eqqcolon}}
\newcommand{\omb}{\bar{\omega}}
\newcommand{\om}{\omega}
\renewcommand{\phi}{\varphi}
\newcommand{\ind}{1\!\kern-1pt \mathrm{I}}
\newcommand{\rsto}{]\!\kern-1.8pt ]}
\newcommand{\lsto}{[\!\kern-1.7pt [}
\newcommand\F{\mbox{I\kern-2pt F}}
\newcommand{\bindist}[2]{\textrm{Bin}({#1},{#2})}
\newcommand{\hypdist}[3]{\textrm{Hyp}({#1},{#2},{#3})} 
\newcommand{\bz}{{\bf{0}}}
\newcommand{\frompast}[6]{
\draw[#4,thick] (0,2)
\foreach \x in {1,...,#1}
{   -- ++(#2,rand*#3)
}
;
\draw[#5,thick] (2,1)
\foreach \x in {#1,...,400}
{   -- ++(#2,rand*#3)
}
;
\draw[#6,thick] (8,2)
\foreach \x in {400,...,600}
{   -- ++(#2,rand*#3)
}
;

\draw[#6,thick] (12,2.5)
\foreach \x in {600,...,748}
{   -- ++(#2,rand*#3)
}
;
\pgfmathsetseed{13399}
\draw[#6,opacity=0.4,thick] (5,2)
\foreach \x in {#1,...,250}
{   -- ++(#2,rand*#3)
}
;
\pgfmathsetseed{132995}
\draw[#6,opacity=0.4,thick] (8,1.3)
\foreach \x in {400,...,600}
{   -- ++(#2,rand*#3)
}
;

\draw[#6,opacity=0.4,thick] (12,2.7)
\foreach \x in {600,...,748}
{   -- ++(#2,rand*#3)
}
;

}
\newcommand{\bafo}[6]{
\draw[#4,opacity=0.5] (0,2)
\foreach \x in {1,...,#1}
{   -- ++(#2,rand*#3)
}
;
\draw[#5,opacity=0.5] (2,1)
\foreach \x in {#1,...,400}
{   -- ++(#2,rand*#3)
}
;
\draw[#6,opacity=0.5] (8,2)
\foreach \x in {400,...,600}
{   -- ++(#2,rand*#3)
}
;

\draw[#6,opacity=0.5] (12,2.5)
\foreach \x in {600,...,748}
{   -- ++(#2,rand*#3)
}
;
}
\newcommand{\Emmett}[6]{
\draw[#4,thick] (0,2)
\foreach \x in {1,...,#1}
{   -- ++(#2,rand*#3)
}
;
\draw[#5,thick] (2,1)
\foreach \x in {#1,...,400}
{   -- ++(#2,rand*#3)
}
;
\draw[#6,thick] (8,2)
\foreach \x in {400,...,600}
{   -- ++(#2,rand*#3)
}
;

\draw[#6,thick] (12,2.5)
\foreach \x in {600,...,724}
{   -- ++(#2,rand*#3)
}
;
\draw[#6,thick] (14.5,2.3)
\foreach \x in {725,...,748}
{   -- ++(#2,rand*#3)
}
;

}
\newcommand{\various}[6]{
\draw[#6, opacity=#4] (0,3)
\foreach \x in {1,...,#1}
{   -- ++(#2,rand*#3)
}
;
\draw[#6,opacity=#4] (2,1.8)
\foreach \x in {#1,...,449}
{   -- ++(#2,rand*#3)
}
;
\draw[#6,opacity=#5] (9,2.3)
\foreach \x in {400,...,450}
{   -- ++(#2,rand*#3)
}
;
\draw[#6,opacity=#5] (10,3.33)
\foreach \x in {550,...,650}
{   -- ++(#2,rand*#3)
}
;

\draw[#6,opacity=#5] (12,3.375)
\foreach \x in {600,...,748}
{   -- ++(#2,rand*#3)
}
;
}
\newcommand{\variousb}[6]{
\draw[#6, opacity=#4] (0,3)
\foreach \x in {1,...,#1}
{   -- ++(#2,rand*#3)
}
;
\draw[#6, opacity=#4] (1,3.7)
\foreach \x in {51,...,150}
{   -- ++(#2,rand*#3)
}
;

\pgfmathsetseed{1330}
\draw[#6,opacity=#4] (3,2.91)
\foreach \x in {300,...,449}
{   -- ++(#2,rand*#3)
}
;
\draw[#6,opacity=#4] (6,3.39)
\foreach \x in {400,...,548}
{   -- ++(#2,rand*#3)
}
;
\draw[#6,opacity=#5] (9,1.35)
\foreach \x in {550,...,599}
{   -- ++(#2,rand*#3)
}
;

\draw[#6,opacity=#5] (10,1.07)
\foreach \x in {600,...,698}
{   -- ++(#2,rand*#3)
}
;
\pgfmathsetseed{1334}
\draw[#6,opacity=#5] (12,2.93)
\foreach \x in {700,...,850}
{   -- ++(#2,rand*#3)
}
;
}
\title[Moran models and Wright-Fisher diffusions in random environment]{Moran models and Wright--Fisher diffusions with selection and mutation in a one-sided random environment}
\author{F. Cordero$^{1}$}
\address{$1$ Faculty of Technology, Bielefeld University, Box 100131, 33501 Bielefeld, Germany}
\email{fcordero@techfak.uni-bielefeld.de}
\author{G. V\'{e}chambre$^{2}$}
\email{vechambre@amss.ac.cn}
\address{$^2$ Hua Loo-Keng Center for Mathematical Sciences, Academy of Mathematics and Systems Science, Chinese Academy of Sciences, No. 55, Zhongguancun East Road, Haidian District, Beijing, China}
\date{\today}%
\begin{document}

\maketitle
\vspace{-1cm}
\begin{abstract}
Consider a two-type Moran population of size $N$ with selection and mutation, where the selective advantage of the fit individuals is amplified at extreme environmental conditions. Assume selection and mutation are weak with respect to $N$, and extreme environmental conditions rarely occur. We show that, as $N\to\infty$, the type frequency process with time speed up by $N$ converges to the solution of a Wright-Fisher-type SDE with a jump term modeling the effect of the environment. We use an extension of the \emph{ancestral selection graph} (ASG) to describe the model's genealogical picture. Next, we show that the type frequency process and the line-counting process of a pruned version of the ASG satisfy a moment duality. This relation yields a characterization of the asymptotic type distribution. We characterize the ancestral type distribution using an alternative pruning of the ASG. Most of our results are stated in annealed and quenched form.
\end{abstract}

\smallskip { \footnotesize
\noindent{\slshape\bfseries MSC 2010.} Primary:\, 82C22, 92D15  \ Secondary:\, 60J25, 60J27 

\smallskip 
\noindent{\slshape\bfseries Keywords.} Wright--Fisher diffusion, Moran model, random environment, ancestral selection graph, duality}

\setcounter{tocdepth}{1}
\tableofcontents
\section{Introduction}\label{S1}
The \emph{Wright--Fisher diffusion with mutation and selection} describes the evolution of the type composition of an infinite two-type haploid population, which is subject to mutation and selection. Fit individuals reproduce at rate $1+\sigma$, $\sigma\geq 0$, whereas unfit ones reproduce at rate $1$. In addition, individuals mutate at rate $\theta$ to the fit type with probability $\nu_0\in[0,1]$, and to the unfit type with probability {$\nu_1:=1-\nu_0$}. The proportion of fit individuals evolves forward in time according to the SDE
\begin{equation}\label{WFD}
 \dd X(t)=\left[\theta\nu_0(1-X(t))-\theta\nu_1 X(t) + \sigma X(t)(1-X(t))\right]\,\dd t+\sqrt{2 X(t)(1-X(t))}\,\dd B(t),\quad t\geq 0,
\end{equation}
where $(B(t))_{t\geq 0}$ is a standard Brownian motion. The solution of \eqref{WFD} arises as the \emph{diffusion approximation} of (properly normalized) continuous-time Moran models and discrete-time Wright--Fisher models. In the neutral case, it also appears as the limit of a large class of Cannings models (see \cite{M01}). The genealogical counterpart to $X$ is the \emph{ancestral selection graph} (ASG), which is a branching-coalescing process coding the potential ancestors of an untyped sample of the population at present. It was introduced by Krone and Neuhauser in \citep{KroNe97,NeKro97} and later extended to models evolving under general neutral reproduction mechanisms (\citep{EGT10,BLW16}), and {to general} forms of frequency dependent selection (see \citep{Ne99,BCH18,GS18,CHS19}).

For $\theta=0$, the process $(R(t))_{t\geq 0}$ that counts the lines in the ASG is moment dual with $1-X$, i.e.
\begin{equation}\label{mdintro}
\Eb\left[(1-X(t))^n\mid X(0)=x\right]=\Eb\left[(1-x)^{R(t)}\mid R(0)=n\right],\qquad n\in\Nb, \, x\in[0,1],\, t\geq 0.
\end{equation}
This relation yields an expression for the absorption probability of $X$ at $0$ in terms of the stationary distribution of $R$. {For $\theta>0$}, two variants of the ASG dynamically resolve mutation events and encode relevant information of the model: \textit{the killed ASG} and \textit{the pruned lookdown ASG}. The killed ASG was introduced in \citep{BW18} to determine if a sample consists only of unfit individuals. Its line-counting process extends Eq. \eqref{mdintro} to the case $\theta>0$ \cite[Prop. 1]{BW18} (see \cite[Prop. 2.2]{CM19} for a generalization). This allows to characterize the stationary distribution of $X$. The pruned lookdown ASG in turn was introduced in \citep{LKBW15} (see \citep{BLW16, FC17} for extensions) as a tool to determine the \emph{ancestral type distribution}, i.e. the type distribution of the individuals that have been successful in the long run. 
\smallskip

In many biological situations the strength of selection fluctuates in time. The influence of random fluctuations in selection intensities on the growth of populations has been the object of extensive research in the past (see e.g.\cite{G72,KL74,KL74b,KL75,Bu87, BG02, SJV10}), and it is currently experiencing renewed interest (see e.g. \cite{BCM19,CSW19, BEK19, ChK19, GJP18, GPS19}). A concrete example is given in \cite{PM14}, where different antibiotic treatment strategies against a bacterial population are compared. There it is proved that a constant administration of antibiotics is not optimal, and that the best treatment strategies depend on the length of treatment. In this paper, we consider the scenario where the selective advantage of fit individuals is accentuated by exceptional environmental conditions (e.g. extreme temperatures, precipitation, humidity variation, abundance of resources, etc.). As an example, consider a population consisting of fit and unfit individuals which is subject to catastrophes. Assume that only fit individuals are resistant to the catastrophes. Hence, shortly after a catastrophe the population may drop below its carrying capacity 
and subsequently grow quickly. Since fit individuals have a reproductive advantage, it is likely that their relative frequency will grow fast after a catastrophe. One may also think of a population consisting of individuals that are specialized to high temperatures, as well as wild-type individuals accustomed, but not specialized to them. The environment is characterized by moderately high temperatures, present most of the time, and short periods of extreme heat. It is then likely that specialized individuals have a (slight) reproductive advantage under moderate temperatures, and a more prominent advantage at extreme temperatures. 

%
\smallskip

To model the previously described scenario we use a two-type Moran population with mutation and selection, immersed in a varying environment. The environment is modeled via a countable collection of points $(t_i,p_i)_{i\in I}$ {in $(-\infty,\infty)\times(0,1)$, satisfying that $\sum_{t_i\in[s,t]} p_i<\infty$ for all $s<t$.} Each $t_i$ represents a time of an instantaneous environmental change; the \emph{peak} $p_i$ models the strength of this event: at time $t_i$ each fit individual independently reproduces with probability $p_i$. Each offspring replaces a different individual in the population, so that the population size remains constant. The summability of the peaks assures that the number of reproductions in any compact time interval is almost surely finite. 
In this context, we show that the type-frequency process is continuous with respect to the environment. The proof uses coupling techniques that uncover the effect of small environmental changes. 
\smallskip

Next, we consider a random environment given by a Poisson point process on {$(-\infty,\infty) \times (0,1)$} with intensity measure $\dd t \times \mu$, where $\dd t$ stands for the Lebesgue measure and $\mu$ is a measure on $(0,1)$ satisfying $\int x \mu(\dd x) < \infty$. {Then, we let population size grow to infinity, and} we show that, in an appropriate parameter regime, the fit-type frequency process converges to the solution of the SDE
 \begin{equation}\label{WFSDE}
 \dd X(t)=\theta\left(\nu_0(1-X(t))-\nu_1 X(t)\right)\dd t +X(t-)(1-X(t-))\dd S(t) +\sqrt{2 X(t)(1-X(t))}\dd B(t),\quad t\geq 0,
\end{equation}
where $S(t)\coloneqq \sigma t+J(t)$, and $J$ is a pure-jump subordinator with L\'evy measure $\mu$, independent of $B$, which represents the cumulative effect of the environment. We refer to $X$ as the \emph{Wright--Fisher diffusion in random environment}. We prove the convergence in an annealed setting, i.e. when the environment is random. For environments given by compound Poisson processes, we show that the convergence also holds in a quenched sense, i.e. when a realization of the environment is fixed. For $\theta=0$, Eq. \eqref{WFSDE} is a particular case of \cite[Eq. 3.3]{BCM19}, which arises as the {large population limit of} a family of discrete-time Wright--Fisher models \cite[Thm. 3.2]{BCM19}.
\smallskip

Next, we generalize the construction of the ASG, the killed ASG and the {pruned lookdown} ASG to incorporate the effect of the environment. In the annealed case, we establish a relation between $X$, the line-counting process of the killed {ASG, and} the total increment of the environment; we refer to this relation as a \emph{reinforced moment duality}. The latter is a central tool to characterize the asymptotic type frequencies. We also express the ancestral type distribution in terms of the line-counting process of the pruned lookdown ASG. Analogous results are obtained in the more involved quenched setting. 
\smallskip

As an application of our results, we compare the long-term behavior of two Wright--Fisher diffusions without mutations; the first one having parameter $\sigma=0$ and an environment with L\'evy measure $\mu$; the second one having parameter $\sigma=\int_{(0,1)} y\mu(\dd y)$ and no environment. We prove that the probability of fixation of the fit type is smaller under first model than under the second one, provided that the initial frequency of fit individuals is sufficiently large.

\smallskip

The analysis of a more realistic scenario where environmental changes are not always favorable to the same type can not be done via the methods presented in this paper. The main reason is that in such a setting the frequency process does not admit a moment dual. To circumvent this problem one has to take into account the whole combinatorics of the ASG, which is a cumbersome object. This is the object of a forthcoming study. 

We would also like to mention a parallel development by \citet{CSW19}. They study the accessibility of the boundaries and the fixation probabilities of a generalization of the SDE \eqref{WFSDE} with $\theta=0$. \cite{CSW19} makes only use of the ASG and does not cover the case $\theta>0$, where the killed and the pruned lookdown ASG play a pivotal role. Moreover, the reinforced moment duality, and all the results obtained in the quenched setting, are to the best of our knowledge new. 
\smallskip

\smallskip

The article is organized as follows. An outline of the paper containing our main results is given in Section~\ref{S2}. In Section~\ref{S3} we prove the continuity of the type frequency process in the Moran model with respect to the environment, that \eqref{WFSDE} is well-posed, and that it arises in the large population {limit of} the type frequency process of a sequence of Moran models. In Section \ref{S4} we give more detailed definitions of the ASG, the killed ASG and the pruned lookdown ASG. Section~\ref{S5} is devoted to the proofs of: (i) the annealed moment duality between the process $X$ and the line-counting process of the killed ASG, (ii) the long-term behavior of the annealed type frequency process, and (iii) the annealed ancestral type distribution. The quenched versions of these results are proved in Section \ref{S6}. {Section \ref{S7} provides additional (quenched) results for environments having finitely many jumps in any compact time interval.}

\section{Description of the model and main results}\label{S2} 
{\bf Notation.} The positive integers are denoted by $\N$, and we set $\N_0\coloneqq \N\cup\{0\}$. For $m\in \N$, \[[m]\coloneqq \{1,\ldots,m\},\quad [m]_0\coloneqq [m]\cup\{0\},\quad \text{and} \quad \noo{m}\coloneqq [m]\setminus\{1\}.\]
For $s<t$, we denote by $\Db_{s,t}$ (resp. $\Db$) the space of c\`{a}dl\`{a}g functions from $[s,t]$ (resp. $\Rb$) to $\Rb$, which is endowed with the Billingsley metric inducing the $J_1$-Skorokhod topology and makes the space complete (see Appendix \ref{A1}). For any Borel set $S\subset\Rb$, denote by $\Ms_f(S)$ (resp. $\Ms_1(S)$) the set of finite (resp. probability) measures on $S$. We use $\xrightarrow[]{(d)}$ to denote convergence in distribution of random variables and $\xRightarrow[]{(d)}$ for convergence in distribution of c\`{a}dl\`{a}g process.
\smallskip

For $n\in \N_0$ and $k,m\in[n]_0$, we write $K\sim \hypdist{n}{m}{k}$ if $K$ is a hypergeometric random variable with parameters $n,m$, and $k$, i.e \[\P(K=i)= {\binom{n-m}{k-i} \binom{m}{i}}/{\binom{n}{k}},\quad i\in[k\wedge m]_0.\]
For~$x\in[0,1]$ and~$n\in \N$, we write $B\sim \bindist{n}{x}$ if $B$ is a binomial random variable with parameters $n$ and $x$, i.e. \[\P(B=i)= \binom{n}{i} x^i (1-x)^{n-i},\quad i\in[n]_0.\]
Relevant notations introduced in the forthcoming sections are collected in Section \ref{Nota}. 
\subsection{Moran models in deterministic pure-jump environments}\label{s21}

Consider a population of size $N$ with two types, type $0$ and $1$, subject to mutation {and selection influenced by} a deterministic environment. {The latter} is modeled by an at most countable collection $\zeta\coloneqq (t_k, p_k)_{k \in I}$ of points in $(-\infty,\infty) \times (0,1)$ satisfying for any $s,t\in\Rb$ with $s\leq t$ that 
\begin{equation} \label{summableassumption}
\sum_{t_k \in[s, t]} p_k < \infty.
\end{equation}
We refer to $p_k$ as the \emph{peak of the environment} at time $t_k$. The individuals in the population undergo the following dynamic. Each individual independently mutates at rate $\theta_N\geq 0$ with probability $\nu_{0}\in[0,1]$ (resp. $\nu_1\coloneqq 1-\nu_0$) to type $0$ (resp. $1$). Reproduction occurs independently from mutation. Individuals of type $1$ reproduce at rate $1$, whereas individuals of type $0$ reproduce at rate $1+\sigma_N$, $\sigma_N\geq0$\footnote{The subscript $N$ in the parameters $\sigma_N$ and $\theta_N$ emphasizes their dependence on $N$. In Theorem \ref{thm2.2} we will require that they are asymptotically proportional to $1/N$.}. Thus, we refer to type $0$ (resp. type $1$) as the \emph{fit} (resp. \emph{unfit}) type. In addition, at time $t_k$ each type $0$ individual independently reproduces with probability $p_k$. At reproduction times: (a) each individual produces at most one offspring, which inherits the parent's type, and (b) if  $n$ individuals are born, $n$ individuals are randomly sampled without replacement from {the population present before the reproduction event (including the parents) } to die, hence keeping the size of the population constant. 

\subsection*{Graphical representation}
In the absence of environmental factors (i.e. $\zeta=\emptyset$), it's classical to describe the evolution of the population by means of the graphical representation as an {interacting particle system (IPS).} This decouples the randomness of the model coming from the initial type configuration from the one coming from mutations and reproductions. {We now extend the graphical representation to incorporate the effect of the environment (see Section \ref{s31} for a more detailed description). 
\smallskip

In the graphical representation individuals are represented by horizontal lines at levels $i\in [N]$} {(see Fig. \ref{particlepicture})}. {Time runs forward from left to right. Potential reproduction events are depicted by arrows, with the (potential) parent at the tail and the offspring at the tip. We distinguish between neutral and selective arrows. \emph{Neutral arrows} have a filled arrowhead; they occur at rate $1/N$ per pair of lines. \emph{Selective arrows} have an open arrowhead;  they occur in two independent ways: first, at rate $\sigma_N/N$ per pair of lines, and second, at any time $t_k$, $k\in I$, a random number $n_k\sim\bindist{N}{p_k}$ of lines shoot selective arrows to $n_k$ individuals in the population. Furthermore, beneficial (deleterious) mutations, depicted as circles (crosses), occur at rate $\theta_N \nu_0$ (at rate $\theta_N \nu_1$) per line. 
\smallskip

Note that for any $s<t$, the number of non-environmental graphical elements present in $[s,t]$ is almost surely finite. Moreover, thanks to Assumption \eqref{summableassumption}, we have
\begin{align}
\Eb\left[\sum_{t_k\in[s,t]}n_k\right]=N\sum_{t_k\in[s,t]}p_k<\infty.\label{finitenbofevents}
\end{align}
Hence, $\sum_{t_k\in[s,t]}n_k<\infty$ almost surely, i.e. the number of arrows in $[s,t]$ due to peaks of the environment is almost surely finite.
\smallskip

Once the graphical elements in $[s,t]\times [N]$ are drawn, we specify the initial conditions by assigning types to the $N$ lines at time $s$ and propagate them forward in time according to the following rules: the type of a line right after a circle (resp. cross) is $0$ (resp. $1$); type $0$ propagates through neutral arrows \emph{and} selective arrows; type $1$ propagates only through neutral arrows. }
\begin{figure}[t!]
\begin{minipage}{.4\textwidth}
    \centering
    \scalebox{0.65}{\begin{tikzpicture} 
        
        \draw[dashed, opacity=0.4] (1,-0.2) --(1,1) (1,2)--(1,3);
        \draw[dashed, opacity=0.4] (6.2,-0.2) --(6.2,0) (6.2,1) --(6.2,2);
        
        \node [right] at (-0.2,-0.5) {$s$};
        \node [right] at (0.8,-0.5) {$t_0$};
        \node [right] at (6,-0.5) {$t_1$};
        \node [right] at (8.3,-0.5) {$s+T$};
        \node [right] at (3.5,-0.8) {$t$};

        \draw[-{triangle 45[scale=5]},thick] (4.5,2) -- (4.5,1) node[text=black, pos=.6, xshift=7pt]{};
        \draw[thick] (0,1) -- (8.5,1);
        \draw[thick] (8.5,2) -- (0,2);
        \draw[thick] (8.5,3) -- (0,3);
        \draw[thick] (0,4) -- (8.5,4);
        \draw[thick] (0,0) -- (8.5,0);
        
        \draw[-{triangle 45[scale=5]},thick] (.4,1) -- (.4,0);
        \draw[-{open triangle 45[scale=5]},thick] (7.6,1) -- (7.6,4);
        \draw[-{open triangle 45[scale=5]},thick] (1.7,2) -- (1.7,4);
        \draw[-{open triangle 45[scale=5]},thick] (1,3) -- (1,4);
        \draw[-{open triangle 45[scale=5]},thick] (1,1) -- (1,2);
        \draw[-{open triangle 45[scale=5]},thick] (6.2,4) -- (6.2,2);
        \draw[-{open triangle 45[scale=5]},thick] (6.2,0) -- (6.2,1);
        \draw[-{open triangle 45[scale=5]},thick] (2.5,0) -- (2.5,1);
        \draw[-{open triangle 45[scale=5]},thick] (7,2) -- (7,0);
        \draw[-{triangle 45[scale=5]},thick] (3,3) -- (3,1);
        
         \draw[-{angle 45[scale=5]}] (2.5,-0.5) -- (4.5,-0.5) node[text=black, pos=.6, xshift=7pt]{};
        \node[ultra thick] at (3.5,4) {$\bigtimes$} ;
        \node[ultra thick] at (6.6,4) {$\bigtimes$} ; 
        \draw (2.1,2) circle (1.5mm)  [fill=white!100];  
        \draw (4.1,0) circle (1.5mm)  [fill=white!100];
        
        \node [right] at (-0.5,0) {$1$};
        \node [right] at (-0.5,1) {$1$};
        \node [right] at (-0.5,2) {$1$};
        \node [right] at (-0.5,3) {$0$};
        \node [right] at (-0.5,4) {$1$};
        \node [right] at (8.5,0) {$0$};
        \node [right] at (8.5,1) {$0$};
        \node [right] at (8.5,2) {$0$};
        \node [right] at (8.5,3) {$0$};
        \node [right] at (8.5,4) {$0$};

        \end{tikzpicture}    }\end{minipage}\begin{minipage}{.4\textwidth}
        \centering
    \scalebox{0.65}{\begin{tikzpicture}   
        \draw[dashed,thick,opacity=0.3] (1,-0.2) --(1,1) (1,2)--(1,3) (1,4)--(1,4);
        \draw[dashed,thick,opacity=0.3] (6.2,-0.2) --(6.2,0) (6.2,1) --(6.2,2) (6.2,4)--(6.2,4);
        \node [right] at (-0.2,-0.6) {$T$};
         \node [right] at (0.3,-0.6) {$s+T-t_0$};
        \node [right] at (5.5,-0.6) {$s+T-t_1$};
        \node [right] at (8.3,-0.6) {$0$};
        \node [right] at (3.5,-0.8) {$\beta$};

        \draw[-{triangle 45[scale=5]}] (4.5,-0.5) -- (2.5,-0.5) node[text=black, pos=.6, xshift=7pt]{};

        \draw[thick,opacity=0.3] (0,1) -- (8.5,1);
        \draw[thick,opacity=0.3] (8.5,2) -- (0,2);
        \draw[thick,opacity=0.3] (8.5,3) -- (0,3);
        \draw[thick,opacity=0.3] (0,0) -- (8.5,0);
        \draw[thick,opacity=0.3] (0,4) -- (8.5,4);
        \draw[thick] (0,4) -- (6.2,4);
        \draw[thick] (0,2) -- (8.5,2);
        \draw[thick] (4.5,1) -- (8.5,1);
        \draw[thick] (0.4,0) -- (6.2,0);
        \draw[thick] (1,1) -- (0.0,1);
        \draw[thick] (1,3) -- (0.0,3);
        \draw[-{triangle 45[scale=5]},thick] (4.5,2) -- (4.5,1) node[text=black, pos=.6, xshift=7pt]{};
        \draw[-{triangle 45[scale=5]},thick] (0.4,1) -- (0.4,0);
        \draw[-{open triangle 45[scale=5]},thick] (1.7,2) -- (1.7,4);
        \draw[-{open triangle 45[scale=5]},thick] (1,3) -- (1,4);
        \draw[-{open triangle 45[scale=5]},thick] (1,1) -- (1,2);
        \draw[-{open triangle 45[scale=5]},thick] (6.2,4) -- (6.2,2);
        \draw[-{open triangle 45[scale=5]},thick] (6.2,0) -- (6.2,1);
        \draw[-{triangle 45[scale=5]},thick, opacity=0.3] (3,3) -- (3,1);
        \draw[-{open triangle 45[scale=5]},thick, opacity=0.3] (7.6,1) -- (7.6,4);
        \draw[-{open triangle 45[scale=5]},thick, opacity=0.3] (2.5,0) -- (2.5,1);
        \draw[-{open triangle 45[scale=5]},thick, opacity=0.3] (7,2) -- (7,0);
        \node[ultra thick] at (3.5,4) {$\bigtimes$} ;
        \draw[thick] (2.1,2) circle (1.5mm)  [fill=white!100];  
        \draw[thick] (4.1,0) circle (1.5mm)  [fill=white!100];
        \node[ultra thick,opacity=0.3] at (6.6,4) {$\bigtimes$} ; 
        
        \node [right,white] at (-0.5,0) {$1$};
        \node [right,white] at (-0.5,1) {$1$};
        \node [right,white] at (-0.5,2) {$1$};
        \node [right,white] at (-0.5,3) {$0$};
        \node [right,white] at (-0.5,4) {$1$};
        \node [right,white] at (8.5,0) {$0$};
        \node [right,white] at (8.5,1) {$0$};
        \node [right,white] at (8.5,2) {$0$};
        \node [right,white] at (8.5,3) {$0$};
        \node [right,white] at (8.5,4) {$0$};
        \end{tikzpicture}    }
        \end{minipage}
                              
    \caption{Left: a realization of the Moran IPS; time runs forward from left to right; the environment has peaks at times $t_0$ and $t_1$. Right: the ASG that arises from the second and third lines (from bottom to top) in the left picture, with the potential ancestors drawn in black; time runs backward from right to left; backward time $\beta\in[0,T]$ corresponds to forward time $t=s+T-\beta$.}
    \label{particlepicture}
\end{figure}
\subsection*{Reading off ancestries in the Moran model}
{The \emph{ancestral selection graph} (ASG) was introduced by Krone and Neuhauser in \cite{KroNe97} (see also \cite{NeKro97}) to study the genealogical relations in the diffusion limit of the Moran model with mutation and selection. In what follows, we briefly explain how to adapt this construction to the Moran model in deterministic environment.
\smallskip

Consider a realization of the {IPS} associated to the Moran model in the environment $\zeta\coloneqq (t_k, p_k)_{k \in I}$  in the time interval $[s,s+T]$. Fix a sample of $n$ individuals at time $s+T$ and trace backward in time (from right to left in Fig. \ref{particlepicture}) the lines of their potential ancestors (i.e. the lines that are ancestral to the sample for some type-configuration at time $s$), ignoring the effect of mutations; backward time $\beta\in[0,T]$ corresponds to the forward time $t=s+T-\beta$. We do this as follows. When a neutral arrow joins two individuals in the current set of potential ancestors, the two lines coalesce into a single one at the tail of the arrow. When a neutral arrow hits a potential ancestor from outside the current set of potential ancestors, the hit line is replaced by the line at the tail of the arrow. When a selective arrow hits the current set of potential ancestors, the individual that is hit has two possible parents, the \textit{incoming branch} at the tail and the \textit{continuing branch} at the tip. The true parent depends on the type of the incoming branch, but for the moment we work without types. These unresolved reproduction events can be of two types: a \emph{branching} event if the selective arrow emanates from an individual outside the current set of potential ancestors, and a \emph{collision} event if the selective arrow links two current potential ancestors. Note that at the peak times, multiple lines in the ASG can be hit by selective arrows, and therefore, multiple branching and collision events can occur simultaneously. Mutations are superposed on the lines of the ASG. 
\smallskip

The object arising under this procedure up to time $\beta=T$ is called the \emph{Moran-ASG in $[s,s+T]$ under the environment $\zeta$}. It contains all the lines that are potentially ancestral (ignoring mutation events) to the lines sampled at time $t=s+T$, see Fig. \ref{particlepicture}. Note that, since the number of events occurring in $[s,s+T]$ is almost surely finite ({see \eqref{finitenbofevents}}), the Moran-ASG in $[s,s+T]$ is well-defined.
\smallskip

Given an assignment of types to the lines present in the ASG at time $t=s$, we can extract the true genealogy and determine the types of the sampled individuals at time $t=s+T$. {To this end}, we propagate types forward in time along the lines of the ASG taking into account mutations and reproductions, with the rule that if a line is hit by a selective arrow, the incoming line is the ancestor if and only if it is of type~$0$, see Figure~\ref{fig:peckingorder}. This rule is called the \emph{pecking order}. Proceeding in this way, the types in~$[s,s+T]$ are determined along with the true genealogy.}
\begin{figure}[h!]
	\begin{minipage}{0.23 \textwidth}
		\centering
		\scalebox{.8}{
			\begin{tikzpicture}
			\draw[line width=0.5mm] (0,1) -- (2,1);
			\draw[color=black] (0,0) -- (1,0);
			\draw[-{open triangle 45[scale=2.5]},color=black] (1,0) -- (1,1) node[text=black, pos=.6, xshift=7pt]{};
			\node[above] at (1.8,1) {\tiny D};
			\node[above] at (0.3,1) {\tiny C};
			\node[above] at (0.3,0) {\tiny I};
			\node[left] at (0,1) {\tiny $1$};
			\node[left] at (0,0) {\tiny $1$};
			\node[right] at (2,1) {\tiny $1$};
			\end{tikzpicture}}
	\end{minipage}\hfill
	\begin{minipage}{0.23 \textwidth}
		\centering
		\scalebox{.8}{
			\begin{tikzpicture}
			\draw[line width=0.5mm] (0,1) -- (2,1);
			\draw[color=black] (0,0) -- (1,0);
			\draw[-{open triangle 45[scale=2.5]},color=black] (1,0) -- (1,1) node[text=black, pos=.6, xshift=7pt]{};
			\node[above] at (1.8,1) {\tiny D};
			\node[above] at (0.3,1) {\tiny C};
			\node[above] at (0.3,0) {\tiny I};
			\node[left] at (0,1) {\tiny $0$};
			\node[left] at (0,0) {\tiny $1$};
			\node[right] at (2,1) {\tiny $0$};
			\end{tikzpicture}}
	\end{minipage}\hfill
	\begin{minipage}{0.23 \textwidth}
		\centering
		\scalebox{.8}{
			\begin{tikzpicture}
			\draw[] (0,1) -- (2,1);
			\draw[line width=0.5mm] (0,0) -- (1,0);
			\draw[line width=0.5mm] (1,1) -- (2,1);
			\draw[-{open triangle 45[scale=2.5]},color=black,line width=0.5mm] (1,-0.025) -- (1,1) node[text=black, pos=.6, xshift=7pt]{};
			\node[above] at (1.8,1) {\tiny D};
			\node[above] at (0.3,1) {\tiny C};
			\node[above] at (0.3,0) {\tiny I};
			\node[left] at (0,1) {\tiny $1$};
			\node[left] at (0,0) {\tiny $0$};
			\node[right] at (2,1) {\tiny $0$};
			\end{tikzpicture}}
	\end{minipage}\hfill
	\begin{minipage}{0.23 \textwidth}
		\centering
		\scalebox{.8}{
			\begin{tikzpicture}
			\draw[] (0,1) -- (2,1);
			\draw[line width=0.5mm] (0,0) -- (1,0);
			\draw[line width=0.5mm] (1,1) -- (2,1);
			\draw[-{open triangle 45[scale=2.5]},color=black,line width=0.5mm] (1,-0.025) -- (1,1) node[text=black, pos=.6, xshift=7pt]{};
			\node[above] at (1.8,1) {\tiny D};
			\node[above] at (0.3,1) {\tiny C};
			\node[above] at (0.3,0) {\tiny I};
			\node[left] at (0,1) {\tiny $0$};
			\node[left] at (0,0) {\tiny $0$};
			\node[right] at (2,1) {\tiny $0$};
			\end{tikzpicture}}
	\end{minipage}
	\caption{The descendant line (D) splits into the continuing line (C) and the incoming line (I). The incoming line is ancestral if and only if it is of type~$0$. The true ancestral line is drawn in bold.}
	\label{fig:peckingorder}
\end{figure} 
\vspace{-1cm}
\subsection*{Evolution of the type composition}
{Consider the set $\Db^\star$ of non-decreasing functions $\om\in\Db$ satisfying
\begin{itemize}
 \item[(i)] for all $s<t\in\Rb$, $\Delta \om(t)\coloneqq \om(t)-\om(t-)\in[0,1)$ and $\sum_{u\in[s,t]}\Delta \om(u)<\infty$,
 \item[(ii)] $\om$ is pure-jump, i.e. for all $s<t$, $\om(t)=\om(s)+\sum_{u\in(s,t]}\Delta \om(u)$.
\end{itemize} 
Note that the set of environments $\zeta\coloneqq (t_k, p_k)_{k \in I}$ satisfying \eqref{summableassumption} can be identified with the set of functions $\om\in \Db^\star$ with $\om(0)=0$. Indeed, for any $\om\in\Db^\star$, the collection of points $\{(t,\Delta \om(t)):\, \Delta\om(t)>0)\}$ is countable and satisfy \eqref{summableassumption}.\footnote{The same collection is obtained if we add a constant to $\om$; this is why we need to fix the value of $\om(0)$.} Conversely, for any $\zeta\coloneqq (t_k, p_k)_{k \in I}$, the function $\om:\Rb\to\Rb$ defined via
\[\om(t)\defeq\sum_{t_k\in(0,t]}p_k\quad\textrm{for $t\geq 0$, and}\quad {\om(t)\defeq-\sum_{t_k\in(t,0]}p_k}\quad\textrm{for $t<0$},\]
belongs to $\Db^\star$.} For this reason, {we often abuse notation} and refer to the elements of $\Db^\star$ as environments. In addition, an environment $\om\in\Db^\star$ is said to be \textit{simple} if $\om$ has only a finite number of jumps in any compact time interval. We denote by $\textbf{0}$ the environment corresponding to $\zeta=\emptyset$ and refer to it as the \emph{null environment}.
\smallskip

We denote by $Z_N^\om(t)$ the number of fit individuals at time $t$ {in a Moran} population of size $N$ subject to environment $\om\in\Db^\star$. We refer to $Z_N^\om\defeq(Z_N^\om(t))_{t\geq 0}$ as \emph{the quenched fit-counting process}. 
In particular, $Z_N^\textbf{0}$ is just the continuous-time Markov chain on $[N]_0$ with infinitesimal generator
\[\As_N^0 f(n)\defeq \left[(1+\sigma_N)\frac{n(N-n)}{N}+\theta_N\nu_0(N-n)\right](f(n+1)-f(n))
+ \left[\frac{n(N-n)}{N}+\theta_N\nu_1 n\right](f(n-1)-f(n)).\]

Note that if $\om$ has a jump at time $t$, the number $n(t)$ of individuals placing an offspring is a binomial random variable with parameters $Z_N^\om(t-)$ and $\Delta\om(t)$. Since the $n(t)$ individuals that will be replaced are chosen uniformly at random, the additional number of fit individuals after the reproduction event is a hypergeometric random variable with parameters $N$, $N-Z_N^\om(t-)$ and $n(t)$. Therefore, the dynamic of $Z_N^\om$ is as follows. Recall that in any finite time interval, the number of environmental reproductions is almost surely finite. Thus, we can define $(S_i)_{i\in\Nb}$ as the increasing sequence of times at which environmental reproductions take place. We set $S_0\coloneqq 0$. By construction, $(S_i)_{i\in\Nb}$ is Markovian and its transition probabilities are given by
\[\mathbb{P}(S_{i+1} > t \mid S_i = s ) = \prod_{u \in (s, t]} (1- \Delta \om(u))^N,\quad i\in\Nb_0, 0\leq s\leq t.\] If $Z_N^\om(0)=n\in [N]_0$, then $Z_N^\om$ evolves in $[0,S_1)$ as $Z_N^\textbf{0}$ started at $n$. For $i\in\Nb$, if $Z_N^\om(S_{i}-)=k$, then $Z_N^\om(S_{i})=k+H(N,N-k,\tilde B_i(k))$, where the random variables $H(N,N-k,b)\sim \textrm{Hyp}(N,N-k,b)$, $b\in[k]_0$, and $\tilde B_i(k)$ are independent, and $\tilde B_i(k)$ is a binomial random variable with parameters  $k$ and $\Delta \om(S_i)$ conditioned to be positive. Then, $Z_N^\om$ evolves in $[S_i,S_{i+1})$ as $Z_N^\textbf{0}$ {started at $Z_N^\om(S_{i})$.}
\smallskip

Let us fix $T>0$. We end this section with our first main result, which provides the continuity in $[0,T]$ of the \emph{fit-counting} process with respect to the environment. Note that the restriction of the environment to $[0,T]$ can be identified with an element of
\begin{equation}\label{dbs}
 \Db_T^\star\coloneqq \{\om\in\Db_{0,T}: \om(0)=0,\, \Delta \om(t)\in[0,1)\textrm{ for all $t\in[0,T]$, $\om$ is non-decreasing and pure-jump}\}.
\end{equation}
Moreover, we equip $\Db_T^\star$ with the metric $d_T^\star$ defined in Appendix \ref{A1} {(see \eqref{defdtstar})}.
\begin{theorem}[Continuity]\label{thm2.1}
Let $\om\in\Db_T^\star$ and let $\{\om_k\}_{k\in\Nb}\subset\Db_T^\star$ be such that $d_T^\star(\om_k,\om)\to 0$ as $k\to\infty$. If $Z_N^{\om_k}(0)=Z_N^\om(0)$ for all $k\in\Nb$, then  
\[(Z_N^{\om_k}(t))_{t\in[0,T]}\xRightarrow[k\to \infty]{(d)}(Z_N^\om(t))_{t\in[0,T]}.\]
\end{theorem}
Theorem \ref{thm2.1} is proved in Section \ref{s31}.
\subsection{Moran models in an environment driven by a subordinator} \label{s22} 
In contrast to Section \ref{s21}, we consider here a random environment given by a Poisson point process $(t_i, p_i)_{i \in I}$ on $(-\infty,\infty) \times (0,1)$ with intensity measure $\dd t \times \mu$, where $\dd t$ stands for the Lebesgue measure and $\mu$ is a measure on $(0,1)$ satisfying 
\begin{equation}\label{intmu}
\int_{(0,1)} x \mu(\dd x) < \infty.
\end{equation}
{The latter} implies that {$(t_i, p_i)_{i \in I}$ almost surely satisfies Assumption \eqref{summableassumption}.} In particular, setting $J(t) \coloneqq  \sum_{t_i\in(0,t]} p_i$, for $t\geq 0$, and {$J(t)\defeq -\sum_{t_i\in(t,0]}p_i$}, for $t<0$, we have $J\in\Db^\star$ almost surely. Moreover, by the L\'evy-Ito decomposition, $(J(t))_{t\in\Rb}$ is a pure-jump subordinator with L\'evy measure $\mu$. If the measure $\mu$ is finite, $J$ is a compound Poisson process, and thus, the environment $J$ is almost surely simple. 
\smallskip

We will see in Section \ref{s31} that, using the graphical representation, one can simultaneously construct Moran models for any $\om\in\Db^\star$. Now, consider an independent pure-jump subordinator $(J(t))_{t\geq 0}$, with L\'evy measure $\mu$ on $(0,1)$ satisfying \eqref{intmu}. Thanks to Theorem \ref{thm2.1} the process $Z_N^J\defeq (Z_N^J(t))_{t\geq 0}$ is well defined. We refer to $Z_N^J$ as the \emph{annealed fit-counting process}. By definition, we have \[\Pb(Z_N^J \in \cdot)=\int \Pb(Z_N^\om \in \cdot) \Pb(J \in \dd \om).\] In other words, $\Pb(Z_N^\om \in \cdot)$ is the law of $Z_N^J$ conditionally on a realization $\om$ of the environment (i.e. $\Pb(Z_N^\om \in \cdot) = \Pb(Z_N^J \in \cdot \mid J=\om)$) and is classically referred to as the \textit{quenched measure} while $\Pb(Z_N^J \in \cdot)$ integrates the effect of the random environment and is classically referred to as the \textit{annealed measure}. 
The process $Z_N^J$ is a continuous-time Markov chain on $[N]_0$ with infinitesimal generator

\[\As_N f(n)\coloneqq \As_N^0 f(n) + \int_{(0,1)}\left(\Eb\left[f\left(n+\Hs(N,N-n,B_n(u))\right)\right]-f(n)\right){\mu(\dd u)},\quad n\in [N_0], \]
where $B_n(u)\sim \textrm{Bin}(n,u)$, and for any $i\in[n]_0$, $\Hs(N,N-n,i)\sim\textrm{Hyp}(N,N-n,i)$ are independent. 
\smallskip

The dynamic of the graphical representation is as follows: For each $i,j\in [N]$ with $i\neq j$, selective (resp. neutral) arrows from level $i$ to level $j$ appear at rate $\sigma_N/N$ (resp. $1/N$). For each $i\in [N]$, open circles (resp. crosses) appear at level $i$ at rate $\theta_N\nu_0$ (resp. $\theta_N\nu_1$). For each $k \in [N]$, every group of $k$ lines is subject to simultaneous potential reproductions at rate 
\[\sigma_{N,k}\coloneqq \int_{(0,1)}y^k (1-y)^{N-k}\mu(\dd y),\] resulting in the appearance of $k$ selective arrows from the lines of this group (of potential parents) to the lines of a group of size $k$ (the potential descendants) that is chosen uniformly at random among subsets of size $k$ of the $N$ individuals. The $k$ selective arrows are drawn uniformly at random from the $k$ potential parents to the $k$ potential descendants. Recall that only type $0$ propagates through selective arrows, while both types propagates through neutral arrows. The appearance of a selective arrow is therefore silent when the potential parent at its tail is of type $1$. 

\subsection{The Wright--Fisher diffusion in random environment}\label{s23}
In this section we are interested in the Wright--Fisher diffusion in random environment described in the introduction{ as the solution of the SDE \eqref{WFSDE}. In {Section \ref{S3}} we will see that, indeed, for any $x_0\in[0,1]$, this SDE has a pathwise unique strong solution starting at $x_0$ (see Proposition \ref{eandu}).}
\smallskip

Consider a pure-jump subordinator $J=(J(s))_{s\geq 0}$  with L\'evy measure satisfying \eqref{intmu} and an independent standard Brownian motion $B=(B(s))_{s\geq 0}$. For any $T>0$, the solution of \eqref{WFSDE} in $[0,T]$ is a measurable function of $(B(s),J(s))_{s\in[0,T]}$, which we denote by $F(B,J)$. 
A regular version of the conditional law $\Pb(F(B,J) \in \cdot \mid J=\om)$ of $F(B,J)$ given $J$ is classically referred to as the \textit{quenched probability measure}. It is defined for a.e. realization $\om$ of $J$. $\Pb(F(B,J) \in \cdot)$ integrates the effect of the random environment and is classically referred to as the \textit{annealed measure}. As before, quenched and annealed measure are related via \[\Pb(F(B,J) \in \cdot)=\int \Pb(F(B,J) \in \cdot \mid J=\om)\, \Pb(J \in \dd \om).\]
We write $X$ and $X^\om$ for the solution of \eqref{WFSDE} under the annealed and quenched measures, respectively.
For $\omega$ simple, the process $X^\om$ starting at $x_0$ can be alternatively defined as follows. Denote by $t_1<\cdots<t_k$ {the consecutive jump times} of $\om$ in $[0,T]$ and set $t_0\coloneqq 0$ and $X^\om(0)\coloneqq x_0$. In the intervals $[t_i,t_{i+1})$, $X^\om$ evolves as the solution of \eqref{WFD} starting at $X^\om(t_i)$. Moreover, if $X^\om(t_i-)=x$, then $X^\om(t_i)\coloneqq x+x(1-x)\Delta\om(t_i)$; see Fig. \ref{fig:wfpath} for an illustration. 

\begin{figure}[h!]
\scalebox{0.6}{
\begin{tikzpicture}
\pgfmathsetseed{1337}
\draw[dashed, opacity=0.4] (0,-1)--(0,4);
\draw[dashed, opacity=0.4] (2,-1)--(2,4);
\draw[dashed, opacity=0.4] (8,-1)--(8,4);
\draw[dashed, opacity=0.4] (12,-1)--(12,4);
\draw[dashed, opacity=0.4] (14.5,-1)--(14.5,4);
\draw[dashed, opacity=0.4] (15,-1)--(15,4);
\draw[very thick] (0,4)--(15,4);
\draw[ultra thick,opacity=1,red] (0,-.94)--(1.97,-.94) (2.03,-.94)--(7.97,-0.94) (8.03,-.94)--(11.97,-0.94) (12.03,-.94)--(14.47,-0.94) (14.53,-0.94)--(15,-0.94);
\draw[ultra thick,opacity=1,red] (2,-1)--(2,3.38);
\draw[ultra thick,opacity=1,red] (8,-1)--(8,2.24);
\draw[ultra thick,opacity=1,red] (12,-1)--(12,3.13);
\draw[ultra thick,opacity=1,red] (14.5,-1)--(14.5,3.17);
\draw[very thick] (0,-1)--(15,-1);
\node [left] at (-0.2,4) {$1$};
\node [left] at (-0.2,-1) {$0$};
\node [left] at (-0.2,2) {$x_0$};
\node [right] at (7.2,-2) {$t$};
\node [right] at (-0.2,-1.5) {$0$};
\node [right] at (1.8,-1.5) {$t_1$};
\node [right] at (7.8,-1.5) {$t_2$};
\node [right] at (11.8,-1.5) {$t_3$};
\node [right] at (14.3,-1.5) {$t_4$};
\node [right] at (14.8,-1.5) {$T$};
 \draw[-{angle 45[scale=5]}] (6.5,-1.8) -- (8.5,-1.8) node[text=black, pos=.6, xshift=7pt]{};
\Emmett{100}{0.02}{0.2}{black}{black}{black}
\end{tikzpicture}}
	\caption{An illustration of a path of the process $X^\om$ in the interval $[0,T]$. The red lines represent the jump sizes $\Delta \om$ of the environment $\om$; $t_1$, $t_2$, $t_3$ and $t_4$ are the jump times of $\om$.}
	\label{fig:wfpath}
\end{figure} 

The next result provides the convergence of the type frequency process in the Moran model to the Wright-Fisher diffusion in random environment, as population size grows to $\infty$ and time is suitably accelerated.
{\begin{theorem}[Convergence]\label{thm2.2}
Assume that $N \sigma_N\rightarrow \sigma$ and $N\theta_N\rightarrow \theta$ as $N\to\infty$, for some $\sigma, \theta\geq 0$  (weak selection - weak mutation).
\begin{enumerate}
 \item Let $J$ be a pure-jump subordinator with L\'{e}vy measure $\mu$ in $(0,1)$, and set 
$J_N(t)\coloneqq J(t/N)$, $t\geq 0$. Define the process $(X_N(t))_{t\geq 0}$ via $X_N(t)\defeq Z_N^{J_N}(Nt)/N,$ $t\geq 0$. If $X_N(0)\xrightarrow[N\to\infty]{(d)} x_0$, then
 \[ (X_N(t))_{t\geq 0}\xRightarrow[N\to\infty]{(d)} (X(t))_{t\geq 0},\]
 where $X$ is the unique pathwise solution of \eqref{WFSDE} with $X(0)=x_0$.
 \item Let $\om\in\Db^\star$ be a simple environment and set $\om_N(t)\coloneqq\om(t/N)$, $t\geq 0$. Define the process $(X_N^\om(t))_{t\geq 0}$ via $X_N^\om(t)\defeq Z_N^{\om_N}(Nt)/N,$ $t\geq 0$. If $X_N^\om(0)\xrightarrow[N\to\infty]{(d)} x_0$, then
 \[(X_N^\om(t))_{t\geq 0}\xRightarrow[N\to\infty]{(d)} (X^\om(t))_{t\geq 0},\]
with $X^\om$ starting at $x_0$. 
\end{enumerate}
\end{theorem}}
The proof of Theorem \ref{thm2.2} is given in Section \ref{s32}. The reason for using the environment $J_N$ or $\om_N$ is to compensate that time is sped up by a factor of $N$. In this way, $X_N$ and $X$ share the same environment. 
\begin{remark}
The analogous result of Theorem \ref{thm2.2}-(1) in the context of discrete-time Wright--Fisher models without mutations is covered by the fairly general result \cite[Thm. 3.2]{BCM19} (see also \cite[Thm 2.12]{CSW19}). 
\end{remark}

\begin{remark}
If $J$ is a compound Poisson process, then almost every environment is simple. In this case, according to Theorem \ref{thm2.2}-(2) the quenched convergence holds for almost every environment (with respect to the law of $J$). We conjecture that this is true for general $J$. {In Proposition \ref{q-tight} we show that the sequence $(X_N^\om)_{N\geq 1}$ is tight} for any environment $\om$. Hence, it would suffice to prove the continuity of $\om\mapsto X^\om$ to obtain the desired convergence. Unfortunately, since the diffusion term in \eqref{WFSDE} is not Lipschitz, the standard techniques used to prove this type of result fail. Developing new techniques to cover non-Lipschitz diffusion coefficients is beyond the scope of this paper.
\end{remark}
\subsection{The ancestral selection graph in random/deterministic environment}\label{s24}
The aim of this section is to associate an ASG to the Wright--Fisher diffusion in random/deterministic environment. In contrast to the Moran model setting described in Section \ref{s21}, setting up a graphical representation for the forward process is not straightforward. To circumvent this problem, we proceed as follows. We first consider the graphical representation of a Moran model with parameters $\sigma/N$, $\theta/N$, $\nu_0$, $\nu_1$, and environment $\om_N(\cdot)=\om(\cdot/N)$, and we speed up time by $N$. Next, we sample $n$ individuals at time $T$ and we construct the ASG as in Section \ref{s21}.
\smallskip

Now, replace $\om$ by a pure-jump subordinator $J$ with L\'evy measure $\mu$ supported in $(0,1)$. Note that the Moran-ASG in $[0,T]$ evolves according to the time reversal of $J$. The latter is the subordinator $\bar{J}^T\coloneqq(\bar{J}^T(\beta))_{\beta\in[0,T]}$ with $\bar{J}^T(\beta)\coloneqq J(T)-J((T-\beta)-)$, which has the same law as $J$ (its law does not depend on $T$). A simple asymptotic analysis of the rates and probabilities for the possible events leads to the following definition.

\begin{definition}[The annealed{/quenched} ASG] \label{defannealdasg}
The \emph{annealed ancestral selection graph} $\Gs\defeq(\Gs(\beta))_{\beta\geq 0}$ with parameters $\sigma,\theta,\nu_0,\nu_1$, and environment driven by a pure-jump subordinator with L\'evy measure $\mu$, associated to a sample of size $n$ of the population at time $T$ is the branching-coalescing particle system  starting with $n$ lines and with the following dynamic.
\begin{itemize}
 \item[(i)] {Each} line independently splits at rate $\sigma$ into two, an incoming line and a continuing line.
 \item[(ii)] {Every} given pair of lines independently {coalesces} into a single one at rate $2$.
 \item[(iii)] If $m$ is the current number of lines in the ASG, every group of $k$ lines independently experiences a \emph{simultaneous branching} at rate
 {\begin{equation}\label{smk}
  \sigma_{m,k}\coloneqq \int_{(0,1)}y^k (1-y)^{m-k}\mu(\dd y),
 \end{equation}}
 i.e each line in the group splits into two lines, an incoming line and a continuing line.
 \item[(iv)] {Each} line is independently decorated by a beneficial mutation at rate $\theta \nu_0$.
 \item[(v)] {Each} line is independently decorated by a deleterious mutation at rate $\theta \nu_1$.
\end{itemize}
{Let $\om:\Rb\to\Rb$ be a fixed environment. The \emph{quenched ancestral selection graph} with parameters $\sigma,\theta,\nu_0,\nu_1$, and environment $\om$ of a sample of size $n$ at time $T$ is a branching-coalescing particle system $\Gs_T^\om\defeq(\Gs_T^\om(\beta))_{\beta\geq 0}$ starting at $\beta=0-$ with $n$ lines and evolving as the annealed ASG but with (iii) replaced by
\begin{itemize}
 \item[(iii')] If at time $\beta$, we have $\Delta \om(T-\beta)>0$, then any line splits into two lines, an incoming line and a continuing line, with probability $\Delta \om(T-\beta)$, independently from the other lines.
\end{itemize}}
 See Fig. \ref{fig:backfor} for an illustration of the type-frequency process $X^\om$ and the killed ASG $\Gs_T^\om$.
\end{definition} 
The branching-coalescing system $\Gs_T^\om$ is clearly well-defined for $\omega$ simple. The justification of the previous definition for general environments is more involved and will be given in Section \ref{s41}.
\begin{figure}[t!]
\scalebox{0.7}{
\begin{tikzpicture}
\pgfmathsetseed{1337}
\draw[dashed, opacity=0.5] (0,-1)--(0,4);
\draw[dashed, opacity=0.5] (2,-1)--(2,4);
\draw[dashed, opacity=0.5] (8,-1)--(8,4);
\draw[dashed, opacity=0.5] (12,-1)--(12,4);
\draw[dashed, opacity=0.5] (15,-1)--(15,4);
\draw[opacity=0.5] (0,4)--(15,4);
\draw[opacity=0.5] (0,-1)--(15,-1);
\node [left,opacity=0.5] at (-0.2,4) {$1$};
\node [left,opacity=0.5] at (-0.2,-1) {$0$};
\node [left, opacity=0.5] at (-0.2,2) {$x$};
\node [right, opacity=0.5] at (7.2,-2) {$t$};
\node [right, opacity=0.5] at (1.8,-1.5) {$t_0$};
\node [right, opacity=0.5] at (7.8,-1.5) {$t_1$};
\node [right, opacity=0.5] at (11.8,-1.5) {$t_2$};
\node [right, opacity=0.5] at (14.8,-1.5) {$T$};
\node [right, opacity=0.5] at (-0.2,-1.5) {$0$};
 \draw[-{angle 45[scale=5]}, opacity=0.5] (6.5,-1.8) -- (8.5,-1.8) node[text=black, pos=.6, xshift=7pt]{};
\bafo{100}{0.02}{0.2}{black}{black}{black}

\node [right, opacity=1] at (1.8,4.5) {$T-t_0$};
\node [right, opacity=1] at (7.8,4.5) {$T-t_1$};
\node [right, opacity=1] at (11.8,4.5) {$T-t_2$};
\node [right, opacity=1] at (14.8,4.5) {$0$};
\node [right, opacity=1] at (-0.2,4.5) {$T$};
\node [right, opacity=1] at (7.2,5) {$\beta$};
 \draw[-{angle 45[scale=5]}, opacity=1] (8.5,4.8) -- (6.5,4.8) node[text=black, pos=.6, xshift=7pt]{};
 
\draw[thick] (1,-0.5)--(15,-0.5);
\draw[thick] (3,0.5)--(14.5,0.5);
\draw[thick] (10,1.5)--(13,1.5);
\draw[very thick] (0,2.5)--(12,2.5);
\draw[very thick] (6,0)--(12,0);
\draw[very thick] (0,1.5)--(9,1.5);
\draw[very thick] (5,2)--(8,2);
\draw[very thick] (4,3)--(8,3);
\draw[very thick] (7,1)--(8,1);
\draw[very thick] (0,0)--(2,0);
\draw[very thick] (0,3)--(2,3);
\draw[very thick] (14.5,-0.5)--(14.5,0.5);
\draw[very thick] (13,1.5)--(13,0.5);
\draw[very thick] (12,-0.5)--(12,0);
\draw[very thick] (12,1.5)--(12,2.5);  
\draw[very thick] (9,0.5)--(9,1.5);     
\draw[very thick] (8,0.5)--(8,1);
\draw[very thick] (8,1.5)--(8,2);  
\draw[very thick] (8,2.5)--(8,3);  
\draw[very thick] (7,0.5)--(7,1);  
\draw[very thick] (5,1.5)--(5,2);  
\draw[very thick] (4,0.5)--(4,3);
\draw[very thick] (2,-0.5)--(2,0);
\draw[very thick] (2,2.5)--(2,3);      
  
\node[ultra thick] at (10,1.5) {$\bigtimes$} ;    
\node[ultra thick] at (6,0) {$\bigtimes$} ;  
\node[ultra thick] at (3,0.5) {$\bigtimes$} ; 
\node[ultra thick] at (1,-0.5) {$\bigtimes$} ;        
\end{tikzpicture}}
	\caption{Illustration of a realization of the type-frequency process $X^\om$ (grey path) and the killed ASG $\Gs_T^\om$ (black lines) embedded in the same picture. Forward time $t$ runs from left to right; backward time $\beta\coloneqq T-t$ runs from right to left. The environment $\om$ jumps at forward times $t_0$, $t_1$ and $t_2$.}
	\label{fig:backfor}
\end{figure} 
\begin{remark}
 {Note that in the Moran model, a neutral arrow appears from line $A$ to line $B$ at rate $1/N$ and from line $B$ to line $A$ at the same rate. Two lines are thus connected by a neutral arrow at rate $2/N$, which explains the rate of coalescence events in (ii).}
\end{remark}

\subsection{Type frequency via the killed ASG}\label{s25}
The aim of this section is to relate the type-$0$ frequency process $X$ to the ASG. To this end, {assume} that the proportion of fit individuals at time $0$ is equal to $x\in[0,1]$. Conditionally on $X(T)$, the probability of sampling independently $n$ unfit individuals at time $T$ equals $(1-X(T))^n$. Now, consider the annealed ASG associated to the $n$ sampled individuals in $[0,T]$ and assign randomly types independently to each line in the ASG at time $\beta=T$ according to the initial distribution $(x,1-x)$. In the absence of mutations, the $n$ sampled individuals are unfit if and only if all the lines in the ASG at time $\beta=T$ are assigned the unfit type (because at any selective event a fit individual can only be replaced by another fit individual). Therefore, if $R(T)$ denotes the number of lines present in the ASG at time $\beta=T$, then conditionally on $R(T)$, the probability that the $n$ sampled individuals are unfit is $(1-x)^{R(T)}$. We would then expect to have \[\Eb[(1-X(T))^n\mid X(0)=x]=\Eb[(1-x)^{R(T)}\mid R(0)=n].\]
Mutations determine the types of some of the lines in the ASG even before we assign types to the lines at time $\beta=T$. Hence, we can prune away from the ASG all the sub-ASGs arising from a mutation event. If in the pruned ASG there is a line ending in a beneficial mutation, we can infer that at least one of the sampled individuals is fit. If all the lines end up in a deleterious mutation, we can infer directly that all the sampled individuals are unfit. In the remaining case, the sampled individuals are all unfit if and only if all the lines present at time $\beta=T$ in the pruned ASG are assigned the unfit type. We use this idea in Section \ref{s42} to construct for a given sample of the population at time $t=T$, branching-coalescing systems $\bar{\Gs}\defeq (\bar{\Gs}(\beta))_{\beta\geq 0}$ and $\bar{\Gs}_T^\om\defeq (\bar{\Gs}_T^\om(\beta))_{\beta\geq 0}$ in the annealed and quenched setting, respectively. Both processes have a cemetery state $\dagger$. The main feature of $\bar{\Gs}$ (resp. $\bar{\Gs}^\om_T$) is that for any $\beta\geq0$, the individuals in the sample are all unfit if and only if $\bar{\Gs}\neq \dagger$ (resp. $\bar{\Gs}^\om_T\neq \dagger$) and all the lines present at time $\beta$ in $\bar{\Gs}$ (resp. $\bar{\Gs}^\om_T$) are unfit. We refer to $\bar{\Gs}$ and $\bar{\Gs}^\om_T$ as the annealed and quenched killed ASG (k-ASG), respectively. 

\subsection*{Moment dualities}
In this section we establish a duality relation between the process $X$ and the line-counting process of the k-ASG.
\smallskip

For each $\beta\geq 0$, we denote by $R(\beta)$ the number of lines present in the {annealed} k-ASG at time $\beta$, with the convention that $R(\beta)=\dagger$ if $\Gsb(\beta)=\dagger$. The process $R\coloneqq (R(\beta))_{\beta\geq 0}$, called the {\textit{annealed}} line-counting process of the k-ASG, is a continuous-time Markov chain with values on $\Nb_0^\dagger\coloneqq \mathbb{N}_0\cup\{\dagger\}$ and infinitesimal generator matrix $Q^\mu_\dagger\coloneqq (q^\mu_\dagger(i,j))_{i,j\in\Nb_0^\dagger}$ defined via
\begin{equation}\label{krates}
 q^\mu_\dagger(i,j)\coloneqq \left\{\begin{array}{ll}
            i(i-1)+i \theta\nu_1 &\text{if $j=i-1$},\\
            \binom{i}{k}(\sigma_{i,k}+\sigma 1_{\{k=1\}}) &\text{if $j=i+k,\, i\geq k\geq 1$},\\
            i\theta\nu_0&\textrm{if $j=\dagger$},\\
            -i(i-1+\theta+\sigma)-\int_{(0,1)}(1-(1-y)^i)\mu(\dd y)&\textrm{if $j=i\in\Nb_0$},
            \end{array}\right.
\end{equation}
{where the coefficients $\sigma_{m,k}$ are defined} {in Eq. \eqref{smk}. All other entries are zero}. 
\smallskip

{Similarly, for $T \in \mathbb{R}$ and {a fixed} environment $\omega\in\Db^\star$, we denote by {$R_T^\om \coloneqq (R_T^\om(\beta))_{ \beta \geq 0}$} the line-counting process associated to the quenched k-ASG $\Gsbq$. The process $R_T^\om$, {called the \textit{quenched} line-counting process of the k-ASG,} is a continuous-time (inhomogeneous) Markov process with values in $\Nb_0^\dagger$. It jumps from $i\in\Nb$ to $j\in\Nb_0^\dagger \setminus \{i\}$ at rate $q^0_\dagger(i,j)$, where $q^0_\dagger$ is the matrix defined in \eqref{krates} with $\mu=0$. In addition, at each time $\beta \geq 0$ with $\Delta\omega(T-\beta)>0$, conditionally on $\{R_T^\om(\beta-)=i\}$, $i\in\Nb$, we have $R_T^\om(\beta) \sim i +  \bindist{i}{\Delta \omega(T-\beta)}$.
If $\theta >0$ and $\nu_0 \in (0,1)$, the states $0$ and $\dagger$ are absorbing for $R$ and $R_T^\om$.}
\smallskip

Let $J$ be a pure-jump subordinator with L\'evy measure $\mu$ supported in $(0,1)$. We write here $X^J$ {instead} of $X$ to stress the dependency of the (strong) solution of \eqref{WFSDE} on the environment $J$. Similarly, we write 
$X^\omega$ for its quenched version (as introduced in Section \ref{s23}). Since in the annealed case, backward and forward environments have the same law, we can construct the line-counting process of the k-ASG as the strong solution of an SDE involving $J$ {and} other four independent Poisson processes encoding the non-environmental events. We denote it by $(R^J(\beta))_{\beta\geq 0}$. The next result establishes a formal relation between $X^J$ and $R^J$: a \emph{reinforced moment duality}, which allows us to derive moment dualities in the annealed and quenched setting (see Fig. \ref{fig:backfor} to visualize forward and backward processes in the same picture).
\begin{theorem}[{Reinforced, annealed and quenched moment dualities}] \label{thm2.3}
For all $x\in[0,1]$, $n\in\mathbb{N}$ and $T\geq 0$, and any function $f\in\Cs^2([0,\infty))$ with compact support,
\begin{equation}\label{rmd}
\mathbb{E}[(1-X^J(T))^n f(J(T))\mid X^J(0)=x]=\mathbb{E}[(1-x)^{R^J(T)} f(J(T))\mid R^J(0)=n],
 \end{equation}
with the convention $(1-x)^\dagger=0$ for all $x\in[0,1]$. In particular, if $f=1$ we recover the moment duality\footnote{We will often drop the superscript $J$ when using this relation, unless we want to emphasize the dependency on $J$.}
 \begin{equation}\label{md}
\mathbb{E}[(1-X^J(T))^n \mid X^J(0)=x]=\mathbb{E}[(1-x)^{R^J(T)} \mid R^J(0)=n].
 \end{equation}
For almost every (with respect to the law of $J$) environment $\om\in\Db^\star$, 
\begin{equation}
\mathbb{E} \left [ (1-X^\omega(T))^n\mid X^\omega(0)=x \right ]= \mathbb{E} \left [ (1-x)^{R_T^\omega(T-)}\mid R_T^\omega(0-)=n \right ].  \label{quenchedual}
\end{equation}
\end{theorem}
We prove \eqref{rmd} and \eqref{md} in Section \ref{s51}. The proof of the quenched duality \eqref{quenchedual} is given in Section \ref{s61}. Moreover, Theorem \ref{thmf1} extends  \eqref{quenchedual} to any simple environments. 
\begin{remark}
For $\theta=0$, \eqref{md} is a particular case of \cite[Lemma 2.14]{CSW19}. 
\end{remark}
\subsection*{Asymptotic type composition}
\begin{figure}[t!]
\scalebox{0.65}{
\begin{tikzpicture}
\pgfmathsetseed{1337}
\draw[dashed, opacity=0.4] (5,-1)--(5,4);
\draw[dashed, opacity=0.4] (15,-1)--(15,4);
\draw[dashed, opacity=0.4] (0,-1)--(0,4);
\draw[very thick] (0,4)--(15,4);
\draw[opacity=0.4] (0,2)--(15,2);
\draw[ultra thick,opacity=1,red] (0,-.94)--(1.97,-.94) (2.03,-.94)--(7.97,-0.94) (8.03,-.94)--(11.97,-0.94) (12.03,-.94)--(15,-0.94) ;
\draw[ultra thick,opacity=1,red] (2,-1)--(2,3.38);
\draw[ultra thick,opacity=1,red] (8,-1)--(8,2.24);
\draw[ultra thick,opacity=1,red] (12,-1)--(12,3.13);
\draw[very thick] (0,-1)--(15,-1);
\node [left] at (-0.2,4) {$1$};
\node [left] at (-0.2,-1) {$0$};
\node [right] at (-0.9,-1.5) {$-(\tau+h)$};
\node [left] at (-0.2,2) {$x$};
\node [right] at (7.2,-2) {$t$};
\node [right] at (4.5,-1.5) {$-\tau$};
\node [right] at (14.8,-1.5) {$0$};
\draw[-{angle 45[scale=5]}] (6.5,-1.8) -- (8.5,-1.8) node[text=black, pos=.6, xshift=7pt]{};
\frompast{100}{0.02}{0.2}{black}{black}{black}
\end{tikzpicture}}
	\caption{The black (resp. grey) path represents a realization of $X^\om$ in the interval $[-(\tau+h),0]$ (resp. $[-\tau,0]$ ) starting at $X^\om(-(\tau+h))=x$ (resp. $X^\om(-\tau)=x$). The jump sizes of the environment $\om$ are depicted in red.}
	\label{fig:wfpast}
\end{figure} 

Assume now that $\theta >0$ and $\nu_0, \nu_1 \in (0,1)$. In particular, the processes $X$ and $X^\omega$ are not absorbed in $\{0,1\}$. We will describe the asymptotic behavior of these processes using Theorem \ref{thm2.3}. The quenched case is particularly delicate, because for a given environment $\om$, $X^\omega(t)$ strongly depends on the environment in the recent past, and only weakly on the environment in the distant past (see Fig. \ref{fig:wfpath}). Hence, unless $\om$ is constant after some fixed time $t_0$ (i.e $\om$ has no jumps after $t_0$), $X^\omega(t)$ will not converge as $t\to\infty$ (see Remark \ref{periodic} for the case of periodic environments). In contrast, for a given environment $\om$ in $(-\infty,0]$, we will see that $X^\om(0)$, conditionally on $X^\om(-\tau)=x$, converges in distribution as $\tau\to\infty$, and we will characterize its law; the setting is illustrated in Fig. \ref{fig:wfpast} (compare with Fig. \ref{fig:wfpath}). To this end, define for $n\in\Nb_0$,
\begin{align}
\pi_n\coloneqq \mathbb{P}(\exists \beta\geq 0: R(\beta)=0 \mid R(0)=n),\quad
\Pi_n(\omega)\coloneqq \mathbb{P}(\exists \beta\geq 0 : R_0^\omega(\beta)=0 \mid R_{0}^\omega(0-)=n), \label{defpin}
\end{align}
and set $\pi_\dagger\coloneqq 0$ and $\Pi_\dagger(\om)\coloneqq 0$. Clearly, $\pi_0=1$ and $\Pi_0(\omega)=1$. 
\begin{theorem}[Asymptotic type frequency] \label{thm2.4}
Assume that $\theta >0$ and $\nu_0, \nu_1 \in (0,1)$. 
\begin{enumerate}
 \item The diffusion $X$ has a unique stationary distribution $\eta_X\in\Ms_1([0,1])$ and {$X(t)\xrightarrow[]{(d)}X(\infty)$} as $t\to\infty$, where $X(\infty)$ denotes a random variable distributed according to $\eta_X$. Moreover, for all $n\in\mathbb{N}$,
\begin{equation}
\mathbb{E}\left[ (1-X(\infty))^n \right]=\pi_n, \label{cvmomentsannealed}
\end{equation}
and the absorption probabilities $(\pi_n)_{n\geq 0}$ satisfy 
\begin{equation}
(\sigma+\theta+n-1)\pi_n=\sigma \pi_{n+1}+(\theta\nu_1+ n-1)\pi_{n-1}+ \frac{1}{n}\sum\limits_{k=1}^n\binom{n}{k} \sigma_{n,k}(\pi_{n+k}-\pi_n),\quad n\in\Nb, \label{recwn}
\end{equation}
where the coefficients $\sigma_{n,k}$, $k\in[n]$, $n\in\Nb$, are defined in Eq. \eqref{smk}.
 \item For almost every (with respect to the law of $J$) environment $\om$ and for any $x \in (0,1)$, the distribution of $X^\omega(0)$ conditionally on $\{ X^\omega(-\tau) = x \}$ has a limit distribution $\Ls^\omega$ as $\tau\to\infty$, which does not depend on $x$. Moreover,
\begin{eqnarray}
\ \int_0^1 (1-y)^n \mathcal{L}^{\omega}(\dd y) = \Pi_n(\omega),\quad n\in\Nb, \label{dualimite}
\end{eqnarray}
and the convergence of moments is exponential, i.e. 
\begin{eqnarray} 
 \ \left | \mathbb{E}\left [ (1-X^\omega(0))^n \mid X^\omega(-\tau)=x \right ] - \Pi_n(\omega) \right | \leq e^{-\theta\nu_0 \tau},\quad n\in\Nb. \label{approxwn}
\end{eqnarray}
\end{enumerate}

\end{theorem}
The setting of Theorem \ref{thm2.4}-(1) is illustrated in Fig. \ref{fig:wfpath} and its proof is given in Section \ref{s51}; the setting of part (2) is illustrated in Fig. \ref{fig:wfpast} and its proof is provided in Section \ref{s61}. Moreover, {Theorem \ref{thmf2}} extends {Theorem} \ref{thm2.4}-(2) to any simple environment. {A refinement of Theorem \ref{thm2.4}-(2) is given in Theorem \ref{thmf3} for simple environments under additional conditions.}

\begin{remark}\label{simpsonindexannealed}
\emph{Simpson's index} is a popular tool for describing population diversity. It represents the probability that two individuals chosen uniformly at random, from the population will be of the same type. In our case it is given by $\Sim(t) \coloneqq  X(t)^2 + (1-X(t))^2$. If the types represent different species, it gives a measure of bio-diversity. If the types represent two alleles of a gene for a given species, it measures \emph{homozygosity}. As a consequence of Theorem \ref{thm2.4}, one can express the moments of $\Sim(\infty)$ {in terms} of the coefficients $(\pi_n)_{n\geq 0}$. In particular, we have 
\[ \mathbb{E}[\Sim(\infty)]=\mathbb{E}[X(\infty)^2 + (1-X(\infty))^2] = 1 - 2 \pi_1 + 2\pi_2. \]
\end{remark}
\begin{remark}\label{periodic}
If $\omega$ is a periodic environment on $[0,\infty)$ with period $T_p > 0$, the proof of Theorem \ref{thm2.4}-(2) yields that, for any $x \in (0,1)$ and $r \in [0,T_p)$, the distribution of $X^\omega(nT_p + r)$, conditionally on $\{ X^\omega(0) = x \}$, has a limit distribution $\mathcal{L}_r^{\omega}$, when $n$ goes to infinity, which is a function of $\om$ and $r$, and does not depend on $x$. Furthermore, $\mathcal{L}_r^{\omega}$ satisfies $\int_0^1 (1-y)^n \mathcal{L}_r^{\omega}(\dd y) = \Pi_n(\omega_r)$, where $\omega_r$ is the periodic environment in $(-\infty,0]$ defined by $\omega_r(t) \coloneqq  \omega(r+t+(\lfloor -t/T_p \rfloor + 1) T_p)$ for any $t \in (-\infty,0]$. The convergence of moments is exponential as in \eqref{approxwn}. 
\end{remark}

\subsection*{Mixed environments} 
\begin{figure}[t!]
\scalebox{0.7}{
\begin{tikzpicture}
\pgfmathsetseed{1337}
\draw[dashed, opacity=0.4] (0,0)--(0,6);
\draw[thick, opacity=1] (9,0)--(9,6);
\draw[dashed, opacity=0.4] (15,0)--(15,6);

\draw[ultra thick, opacity=1,red] (12,0)--(12,4.8);
\draw[ultra thick, opacity=1,red] (10,0)--(10,3);
\draw[ultra thick, opacity=0.4,red] (2,0)--(2,4.5);
\draw[ultra thick, dashed, opacity=0.4,red] (1,0)--(1,4.8);
\draw[ultra thick, dashed, opacity=0.4,red] (3,0)--(3,3.6);
\draw[ultra thick, dashed, opacity=0.4,red] (6,0)--(6,5.4);

\draw[ultra thick, opacity=0.4,red] (0,0.02)--(0.97,0.02) (1.03,0.02)--(2.97,0.02) (2.03,0.02)--(5.97,0.02) (6.03,0.02)--(8.99,0.02);
\draw[ultra thick, opacity=1,red] (9.01,0.02)--(9.97,0.02) (10.03,0.02)--(11.97,0.02) (12.03,0.02)--(15,0.02);
\draw[] (0,6)--(15,6);
\draw[] (0,0)--(15,0);
\node [left] at (-0.2,6) {$1$};
\node [left] at (-0.2,0) {$0$};
\node [left] at (-0.2,3) {$x$};
\node [right] at (8.5,-0.5) {$-{\tau_\star^{}}$};
\node [right] at (14.8,-0.5) {$0$};
\node [right] at (-.4,-0.5) {$-\tau$};
\various{100}{0.02}{0.2}{0.4}{1}{black}
\pgfmathsetseed{1336}
\variousb{50}{0.02}{0.2}{0.4}{1}{blue}
\end{tikzpicture}}
\caption{Two realizations (blue and black) of the type-frequency process $X^{J\otimes_{\tau_\star^{}}^{}\om}$ in $[-\tau,0]$ starting at $x$. Both realizations share the same deterministic environment $\om$ in $[-{\tau_\star^{}},0]$; the solid red lines in $(-{\tau_\star^{}},0]$ represent the jump sizes of $\om$. The corresponding environments in $[-\tau,-\tau_\star^{}]$ are random; the jump sizes $\Delta J$ associated to the black (resp. blue) realization in $[-\tau,-{\tau_\star^{}})$ are depicted by solid (resp. dashed) red lines.}
\label{fig:mixed}
\end{figure}

We now present an application illustrating the advantage of studying both quenched and annealed {settings}. We consider a population evolving from the distant past in a (stationary) random environment {and analyze} the effect in the type composition at present of a recent perturbation of the environment. To this end, we assume we only know the distribution of the environment before the perturbation. 
\smallskip

In the absence of perturbations, the environment is given by a pure jump subordinator $J$ in $(-\infty,0]$ with L\'evy measure $\mu$ satisfying \eqref{intmu}. The perturbation occurs in $(-\tau_\star^{},0]$ (for some $\tau_\star^{} > 0$) and is given by a deterministic environment $\om$. Let $X^{J\otimes_{\tau_\star^{}}^{} \om}$ be the solution of \eqref{WFSDE} under the environment $J\otimes_{\tau_\star^{}}^{} \om$, which coincides with $J$ and $\om$ in $(-\infty,-{\tau_\star^{}}]$ and $(-{\tau_\star^{}},0]$, respectively; see Fig~\ref{fig:mixed} for an illustration. Recall that $ (R_0^\om(\beta))_{\beta \in [0,{\tau_\star^{}})}$ is the line-counting process associated to the quenched k-ASG (see Section \ref{s25}). We are interested in the distribution of $X^{J\otimes_{\tau_\star^{}}^{} \om}(0)$. The next result provides the moments of this random variable.
\begin{proposition}\label{prop2.5}
Assume that $\theta>0$ and $\nu_0,\nu_1\in(0,1)$. For any $\tau_\star^{}>0$, $n\in\Nb$, $x\in[0,1]$, and almost every (with respect to the law of $J$) $\om\in\Db^\star$, we have
\begin{equation}\label{mix1}
\lim_{\tau\to\infty}\Eb\left[(1-X^{J\otimes_{\tau_\star^{}}^{} \om}(0))^n\mid X^{J\otimes_{\tau_\star^{}}^{} \om}(-\tau)=x\right]=\E\left[\pi_{R_0^{\om}({\tau_\star^{}}-)}\mid R_0^\om(0-)=n\right].
\end{equation}
\end{proposition}
Proposition \ref{prop2.5} is proved in Section \ref{s61}; a refinement of this result is given for simple environments under the additional condition $\sigma=0$ in Proposition  \ref{mixf4}.

\subsection{Ancestral type via the pruned lookdown ASG}\label{s26}
In this section we are interested in the type distribution at present of the individuals that will be successful in the long run. This distribution may substantially differ from the type composition at present and show a bias towards the fit type. 
\smallskip

Consider a sample of $n$ individuals at some time $T$ in the future and trace their ancestral lines using the ASG. We will see in Section \ref{s52} that the number of lines in the ASG is positive recurrent (see Lemma \ref{pr}). Hence, the ASG has bottlenecks and, if $T$ is sufficiently large, the $n$ individuals share a common ancestor at time $0$. Assigning types to the lines in the ASG at time $0$ and propagating the types along using the pecking order, we determine the types in the sample as well as the true genealogy. In particular, we obtain the type of the common ancestor of the sample. What it means for $T$ to be sufficiently large depends on $n$ and on the realization of the ASG, but this dependency vanishes as $T\to \infty$. Because we are interested in the type of the individual that is successful in the long run, we can work under this limit consideration. In what follows we formalize this idea.
\smallskip
 
Consider a realization $\Gs_{[0,T]}^{}\coloneqq (\Gs(\beta))_{\beta\in[0,T]}$ of the annealed ASG in $[0,T]$ started with one line, representing an individual sampled at forward time $T$. If $t$ denotes forward time, we {set} $\beta=T-t$ to denote the backward time (see Fig. \ref{fig:backfor}). For $\beta\in[0,T]$, let $V_\beta$ be the set of lines present at time $\beta$ in $\Gs_{[0,T]}^{}$. Consider a function $c:V_T\to\{0,1\}$ representing an assignment of types to the lines in $V_T$. Given $\Gs_{[0,T]}^{}$ and $c$, we propagate types (forward in time) along the lines of $\Gs_{[0,T]}$ keeping track, at any time $\beta\in[0,T]$, of the true ancestor in $V_T$ of each line in $V_\beta$. We denote by $a_c(\Gs_{[0,T]}^{})$ the type of the ancestor in $V_T$ of the single line in $V_0$. Assume that, under $\Pb_x$, $c$ assigns independently to each line type $0$ with probability $x$ and type $1$ with probability $1-x$. {The \emph{annealed ancestral type distribution at time $T$} is 
\[h_T(x)\coloneqq \Pb_x(a_c(\Gs_{[0,T]})=0),\quad x\in[0,1].\]} 
{In the quenched setting, we proceed in the same way, but using $\Gs_{[0,T]}^\om\coloneqq (\Gs_T^\om(\beta))_{\beta\in[0,T]}$, the quenched ASG in $[0,T]$ in the environment $\om$ of an individual sampled at time $T$, instead of $\Gs_{[0,T]}^{}$. The \emph{quenched ancestral type distribution at time $T$} is 
\[h_T^\om(x)\coloneqq \Pb_x(a_c(\Gs_{[0,T]}^\om)=0),\quad x\in[0,1],\]
where under $\Pb_x$, $c$ assigns independently to each line present in $\Gs_{[0,T]}^\om$ at time $\beta=T$ type $0$ with probability $x$ and type $1$ with probability $1-x$.}
 \smallskip

{In the absence of mutations, the ancestor of the sampled individual is fit if and only if there is at least one fit line in the ASG having type $0$ at time $\beta=T$. In the presence of mutations, determining the type of the ancestor is more involved. In \cite{LKBW15} the ancestral type distribution was obtained for the null environment using the line-counting process of a pruned version of the ASG, called \emph{the pruned lookdown ASG} (pLD-ASG). In Section \ref{s43} we 
generalize this construction to incorporate the effect of the environment. The main feature of the pLD-ASG is that the type of the ancestor at time $t=0$ of the sampled individual at time $t=T$ is $0$ if and only if there is at least one line in the pLD-ASG at time $\beta=T$ that has type $0$ (see Lemma \ref{pruningdonnehtx}). Hence,  $h_T(x)$ and $h_T^\om(x)$ can be represented via the corresponding line-counting processes (which can be easily inferred from the description of the pLD-ASG given in Section \ref{s52}).}
\smallskip

The line-counting process of the annealed pLD-ASG, denoted by $L\coloneqq (L(\beta))_{\beta \geq 0}$, is a continuous-time Markov chain with values on $\mathbb{N}$ and generator matrix $Q^\mu\coloneqq (q^\mu(i,j))_{i,j\in\Nb}$ given by
\begin{equation}\label{kratespldasg}
q^\mu(i,j)\coloneqq \left\{\begin{array}{ll}
            i(i-1)+(i-1) \theta\nu_1 + \theta\nu_0 &\text{if $j=i-1$},\\
            i(\sigma + \sigma_{i,1}) &\text{if $j=i+1$},\\
            \binom{i}{k}\sigma_{i,k} &\text{if $j=i+k,\, i\geq k\geq 2$},\\
            \theta\nu_0&\textrm{if $1 \leq j < i-1$},\\
            -(i-1)(i+\theta)-i\sigma-\int_{(0,1)}(1-(1-y)^i)\mu(\dd y)& \textrm{if $j=i$}.\\
            \end{array}\right.
\end{equation}
where $\sigma_{m,k}$  is defined in \eqref{smk}; all other entries are $0$. 
\smallskip

The pLD-ASG associated to $\om\in\Db^\star$ is well-defined and contains almost surely finitely many lines at any time; we show this in Section \ref{s52}. The corresponding line-counting process $(L_T^\om(\beta))_{ \beta \geq 0}$ started at time $T$, is a continuous-time (inhomogeneous) Markov process with values in $\mathbb{N}$. It jumps from $i\in\Nb$ to $j\in\Nb \setminus \{i\}$ at rate $q^0(i,j)$, where $q^0$ is the matrix defined in \eqref{kratespldasg} with $\mu=0$, and in addition, at each time $\beta \geq 0$ such that $\Delta\omega(T-\beta)>0$, conditionally on $\{L_T^\omega(\beta-)=i\}$, $i\in\Nb$, we have $L_T^\omega(\beta) \sim i+\bindist{i}{\Delta \omega(T-\beta)}$. 
\smallskip

Now, we state the main result of this section describing the asymptotic behavior of $h_T(x)$ and $h_T^\om(x)$.
\begin{theorem}[Ancestral type distribution]\label{thm2.6} The following assertions hold:
 \begin{enumerate}
  \item The process $L$ admits a unique stationary distribution $\eta_L$. Moreover, if $L(\infty)$ is a random variable distributed {according to} $\eta_L$, then $L(T)\xrightarrow[]{(d)}L(\infty)$ as $T\to\infty$. In particular, $h(x)\coloneqq \lim_{T\to\infty} h_T(x)$ is well-defined, and
\begin{eqnarray}
h(x)  = \sum_{n \geq 0} x(1-x)^{n} a_n, \label{represh(x)tailpldasg}
\end{eqnarray}
where the coefficients $a_n\defeq \mathbb{P}(L(\infty) > n)$, $n\in\Nb_0$, satisfy the following recursion\footnote{ in the case $\mu=0$, the recursion is known as Fernhead's recursion.}
 \begin{equation}\label{fr}
(\sigma +\theta +n+1 )\, a_n=  \sigma\, a_{n-1}+(\theta\nu_1+n+1)\,a_{n+1} + \frac{1}{n}\sum\limits_{j=1}^{n} \gamma_{n+1,j}\, (a_{j-1}-a_{j}),\quad n\in\Nb,
\end{equation}
where $\gamma_{i,j}\coloneqq \sum_{k=i-j}^{j}\binom{j}{k}\sigma_{j,k}$ {if $1 \leq j<i\leq 2j$ and $\gamma_{i,j}\coloneqq 0$ otherwise.}
\item Assume that $\theta\nu_0 > 0$. For any $n \in \Nb$, the distribution of $L_T^\om(T-)$ conditionally on $\{ L_T^\omega(0-) = n \}$ has a limit distribution $\mu^\om\in\Ms_1(\Nb)$ as $T\to\infty$,  which does not depend on $n$. In particular, $h^{\omega}(x)\coloneqq \lim_{T\to\infty}h_T^\om(x)$ is well-defined and 
\begin{equation}\label{pldasgatdtpsinftyq}
h^{\omega}(x)= 1- \sum_{n=1}^\infty \mu^\om(\{n\})(1-x)^{n}.
\end{equation}
Moreover, for any $x \in [0,1]$ and $t > 0$, 
\begin{eqnarray}
\left | h^{\omega}(x) - h^{\omega}_T(x) \right | \leq 2e^{-\theta\nu_0 T}. \label{approxh(x)4}
\end{eqnarray}
 \end{enumerate}

\end{theorem}
The proof of Theorem \ref{thm2.6}-(1) is given in Section \ref{s52}; part (2) is proved in Section \ref{s62}. 
Theorem \ref{thmfa} extends part (2) to the case $\theta\nu_0=0$ for simple environments under additional conditions. A refinement of Theorem \ref{thm2.6}-(2) is given in Theorem \ref{thmf4} for simple environments under additional conditions.
\smallskip

In the case $\theta=0$, Theorem \ref{thm2.6} yields the following result about the boundary behavior of $X$.
\begin{corollary}[Accessibility of the boundaries]\label{cor2.7}
If $\theta=0$, then for any $T>0$ and $x\in[0,1]$, 
\[h_T(x)=\mathbb{E} [X(T) \mid X(0) = x].\]
Moreover, {conditionally} on $\{X(0)=x\}$, $X(T)$ converges almost surely as $T\to\infty$ to a Bernoulli random variable with parameter $h(x)$. In particular, the absorbing states $0$ and $1$ are both accessible from any $x\in(0,1)$. 
\end{corollary}
\begin{remark}
Corollary \ref{cor2.7} is not a direct consequence of \cite[Thm. 3.2]{CSW19}, whose statement does not cover SDEs with a diffusion term {(the term $\sqrt{2X(t)(1-X(t))}\dd B(t)$)}.   
\end{remark}
We close this section with an application of our results to the comparison of the (isolated) effects of the environment and of (genic) selection. To this end, we fix a non-zero measure $\mu$ on $(0,1)$ satisfying \eqref{intmu} and we consider two models, both without mutations. The first model has selection parameter 
\begin{equation}\label{shape}
\sigma=\sigma_\mu\coloneqq \int_{(0,1)}y\mu(\dd y),
\end{equation}
and no environment (i.e. in \eqref{WFSDE} we take $S(t)\coloneqq \sigma_\mu t$). The second one has selection parameter $\sigma=0$ and an environment given by a subordinator with L\'evy measure $\mu$ (i.e. in \eqref{WFSDE} we take $S(t)\coloneqq J(t)$). We will use the superscript "sel" (resp. "env") to refer to the first (resp. second) model. 

For $n\in\Nb$, set $\rho_n^{\rm{env}}\coloneqq \Pb(L^{\rm{env}}(\infty)=n)$ and $\rho_n^{\rm{sel}}\coloneqq \Pb(L^{\rm{sel}}(\infty)=n)$. Consider the probability generating functions 
\[p^{\rm{env}}(z)\coloneqq \sum_{n=1}^\infty \rho_n^{\rm{env}}z^{n}\quad\textrm{and}\quad p^{\rm{sel}}(z)\coloneqq \sum_{n=1}^\infty \rho_n^{\rm{sel}}z^{n},\quad z\in[0,1].\]
Note that $p^{\rm{env}}(z)=1-h^{\rm{env}}(1-z)$ and $p^{\rm{sel}}(z)=1-h^{\rm{sel}}(1-z)$.
\begin{proposition}\label{comparison}
For any non-zero measure $\mu$ on $(0,1)$ satisfying \eqref{intmu} we have
\[\rho_1^{\rm{env}}>\rho_1^{\rm{sel}}=\frac{\sigma_\mu}{e^{\sigma_\mu}-1}\quad\textrm{and}\quad p^{\rm{env}}(z)\leq \frac{\rho_1^{\rm{env}}}{\rho_1^{\rm{sel}}}\, p^{\rm{sel}}(z)=\rho_1^{\rm{env}}\,\left(\frac{e^{\sigma_\mu z}-1}{\sigma_\mu}\right),\quad z\in[0,1].\]
In particular, there is $x_c\in(0,1)$ such that, for $x\in[x_c,1)$,
\[h^{\rm{env}}(x)=\Pb\left(\lim_{t\to\infty}X^{\rm{env}}(t)=1 \mid X^{\rm{env}}(0)=x\right)<\Pb\left(\lim_{t\to\infty}X^{\rm{sel}}(t)=1 \mid X^{\rm{sel}}(0)=x\right)=h^{\rm{sel}}(x).\]
\end{proposition}
\begin{remark}
As a consequence of Proposition~\ref{comparison} one recovers the classical result of Kimura \cite{Ki62}
\[h^{\rm{sel}}(x)=\frac{1-e^{-\sigma_\mu z}}{1-e^{-\sigma_\mu}},\quad x\in[0,1].\] 
\end{remark}
\begin{remark}
Consider a Wright--Fisher diffusion with no mutations and selection parameter $\sigma$, evolving in an environment with L\'evy measure $\mu$. The quantity $\sigma_\mu$ in \eqref{shape} corresponds to the quantity $\alpha_{\mathfrak{s}}$ in \cite{CSW19}. As shown there, $\alpha_{\mathfrak{s}}$ is not sufficient to fully describe the strength of the environment; one also needs to know the \emph{shape of rare selection}, which is defined as $\alpha^*\coloneqq\int_{(0,1)}\log(1+y)\mu(\dd y)/\alpha_{\mathfrak{s}}$. The joint action of weak selection and the environment is then described by the quantity $\alpha_{\rm{eff}}\coloneqq \sigma+\alpha_{\mathfrak{s}}\alpha^*$, which is called the \emph{effective strength of selection}. The main result in \cite{CSW19} establishes that both boundaries are accessible if and only if $\alpha_{\rm{eff}}$ is smaller than a quantity $\beta^*$ coding for neutral reproductions ($\beta^*=\infty$ in our case). 
\end{remark}
The proof of Corollary \ref{cor2.7} and Proposition~\ref{comparison} are given in Section \ref{s52}.
\section{Moran models and Wright--Fisher processes}\label{S3}
This section is devoted to the proofs of Theorem \ref{thm2.1} and Theorem \ref{thm2.2} and other related results. 
\subsection{Results related to Section \ref{s21}:  continuity with respect to the environment}\label{s31}
\subsection*{Graphical representation}
We start by making more precise the description of the graphical representation of the Moran model as an IPS. This will allow us to decouple the randomness of the model coming from the initial type configuration, the one coming from mutations and reproductions, and the one coming from the environment. Non-environmental events are as usual encoded via a family of independent Poisson processes
\[\Lambda\coloneqq \{\lambda_{i}^{0},\lambda_{i}^{1},\{\lambda_{i,j}^{\vartriangle},\lambda_{i,j}^{\blacktriangle}\}_{j\in{[N] \setminus \{i\}}} \}_{i\in[N]},\]
where: (a) for each $i,j\in [N]$ with $i\neq j$, $(\lambda_{i,j}^{\vartriangle}(t))_{t\in\Rb}$ and $(\lambda_{i,j}^{\blacktriangle}(t))_{t\in\Rb}$ are Poisson processes with rates $\sigma_N/N$ and $1/N$, respectively, and (b) for each $i\in [N]$, $(\lambda_{i}^{0}(t))_{t\in\Rb}$ and $(\lambda_{i}^{1}(t))_{t\in\Rb}$ are Poisson processes with rates $\theta_N\nu_0$ and $\theta_N\nu_1$, respectively. We call $\Lambda$ the \textit{basic background}. The environment introduces a new independent source of randomness into the model, that we describe via the collection
\[\Sigma\coloneqq  \{(U_i(t))_{i\in[N], t\in\Rb}, (\tau_A^{}(t,\cdot))_{A\subset[N], t\in\Rb}\},\]
where: (c) $(U_i(t))_{i\in[N], t\in\Rb}$ is a $[N] \times \Rb-$indexed family of i.i.d. random variables with $U_i(t)$ being uniformly distributed on $[0,1]$, and (d) $(\tau_A(t,\cdot))_{A\subset[N], t\in\Rb}$ is a family of independent random variables with $\tau_A(t,\cdot)$ being uniformly distributed on the set of injections from $A$ to $[N]$. We call $\Sigma $ the \textit{environmental background}.  We assume that basic and environmental backgrounds are independent and we call $(\Lambda,\Sigma)$ the \textit{background}.
\smallskip

Recall that in the graphical representation individuals are represented by horizontal lines at levels $i\in [N]$ {(see Fig. \ref{particlepicture})}. The random appearance of selective and neutral arrows, circles and crosses is prescribed by the background as follows. At the arrival times of $\lambda_{i,j}^{\vartriangle}$ (resp. $\lambda_{i,j}^{\blacktriangle}$), we draw open (resp. filled) head arrows from level $i$ to level $j$. At the arrival times of $\lambda_{i}^{0}$ (resp. $\lambda_{i}^{1})$, we draw an open circle (resp. a cross) at level $i$. Now, given an environment $\zeta\coloneqq (t_k, p_k)_{k \in I}$ satisfying \eqref{summableassumption}, we define, for each $k\in I$,
\[I_{\zeta}(k)\coloneqq \{i\in[N]:U_i(t_k)\leq p_k\}\quad\textrm{and}\quad n_{\zeta}(k)\coloneqq |I_{\zeta}(k)|,\]
and we draw, at time $t_k$, for each $i\in I_{\zeta}(k)$ an open head arrow from level $i$ to level $\tau_{I_{\zeta}(k)}^{}(t,i)$. 
\subsection*{Continuity with respect to the environment}
Now, we embark on the proof of Theorem \ref{thm2.1}, which states the continuity of the type composition in a Moran model with respect to the environment. The paths of the fit-counting process are considered as elements of $\Db_{0,T}$, which is endowed with the $J_1$-Skorokhod topology, i.e. the topology induced by the Billingsley {metric} $d_T^0$ defined in {\eqref{defdt0}}. Recall also that the restriction of an environment to $[0,T]$ is described by means of a function in $\Db_T^\star$ (see \eqref{dbs}), which is endowed with the topology induced by the metric $d_T^\star$ defined in {\eqref{defdtstar}}.
\smallskip

Let us denote by $\mu_N(\om)$ the law of $(Z_N^\om(t))_{t\in[0,T]}$ (recall that $Z_N^\om(t)$ is the number of fit individuals at time $t$ in a Moran population of size $N$ subject to environment $\om$). Theorem \ref{thm2.1} states the continuity of the mapping $\om\mapsto \mu_N(\om)$, where the set of probability measures on $\Db_{0,T}$ is equipped with the topology of weak convergence of measures. We will use that the topology of weak convergence of probability measures on a complete and separable metric space $(E,d)$ is induced by the metric $\varrho_E$ {defined in \eqref{defblm}.} 
\smallskip

First, we get rid of the small jumps of the environment. To this end, we introduce the following notation. For $\delta>0$ and $\om\in\Db_T^\star$, we define $\om^\delta, \om_\delta\in \Db_T^\star$ via
\begin{align}
\om^\delta(t)\coloneqq \sum_{u\in[0,t]:\Delta \om(u)\geq \delta} \Delta \om(u)\quad\textrm{and}\quad \om_\delta(t)\coloneqq \sum_{u\in[0,t]:\Delta \om(u)< \delta} \Delta \om(u).\label{defomdelta}
\end{align}
Clearly, $\om^\delta$ is simple and $\om=\om^\delta+\om_\delta$. Moreover, $\om_\delta \to 0$ pointwise as $\delta\to 0$, and hence for any $t\in[0,T]$, 
\[d_t^\star(\om,\om^\delta)\leq \sum_{u\in[0,T]}\lvert \Delta \om(u)-\Delta \om^\delta(u)\rvert= \om_\delta(T) \xrightarrow[\delta\to 0]{}0.\]
In addition, for $\om\in\Db_T^\star$, $n\in\Nb$ and $\vec{r}\coloneqq (r_i)_{i\in[n]}\in[0,T]^n$, we denote by $\mu_N^{\vec{r}}(\om)$ the law of $(Z_N^{\om}(r_i))_{i\in[n]}$, where $[0,N]^n$ is equipped with the distance $d_1$ defined via \begin{equation}\label{defd1} 
d_1( (x_i)_{i\in[n]}, (y_i)_{i\in[n]}) \coloneqq \sum_{i\in[n]}|x_i - y_i|.
\end{equation}
\begin{proposition}\label{sjumps}
Let $\om\in\Db_T^\star$. Assume that, for any $\delta>0$, we have $Z_N^{\om^\delta}(0)=Z_N^{\om}(0)$, then
\begin{equation}\label{onedimbound}
\varrho_{[0,T]^n}^{}(\mu_N^{\vec{r}}(\om^\delta),\mu_N^{\vec{r}}(\om))\leq nN\,\om_\delta(r_*)e^{\sigma_N r_*+ \om(r_*)}, \ \forall \vec{r}\in[0,T]^n, n\in\Nb,
\end{equation}
where $r_*\coloneqq \max_{i\in[n]}r_i$. Moreover, 
\begin{equation}\label{unibound}
\varrho_{\Db_{0,T}}^{}(\mu_N(\om^\delta),\mu_N(\om))\leq N\om_\delta(T)e^{(1+\sigma_N) T+\om(T)}.
\end{equation}
In particular,
\[(Z_N^{\om^\delta}(t))_{t\in[0,T]}\xrightarrow[\delta\to 0]{(d)}(Z_N^{\om}( t))_{t\in[0,T]}.\]
\end{proposition}
\begin{proof}
For $\delta>0$, we couple in $[0,T]$ a Moran model with parameters $(\sigma_N,\theta_N,\nu_0,\nu_1)$ and environment $\om$ to a Moran model with parameters $(\sigma_N,\theta_N,\nu_0,\nu_1)$ and environment $\om^\delta$ (both of size $N$) by using: (1) the same initial type configuration, (2) the same basic background, and (3) the same environmental background. 
\smallskip

{For any $t\in[0,T]$ and $a,b\in\{0,1\}$, we denote by $Y_N^{a,b}(t)$ the number of individuals that at time $t$ meet the following two requirements: 1) having type $a$ under the environment $\om$, 2) having type $b$ under the environment $\om^\delta$.} Clearly, we have
\[\lvert Z_N^{\om^{\delta}}(t)-Z_N^{\om}(t)\rvert=\lvert Y_N^{1,0}(t) - Y_N^{0,1}(t) \rvert\leq Y_N^{1,0}(t) + Y_N^{0,1}(t)\coloneqq Y_N^{\neq}(t).\]
Note that $Y_N^{\neq}(t)$ is the number of individuals that have a different type at time $t$ under $\om$ and $\om^\delta$. {In particular, we have $Y_N^{\neq}(t) \leq N$ a.s.} Let us assume that at time $t$ a graphical element arises in the basic background, {i.e. $t$ is an arrival time of one of the Poisson processes in the family $\Lambda$.} If the graphical element corresponds to a mutation event, then $Y_N^{\neq}(t)\leq Y_N^{\neq}(t-)$. If the graphical element is a neutral reproduction, we have
\[\Eb\left[Y_N^{\neq}(t)\mid Y_N^{\neq}(t-)\right]=Y_N^{\neq}(t-)+\frac{1}{N}Y_N^{\neq}(t-)(N-Y_N^{\neq}(t-))-\frac{1}{N}(N-Y_N^{\neq}(t-))Y_N^{\neq}(t-)=Y_N^{\neq}(t-).\]
{If the graphical element corresponds to a selective event, then $Y_N^{\neq}(t)$ can increase by $1$ only if the individual at the tail of the arrow has a different type at time $t$ under $\om$ and $\om^\delta$.} We thus have \[\Eb\left[Y_N^{\neq}(t)\mid Y_N^{\neq}(t-)\right]\leq\left(1+\frac1N\right)Y_N^{\neq}(t-).\]
 
Now, let $0\leq s<t\leq T$ and assume that there are {neither jumps of $\om^\delta$ nor} selective events in $(s,t)$. In particular, in $(s,t)$ only the population driven by $\om$ is affected by the environment. Moreover, since {neutral reproduction and mutation} events do not increase the expected value of $Y_N^{\neq}$, we obtain \[\Eb\left[Y_N^{\neq}(t-)\mid Y_N^{\neq}(s)\right]\leq Y_N^{\neq}(s) +N\sum_{u\in(s,t)}\Delta \om(u).\]

In addition, if at time $t$ the environment $\om^\delta$ jumps (there are only finitely many of these jumps), then \[\Eb\left[Y_N^{\neq}(t)\mid Y_N^{\neq}(t-)\right]\leq Y_N^{\neq}(t-)(1+\Delta \om (t)).\]

Let {$0\leq t_1<\cdots<t_m\leq T$} be the jump times of $\om^\delta$. From the previous discussion, we obtain
\begin{equation}\label{auxbound}
 \Eb\left[Y_N^{\neq}(t_{i+1})\mid Y_N^{\neq}(t_i)\right]\leq \Eb\left[\left(1+\frac1N\right)^{K_i} \right]\left(Y_N^{\neq}(t_i)+N\epsilon_i(\delta)\right)(1+\Delta \om(t_{i+1})),
 \end{equation}
where $\epsilon_i(\delta)\coloneqq \sum_{u\in(t_i,t_{i+1})}\Delta \om(u)$ and $K_i$ is the number of selective events in $(t_i,t_{i+1})$. Note that $K_i$ has a Poisson distribution with parameter $N\sigma_N(t_{i+1}-t_i)$. Hence,
 \[\Eb\left[Y_N^{\neq}({t_{i+1}})\mid Y_N^{\neq}(t_i)\right]\leq e^{\sigma_N(t_{i+1}-t_i)}\left(Y_N^{\neq}(t_i)+N\epsilon_i(\delta)\right)(1+\Delta \om(t_{i+1})).\]
Iterating this formula and using that $Y_N^{\neq}(0)=0$ yields 
\begin{equation}\label{boundMt}
 \Eb\left[Y_N^{\neq}(t)\right]\leq e^{\sigma_N t} N\om_\delta(t)\prod_{t_i\leq t}(1+\Delta \om(t_i))\leq N\,\om_\delta(t)\,e^{\sigma_N t+\sum_{u\in[0,t]}\Delta \om(u)}.
\end{equation}
Recall the definition of the space $\textrm{BL}(E)$ from Appendix \ref{A2} {and that $[0,N]^n$ is equipped with the distance $d_1$ defined in \eqref{defd1}.} For any $n \geq 1$ and $F\in \textrm{BL}([0,N]^n)$, we have 
\[ \left\lvert \int F \dd\mu_N^{\vec{r}}(\om^\delta)- \int F \dd\mu_N^{\vec{r}}(\om)\right\rvert = \left\lvert \mathbb{E} \left [ F((Z_N^{\om^\delta}(r_j))_{j\in[n]}) \right ] - \mathbb{E} \left [ F((Z_N^\om(r_j))_{j\in[n]}) \right ] \right\rvert. \]
Hence, if $\lVert F\rVert_{\textrm{BL}}\leq 1$ {(see \eqref{defnormbl} for the definition of $\lVert \cdot\rVert_{\textrm{BL}}$)} and we couple {$Z_N^{\om^{\delta}}(t)$} and $Z_N^\om(t)$ as before, we get that 
\begin{align*}
\left\lvert \mathbb{E} \left [ F((Z_N^{\om^\delta}(r_j))_{j\in[n]}) \right ] - \mathbb{E} \left [ F((Z_N^\om(r_j))_{j\in[n]}) \right ] \right\rvert & \leq \left\lvert \mathbb{E} \left [ d_1((Z_N^{\om^\delta}(r_j))_{j\in[n]}, (Z_N^\om(r_j))_{j\in[n]}) \right ] \right\rvert \\ 
& = \mathbb{E} \left [ \sum_{j\in[n]}|Y_N^{\neq}(r_j)| \right ] \leq \sum_{j\in[n]} N\om_\delta(r_j)e^{\sigma_N r_j+\sum_{u\in[0,r_j]}\Delta \om(u)}, 
\end{align*}
where the last bound comes from \eqref{boundMt} applied at each $r_j$. Taking the supremum over all $F\in \textrm{BL}([0,N]^n)$ with $\lVert F\rVert_{\textrm{BL}}\leq 1$ and using the definition of the distance $\varrho_{[0,N]^n}$ in \eqref{defblm} we get \eqref{onedimbound}.
Now, define $Y_N^*(t)\coloneqq \sup_{u\in[0,t]}Y_N^{\neq}(u)$. If at time $t$ a neutral event occurs, then 
\[\Eb[Y_N^*(t)\mid Y_N^*(t-)]\leq\left(1+\frac1N\right)Y_N^*(t-).\]
Other events can be treated as before, leading to \eqref{auxbound} with $K_i$ being this time the number of selective and neutral events in $(t_i,t_{i+1})$. Hence, Eq. \eqref{unibound} follows similarly as \eqref{onedimbound}. The convergence of $Z_N^{\om^\delta}$ towards $Z_N^\om$ is a direct consequence of \eqref{unibound}. 
\end{proof}
\begin{proposition}\label{fjumps}
 Let $\om\in\Db_T^\star$ and $\{\om_k\}_{k\in\Nb}\subset \Db_T^\star$ be such that $d_T^\star(\om_k,\om)\to 0$ as $k\to\infty$. If $\om$ is simple and, for any $k\in\Nb$, $Z_N^\om(0)=Z_N^{\om_k}(0)$, then
 \begin{equation}\label{convfinitejumps}
(Z_N^{\om_k}(t))_{t\in[0,T]}\xrightarrow[k\to \infty]{(d)}(Z_N^\om(t))_{t\in[0,T]}.
\end{equation}
\end{proposition}
\begin{proof}
The proof consists of two parts. In the first part, we construct a time deformation $\lambda_k\in\Cs_T^\uparrow$ with suitable properties. In the second part, we compare $Z_N^{\om_k}\circ\lambda_k$ and $Z_N^\om$ under an appropriate coupling of the underlying Moran models.
 \smallskip
 
 \textbf{Part 1:}
We assume, without loss of generality, that $d_T^\star(\om_k,\om)>0$, for all $k\in\Nb$. Set $\epsilon_k\coloneqq 2d_T^\star(\om_k,\om)$, so that $d_T^\star(\om_k,\om)< \epsilon_k$. By definition of the metric $d_T^\star$ in \eqref{defdtstar}, there is $\phi_k\in \Cs_T^\uparrow$ such that 
\[\lVert \phi_{k}\rVert_T^0\leq \epsilon_k\quad\textrm{and}\quad\sum_{u\in[0,T]}|\Delta \om(u)-\Delta (\om_k\circ\phi_{k})(u)|\leq \epsilon_k,\]

where $\lVert \cdot \rVert_T^0$ is defined in \eqref{normbij}. Denote by $r_1<\cdots<r_n$ {the consecutive jump times} of $\om$ in $[0,T]$. We assume without loss of generality that $0<r_1\leq r_n<T$. The case where $\om$ jumps at $T$ can be reduced to the previous case, by extending $\omega_k$, $k\in\Nb$, and $\omega$ to $[0,T+\varepsilon]$ as constants in $[T,T+\varepsilon]$. Set $\gamma_k\coloneqq T\,\sqrt{e^{\epsilon_k}-1}$. In the remainder of the proof we assume that $k$ is sufficiently large, so that $\gamma_k\leq \min_{i\in[n]_0}(r_{i+1}-r_{i})/3$, where $r_0\coloneqq 0$ and $r_{n+1}\coloneqq T$. This condition ensures that the intervals $I_i^k\coloneqq [r_i-\gamma_k,r_i+\gamma_k]$, $i\in[n]$, are disjoint and contained in $[0,T]$. Now, define $\lambda_k:[0,T]\to[0,T]$ via
\begin{itemize}
 \item[(i)] For $u\notin \cup_{i=1}^nI_i^k$: $\lambda_k(u)\coloneqq u$.
 \item[(ii)] For $u\in [r_i-\gamma_k,r_i]:$ $\lambda_k(u)\coloneqq \phi_k(r_i)+m_i(u-r_i)$, where $m_i\coloneqq (\phi_k(r_i)-r_i+\gamma_k)/\gamma_k.$
 \item[(iii)] For $u\in (r_i,r_i+\gamma_k]:$ $\lambda_k(u)\coloneqq \phi_k(r_i)+\bar{m}_i(u-r_i)$, where $\bar{m}_i\coloneqq (r_i+\gamma_k-\phi_k(r_i))/\gamma_k.$
\end{itemize}
For $k$ sufficiently large, so that $\epsilon_k < \log 2$, we can infer from $\lVert \phi_{k}\rVert_T^0\leq \epsilon_k$ and from $\gamma_k=T\,\sqrt{e^{\epsilon_k}-1}$ that $m_i$ and $\bar{m}_i$ are positive. It is then straightforward to check that $\lambda_k\in\Cs_T^\uparrow$, 
$\lambda_k(I_i^k)=I_i^k$, $i\in[n]$, and that
\[\sum_{u\in[0,T]}|\Delta \om(u)-\Delta \omb_k(u)|\leq \epsilon_k,\]
where {$\omb_k\coloneqq \om_k\circ\lambda_k$.} Moreover, since $\lVert \phi_{k}\rVert_T^0\leq \epsilon_k$, we infer that $\phi_k(r_i)\in[e^{-\epsilon_k}r_i,e^{\epsilon_k}r_i]$. It follows that, for $k$ sufficiently large, 
\[1-2\sqrt{e^{\epsilon_k}-1} \leq m_i \leq 1+2\sqrt{e^{\epsilon_k}-1},\quad i\in[n],\]
and the same holds for $\bar{m}_i$. Note that we can write $\lambda_k(t) = \int_0^t p_k(u) \dd u$, with $p_k:[0,T] \mapsto \mathbb{R}$ taking only the values $(m_i)_{i\in[n]}$, $(\bar{m}_i)_{i\in[n]}$, and $1$. In particular, we have $|p_k(u)-1|\leq 2\sqrt{e^{\epsilon_k}-1}$ for all $u\in[0,T]$. Thus, for any $s,t\in[0,T]$ with $s\neq t$, the slope $(\lambda_k(s)-\lambda_k(t))/(s-t)$ belongs to $[1-2\sqrt{e^{\epsilon_k}-1}, 1+2\sqrt{e^{\epsilon_k}-1}]$. Therefore, for $k$ sufficiently large, we have 
\[\frac{\lambda_k(s)-\lambda_k(t)}{s-t}, \,\frac{s-t}{\lambda_k(s)-\lambda_k(t)}\leq 1+3\sqrt{e^{\epsilon_k}-1},\quad i\in[n].\]
Hence, using that $\log(1+x)\leq x$ for $x>-1$, {we obtain for $k$ sufficiently large 
\begin{equation}\label{lki}
\lVert \lambda_k\rVert_T^0\leq 3\sqrt{e^{\epsilon_k}-1}.
\end{equation}}
\textbf{Part 2:} 
 For $\delta>0$, we couple in $[0,T]$ a Moran model with parameters $(\sigma_N,\theta_N,\nu_0,\nu_1)$ and environment $\om$ to a Moran model with parameters $(\sigma_N,\theta_N,\nu_0,\nu_1)$ and environment $\om_k$ (both of size $N$) by using: (1) the same initial type configuration, (2) the same basic background, and (3) using in the second population the environmental background of the first one time-changed by $\lambda_k^{-1}$. {Under this coupling and by construction of the function $\lambda_k$,  the Moran model associated to $\om$ and the Moran model associated to $\om_k$ (time-changed by $\lambda_k$) experience the same basic events out of the time-intervals $I_i^k$. Moreover, at the times $r_i$,} the success of simultaneous environmental reproductions is decided according to the same uniform random variables.
\smallskip
  
For $t\in[0,T]$, let $Y_N^{\neq}(t)$ be the number of individuals that have a different type at time $t$ for $\om$ and at time $\lambda_k(t)$ for $\om_k$, and set $Y_N^*(t)\coloneqq \sup_{u\in[0,t]}Y_N^{\neq}(u)$. 
\smallskip

Consider the event $E_{k}\coloneqq \{\textrm{there are no basic events in $\cup_{i\in[n]}I_i^k$}\}$, and note that
\begin{equation}\label{ekcc}
\Pb(E_{k}^c)\leq n\left(1-e^{-2N(1+\sigma_N+\theta_N)\gamma_k}\right).
\end{equation}
 Moreover, on the event $E_{k}$, only the population driven by $\om_k$ can change in $(r_i,r_i+\gamma_k]$, and this can only be due to environmental events. Hence,
\begin{equation}\label{eqc1}
 \Eb[Y_N^*(r_i+\gamma_k)1_{E_k}]\leq \Eb[Y_N^*(r_i)1_{E_{k}}] +N\sum_{u\in(r_i,r_i+\gamma_k]}\Delta\omb_k(u).
\end{equation}
 A similar argument yields
 \begin{equation}\label{eqc2}
 \Eb[Y_N^*(r_{i+1}-)\,1_{E_{k}}]\leq \Eb[Y_N^*(r_{i+1}-\gamma_k)1_{E_{k}}]+ N\sum_{u\in(r_{i+1}-\gamma_k,r_{i+1})}\Delta\omb_k(u).
\end{equation}
 Since in the interval $(r_i+\gamma_k,r_{i+1}-\gamma_k]$ there are no simultaneous jumps of the two environments, we can proceed as in the proof of Proposition \ref{sjumps} to obtain
 \begin{equation}\label{eqc3}
 \Eb\left[Y_N^*(r_{i+1}-\gamma_k)1_{E_{k}}\right]\leq e^{(1+\sigma_N)(r_{i+1}-r_{i})}\left[\Eb[Y_N^*(r_i+\gamma_k)1_{E_{k}}]+N\sum_{u\in (r_i+\gamma_k,r_{i+1}-\gamma_k]}\Delta\omb_k(u)\right].
\end{equation}
 Moreover, at time $r_{i+1}$, there are two possible contributions to take into account: (i) the contribution of selective arrows arising simultaneously in both environments, and (ii) the contribution of selective arrows arising only on the environment with the biggest jump. This leads to
 \begin{equation}\label{eqc4}
 \Eb\left[Y_N^*(r_{i+1})\,1_{E_{k}}\right]\leq \Eb[Y_N^*(r_{i+1}-)\,1_{E_{k}}](1+\Delta \om(r_{i+1})\wedge \Delta \omb_k(r_{i+1}))+ N|\Delta \om(r_{i+1})-\Delta \omb_k(r_{i+1})|.
\end{equation}
Using \eqref{eqc1}, \eqref{eqc2}, \eqref{eqc3} and \eqref{eqc4}, we obtain
\[\Eb[Y_N^*(r_{i+1})\,1_{E_k}]\leq  e^{(1+\sigma_N)(r_{i+1}-r_{i})}\left[\Eb[Y_N^*(r_{i})\,1_{E_k}]+N\sum_{u\in 
(r_i,r_{i+1}]}|\Delta \om(u)-\Delta \omb_k(u)|\right](1+\Delta \om(r_{i+1})).\]
Iterating this inequality, using that $Y_N^*(0)=0$, and adding the contribution of the interval $(r_n+\gamma_k,T]$, we obtain
\begin{equation}\label{ekc}
\Eb\left[Y_N^*(T)\,1_{E_{k}}\right]\leq N\epsilon_k e^{(1+\sigma_N)T+\sum_{u\in(0,T]}\Delta \om(u)}.
\end{equation}
Using \eqref{ekcc}, \eqref{ekc}, \eqref{lki}, and the definition of $d_T^0$ in \eqref{defdt0} we obtain for $k$ sufficiently large
\[\Eb\left[d_T^0(Z_N^\om,Z_N^{\om_k})\right] \leq 2nN\left(1-e^{-2N(1+\sigma_N+\theta_N)\gamma_k}\right)+ {3} \sqrt{e^{\epsilon_k}-1}\vee\left(N\epsilon_k\, e^{(1+\sigma_N)T+\sum_{u\in(0,T]}\Delta \om(u)} \right).\]
{Since $\gamma_k\to 0$ and $\epsilon_k\to 0$ as $k\to\infty$, the result follows.}
\end{proof}
\begin{proof}[Proof of Theorem \ref{thm2.1} (Continuity)]
 If $\om$ has a finite number of jumps, the result follows directly from Proposition \ref{fjumps}. In the general case, note that for any $\delta>0$, we have
 \begin{equation}\label{trii}
  \Bl{\mu_N(\om_k)}{\mu_N(\om)}\leq \Bl{\mu_N(\om_k)}{\mu_N(\om_{k}^{\delta})}+ \Bl{\mu_N(\om_{k}^{\delta})}{\mu_N(\om^\delta)}+\Bl{\mu_N(\om^{\delta})}{\mu_N(\om)},
 \end{equation}
 where $\om^{\delta}$ is as in \eqref{defomdelta} and, similarly, $\om_{k}^{\delta}(t)\coloneqq \sum_{u\in[0,t]: \Delta\om_k(u)\geq \delta}\Delta \om_k(u)$. Recall the definition of $d_T^\star$ in \eqref{defdtstar}. We claim that, for any $\delta\in A_\om\coloneqq \{d>0:\Delta \om(u)\neq d \textrm{ for any $u\in[0,T]$}\}$, we have
{\begin{equation}\tag{Claim 1}\label{claim-cont}
d_T^\star(\om_{k}^{\delta},\om^\delta)\xrightarrow[k\to\infty]{}0.
\end{equation}}
Assume {that \eqref{claim-cont}} is true and let $\delta\in A_\om$. Note that for any $\lambda\in\Cs^\uparrow_T$, we have
\begin{align*}
 \om_{k,\delta}(T)&\coloneqq \sum_{u\in[0,T]: \Delta\om_k(u)< \delta}\Delta \om_k(u)=\om_k(T)-\om_{k}^{\delta}(T)=\om_k(\lambda(T))-\om_{k}^{\delta}(\lambda(T))\\
 &\leq |\om_k(\lambda(T))-\om(T)|+|\om(T)-\om^\delta(T)|+|\om^\delta(T)-\om_{k}^{\delta}(\lambda(T))|\\
 &\leq  d_T^0(\om,\om_k)+ \om_\delta(T)+d_T^0(\om_{k}^{\delta},\om^\delta)\leq  d_T^\star(\om,\om_k)+ \om_\delta(T)+d_T^\star(\om_{k}^{\delta},\om^\delta),
 \end{align*}
where we used the definition of $d_T^0$ in \eqref{defdt0} and then Lemma \ref{zero-star}. Combining this {with \eqref{claim-cont}} and Proposition \ref{sjumps}, we obtain
\begin{equation*}
 \limsup_{k\to\infty}\Bl{\mu_N(\om_k)}{\mu_N(\om_{k}^{\delta})}\leq N\om_\delta(T) e^{(1+\sigma_N)T+\om(T)}.
\end{equation*}
In addition, Proposition \ref{fjumps} together with {\eqref{claim-cont}} implies that
$\limsup_{k\to\infty}\Bl{\mu_N(\om_{k}^{\delta})}{\mu_N(\om^{\delta})}=0$.
Hence, letting $k\to\infty$ in \eqref{trii} and using Proposition \ref{sjumps}, we obtain
\[\limsup_{k\to\infty}\Bl{\mu_N(\om_k)}{\mu_N(\om)}\leq 2N\om_\delta(T) e^{(1+\sigma_N)T+\om(T)}.\]
The previous inequality holds for any $\delta\in A_\om$. It is plain to see that $\inf A_\om=0$. Hence, letting $\delta\to 0$ with $\delta\in A_\om$ in the previous inequality yields the result. 

It remains to prove {\eqref{claim-cont}}. Let $\delta\in A_\om$. Since $d_T^\star(\om_k,\om)$ converges to $0$ as $k\to\infty$, then we see from the definition of $d_T^\star$ in \eqref{defdtstar} that there is a sequence $(\lambda_k)_{k\in\Nb}$ with $\lambda_k\in\Cs_T^\uparrow$ such that
\[\lVert \lambda_k\rVert_T^0\xrightarrow[k\to\infty]{}0\quad\textrm{ and }\quad \epsilon_k\coloneqq  \sum_{u\in[0,T]}|\Delta(\om_k\circ\lambda_k)(u)-\Delta \om(u)| \xrightarrow[k\to\infty]{}0.\]
Set {$\omb_k\coloneqq \om_k\circ\lambda_k$.} Clearly, 
$\Delta\omb_k(u)\leq \epsilon_k+\Delta \om(u)$ and $\Delta\om (u)\leq \epsilon_k+\Delta \omb_k(u)$, $u\in[0,T].$
Therefore,
\begin{align*}
\om_{k}^{\delta}(\lambda_k(t))-\om^\delta(t)&=\sum_{u\in[0,t]: \Delta\omb_k(u)\geq \delta}\Delta \omb_k(u)-\sum_{u\in[0,t]: \Delta\om(u)\geq \delta}\Delta \om(u)\\
&\leq \sum_{u\in[0,t]: \Delta\om(u)\geq \delta-\epsilon_k}\Delta \omb_k(u)-\sum_{u\in[0,t]: \Delta\om(u)\geq \delta}\Delta \om(u)\\
&=\sum_{u\in[0,t]: \Delta\om(u)\geq \delta-\epsilon_k}(\Delta \omb_k(u)-\Delta \om(u))+\sum_{u\in[0,t]: \Delta\om(u)\in[\delta-\epsilon_k,\delta)}\Delta \om(u)\\
&\leq d_T^\star(\om_k,\om)+\sum_{u\in[0,T]: \Delta\om(u)\in[\delta-\epsilon_k,\delta)}\Delta \om(u).
\end{align*}
Similarly, we obtain
\begin{align*}
\om^\delta(t)-\om_{k}^{\delta}(\lambda_k(t))&=\sum_{u\in[0,t]: \Delta\om(u)\geq \delta}\Delta \om(u)-\sum_{u\in[0,t]: \Delta\omb_k(u)\geq \delta}\Delta \omb_k(u)\\
&\leq \sum_{u\in[0,t]: \Delta\om(u)\in[\delta,\delta+\epsilon_k)}\Delta \om(u)+\sum_{u\in[0,t]: \Delta\omb_k(u)\geq \delta}(\Delta \om(u)-\Delta \omb_k(u))\\
&\leq \sum_{u\in[0,T]: \Delta\om(u)\in[\delta,\delta+\epsilon_k)}\Delta \om(u)+d_T^\star(\om_k,\om).
\end{align*}
Thus, using the definition of $d_T^0$ in \eqref{defdt0}, we deduce that
\[d_T^0(\om_k^\delta,\om^\delta)\leq d_T^\star(\om_k,\om)+\sum_{u\in[0,T]: \Delta\om(u)\in(\delta-\epsilon_k,\delta+\epsilon_k)}\Delta \om(u).\]
Since $\delta\in A_\om$, letting $k\to\infty$ in the previous inequality yields $\lim_{k\to\infty }d_T^0(\om_k^\delta,\om^\delta)=0$. Recall that $\om^\delta$ has a finite number of jumps, and hence, {\eqref{claim-cont}} follows using Lemma \ref{zero-star}.
\end{proof}
\subsection{Results related to Section \ref{s23}: the Wright--Fisher process as a large population limit}\label{s32}
We start this section proving that the SDE \eqref{WFSDE} is well-posed.
\begin{proposition}[Existence and uniqueness]\label{eandu}
Let $\sigma,\theta\geq 0$, $\nu_0,\nu_1\in[0,1]$ with $\nu_0+\nu_1=1$. Let $J$ be a pure-jump subordinator with L\'{e}vy measure $\mu$ supported in $(0,1)$ and let $B$ be a standard Brownian motion independent of $J$. Then, for any $x_0\in[0,1]$, there is a pathwise unique strong solution $(X(t))_{t\geq 0}$ to the SDE \eqref{WFSDE} such that $X(0)=x_0$. Moreover, $X(t)\in[0,1]$ for all $t\geq 0$.
\end{proposition}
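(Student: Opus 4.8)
The plan is to establish pathwise uniqueness by a Yamada--Watanabe argument adapted to the jump term, and then to obtain existence by approximation, invoking pathwise uniqueness to upgrade weak existence to a strong solution. Observe first that every coefficient except the diffusion coefficient is Lipschitz on $[0,1]$: the mutation drift $x\mapsto\theta(\nu_0(1-x)-\nu_1 x)$ is affine, and the selection/environment coefficient $x\mapsto x(1-x)$ is smooth with bounded derivative. The only non-Lipschitz term is $\sqrt{2x(1-x)}$, which is $\tfrac12$-Hölder; this is the classical obstruction handled by Yamada--Watanabe. Throughout I write $J(t)=\int_0^t\int_{(0,1)} y\,N(\dd s,\dd y)$, where $N$ is a Poisson random measure on $[0,\infty)\times(0,1)$ with compensator $\dd s\,\mu(\dd y)$, so that a jump of $J$ of size $y$ maps the state by $g_y(x)\coloneqq x+y\,x(1-x)$.

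For pathwise uniqueness, let $X,\tilde X$ be two $[0,1]$-valued strong solutions driven by the same $B$ and $N$ with $X(0)=\tilde X(0)=x_0$, and set $D\coloneqq X-\tilde X$. I introduce the usual Yamada--Watanabe mollifiers $\phi_m\in C^2(\R)$ with $\phi_m(z)\uparrow|z|$, $|\phi_m'|\leq1$ and $\phi_m''(z)\leq 2/(m|z|)$, and apply It\^o's formula to $\phi_m(D(t))$. The $\dd B$-martingale has zero expectation; the Lipschitz drift terms contribute at most $C\int_0^t\E|D(s)|\,\dd s$; and the diffusion second-order term is bounded by $\tfrac12\phi_m''(D(s))\,|\sqrt{2X(1-X)}-\sqrt{2\tilde X(1-\tilde X)}|^2\leq \tfrac12\cdot\tfrac{2}{m|D|}\cdot 2|D|=\tfrac2m$, using $|\sqrt{2a(1-a)}-\sqrt{2b(1-b)}|^2\leq 2|a-b|$. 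For the jump term, a direct computation gives $g_y(x)-g_y(\tilde x)=(x-\tilde x)\bigl(1+y(1-x-\tilde x)\bigr)$, so $|g_y(X(s-))-g_y(\tilde X(s-))-D(s-)|\leq y\,|D(s-)|$; since $|\phi_m'|\leq1$ and the compensator is $\dd s\,\mu(\dd y)$, the jump contribution is at most $\bigl(\int_{(0,1)} y\,\mu(\dd y)\bigr)\int_0^t\E|D(s)|\,\dd s$, which is finite by hypothesis. Letting $m\to\infty$ (so $\phi_m(D(t))\to|D(t)|$) yields $\E|D(t)|\leq C\int_0^t\E|D(s)|\,\dd s$, and Gronwall's lemma forces $D\equiv0$.

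For existence, I first record that $[0,1]$ is invariant: the continuous drift points inward at the boundary (it equals $\theta\nu_0\geq0$ at $x=0$ and $-\theta\nu_1\leq0$ at $x=1$) while the diffusion coefficient vanishes there, and $g_y([0,1])\subset[0,1]$ since $0\leq g_y(x)=x+y\,x(1-x)\leq x+x(1-x)\leq1$ for $y\in(0,1)$. When $\mu$ is finite ($J$ compound Poisson) I construct a strong $[0,1]$-valued solution by interlacing: on each of the finitely many inter-jump intervals I solve the classical Wright--Fisher SDE, whose strong existence and uniqueness in $[0,1]$ is the continuous Yamada--Watanabe theorem, applying $g_{\Delta J}$ at each jump time. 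For general $\mu$ I truncate, setting $J_n(t)\coloneqq\int_0^t\int_{[1/n,1)} y\,N(\dd s,\dd y)$, and let $X_n$ be the corresponding strong solution driven by the same $B$ and $N$. Since $\int_{(0,1)} y\,\mu(\dd y)<\infty$, $J_n\to J$ uniformly on compacts almost surely. Rerunning the estimate above for $X_n-X_m$ — where the only new term comes from jumps $y\in[1/n,1/m)$ seen by $X_n$ but not $X_m$, contributing at most $\bigl(\int_{[1/n,1/m)} y\,\mu(\dd y)\bigr)t\to0$ — shows that $(X_n)$ is Cauchy for $\E\sup_{[0,T]}|\cdot|$. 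The limit $X$ is $[0,1]$-valued and adapted; passing to the limit in the integral form of \eqref{WFSDE}, with dominated convergence for the jump integral justified by $\int y\,\mu(\dd y)<\infty$, shows that $X$ solves the equation, and pathwise uniqueness yields the claimed strong uniqueness.

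The main obstacle is running the Yamada--Watanabe scheme in the presence of jumps. What makes it go through is the combination of two structural features: the coefficient $x(1-x)$ is globally Lipschitz on $[0,1]$, so $g_y$ is a Lipschitz perturbation of the identity with constant controlled linearly in $y$; and the integrability $\int_{(0,1)} y\,\mu(\dd y)<\infty$ makes the aggregate jump contribution to the Yamada--Watanabe functional bounded by $C\int_0^t\E|D(s)|\,\dd s$ while forcing the small-jump truncation error to vanish. Some care is needed at the boundary $\{0,1\}$ to keep the argument of $\sqrt{2x(1-x)}$ inside $[0,1]$ throughout the approximation, which is precisely what the invariance of $[0,1]$ under both the continuous flow and the maps $g_y$ guarantees.
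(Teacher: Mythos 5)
Your proposal is correct, but it takes a more self-contained route than the paper. The paper does not run Yamada--Watanabe by hand: it extends the coefficients from $[0,1]$ to all of $\mathbb{R}$, verifies the hypotheses of two general theorems of Li and Pu (\cite[Thms.~3.2, 5.1]{LiPu12}) --- namely Lipschitz continuity of $x\mapsto b_1(x)+\int_{(0,1)}g(x,u)\mu(\dd u)$, the $1/2$-H\"older bound $|a(x)-a(y)|^2\leq 2|x-y|$ for $a(x)=\sqrt{2x(1-x)}$ (your inequality, proved there as \eqref{lipsigma}), and boundedness --- and then obtains invariance of $[0,1]$ by adapting \cite[Prop.~2.1]{FuLi10}. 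The key analytic inputs are therefore identical to yours: the $1/2$-H\"older estimate on the diffusion coefficient, the Lipschitz structure of the jump map $g_y(x)=x+yx(1-x)$, and the integrability $\int y\,\mu(\dd y)<\infty$ that controls the aggregate jump contribution. What differs is the packaging. The paper's citation-based route is shorter and delegates the approximation machinery (your Cauchy estimate for the truncated solutions, the passage to the limit in the jump integral, the BDG step for the supremum) to the quoted theorems, at the cost of having to extend coefficients off $[0,1]$ and then tweak the Fu--Li invariance criterion, whose stated hypothesis $x+g(x,u)\in[0,1]$ for all $x\in\mathbb{R}$ fails for their extension. Your interlacing-plus-truncation construction avoids that awkwardness entirely --- each building block (the classical Wright--Fisher diffusion between jumps, and the maps $g_y$) manifestly preserves $[0,1]$, so invariance is free --- and your direct Yamada--Watanabe computation, including the observation $g_y(x)-g_y(\tilde x)=(x-\tilde x)(1+y(1-x-\tilde x))$ giving the jump contribution $\bigl(\int y\,\mu(\dd y)\bigr)\int_0^t\mathbb{E}|D(s)|\,\dd s$, is sound as written (the expectations are legitimate because all quantities are bounded on $[0,1]$). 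Both proofs are valid; yours trades brevity for transparency.
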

\begin{remark}
The Wright--Fisher diffusion defined via the SDE \eqref{WFSDE} with $\theta=0$ corresponds to \cite[Eq. 10]{CSW19} with $K_y$, $y\in(0,1)$, being a random variable that takes the value $1$ with probability $1-y$ and the value $2$ with probability $y$.
\end{remark}
\begin{proof}
In order to prove the existence and the pathwise uniqueness of strong solutions of \eqref{WFSDE} we use \cite[Thms. 3.2, 5.1]{LiPu12}. To this purpose, we first extend \eqref{WFSDE} to an SDE on $\Rb$ and we write it in the form of \cite[Eq. 2.1]{LiPu12}. Note that by L\'{e}vy-It\^{o} decomposition, the pure-jump subordinator $J$ can be expressed as
$J(t)=\int_{(0,1)} x\, N(t,\dd x),\quad t\geq 0,$
where $N(\dd s,\dd x)$ is a Poisson random measure with intensity measure $\mu$. Hence, defining the functions $a,b:\Rb\to\Rb$ and $g:\Rb\times(0,1)\to\Rb$ by setting
\[a(x)\coloneqq \sqrt{2x(1-x)},\quad b(x)\coloneqq\sigma x(1-x)+\theta\nu_0(1-x)-\theta\nu_1 x, \quad g(x,u)\coloneqq x(1-x)u,\]
for $x\in[0,1]$ and $u\in(0,1)$; $a(x)\coloneqq 0$, $g(x,u)\coloneqq 0$ for $x\notin[0,1]$; $b(x)\coloneqq \theta\nu_0$ for $x<0$ and $b(x)\coloneqq -\theta\nu_1$ for $x>1$, Eq. \eqref{WFSDE} can be extended to the following SDE on $\Rb$
\begin{equation}\label{ASDE}
 X(t)=X(0)+\int_0^t a(X(s))\dd B(s)+\int_0^t\int_{(0,1)}g(X(s-),u)N(\dd s,\dd u)+\int_0^t b(X(s))\dd s,\quad t\geq 0.
\end{equation}
{Thus,} any solution $(X(t))_{t\geq 0}$ of \eqref{ASDE} such that $X(t)\in[0,1]$ for any $t\geq 0$ is a solution of \eqref{WFSDE} and vice-versa. Note that the functions $a, b,g$ are continuous. Moreover, $b=b_1- b_2$, where 
\[\begin{array}{llll}
 b_1(x)\coloneqq \theta\nu_0+\sigma x\quad\textrm{for $x\in[0,1]$,} & b_1(x)\coloneqq \theta\nu_0\quad\textrm{for}\quad x\leq0,&\textrm{and}\quad b_1(x)\coloneqq \theta\nu_0+\sigma\quad\textrm{for}\quad x\geq1,\\
 b_2(x)\coloneqq \theta x +\sigma x^2\quad\textrm{for $x\in[0,1]$,}& b_2(x)\coloneqq 0\quad\,\,\,\,\,\,\textrm{for}\quad x\leq0,&\textrm{and}\quad b_2(x)\coloneqq \theta+\sigma\quad\,\,\,\,\,\,\textrm{for}\quad x\geq1.
\end{array}\]
In addition, $b_2$ is non-decreasing. Thus, in order to apply \cite[Thms. 3.2, 5.1]{LiPu12}, we only need to verify the sufficient conditions (3.a), (3.b) and (5.a) therein. Condition (3.a) in our case amounts to prove that $x\mapsto b_1(x)+\int_{(0,1)}g(x,u)\mu(\dd u)$ is Lipschitz continuous. In fact, a straightforward calculation shows that
\[|b_1(x)-b_1(y)|+\int_{(0,1)}|g(x,u)-g(y,u)|\mu(\dd u)\leq \left(\sigma+\int_{(0,1)}u\mu(\dd u)\right)|x-y|,\quad x,y\in\Rb,\]
 and hence (3.a) follows. Condition (3.b) amounts to prove that $x\mapsto a(x)$ is $1/2$-H\"older, which was already shown in the proof of \cite[Lemma 3.6]{GS18}.
Therefore, \cite[Thm. 3.2]{LiPu12} yields the pathwise uniqueness for \eqref{ASDE}. Condition (5.a) follows from the fact that the functions $a, b$, $x\mapsto \int_{(0,1)} g(x,u)^2\mu(\dd u)$ and $x\mapsto \int_{(0,1)} g(x,u)\mu(\dd u)$ are bounded on $\Rb$. Hence, \cite[Thm. 5.1]{LiPu12} ensures the existence of a strong solution to \eqref{ASDE}. It remains to show that any solution of \eqref{ASDE} with $X(0)\in[0,1]$ is such that $X(t)\in[0,1]$ for any $t\in[0,1]$. Sufficient conditions implying such a result are provided in \cite[Prop. 2.1]{FuLi10}. The conditions on the diffusion and drift coefficients are satisfied, namely, $a$ is $0$ outside $[0,1]$ and $b(x)$ is positive for $x\leq 0$ and negative for $x\geq 1$. However, the condition on the jump coefficient, $x+g(x,u)\in[0,1]$ for every $x\in\Rb$, is not fulfilled. Nevertheless, the proof of \cite[Prop. 2.1]{FuLi10} works without modifications under the alternative condition $x+g(x,u)\in[0,1]$ for $x\in[0,1]$ and $g(x,u)=0$ for $x\notin[0,1]$, which is in turn satisfied. This ends the proof.
\end{proof}

\begin{lemma}\label{core} The solution of the SDE \eqref{WFSDE} is a Feller process with generator $\As$ satisfying for all $f\in \mathcal{C}^2([0,1],\Rb)$
 \begin{align*}
\As f(x)&=x(1-x)f''(x)+ (\sigma x(1-x)+\theta\nu_0(1-x)-\theta\nu_1 x) f{'}(x)+\int_{(0,1)}\left(f\left(x+ x(1-x)u\right)-f(x)\right)\mu(\dd u).
\end{align*}
Moreover, $\mathcal{C}^\infty([0,1],\Rb)$ is an operator core for $A$.
 \end{lemma}
 \begin{proof}
Since pathwise uniqueness implies weak uniqueness (see \citep[Thm.~1]{BLP15}), we infer from \citep[Cor.~2.16]{Ku11} that the martingale problem associated to $A$ in $\Cs^{2}([0,1])$ is well-posed. Moreover, an inspection of the proof shows that this is also true in $\Cs^{\infty}([0,1])$. Using \cite[Prop. 2.2]{vanC92}, we deduce that $X$ is Feller. The fact that $\Cs^\infty([0,1])$ is a core follows then from \cite[Thm.~2.5]{vanC92}.
 \end{proof}
Now, we proceed to prove the first part of the main result of Section \ref{s23}, i.e. the annealed convergence of a sequence Moran models towards the solution of the SDE \eqref{WFSDE}.
\begin{proof}[Proof of Theorem \ref{thm2.2}-(1) (Annealed convergence)]
Let $\As_N^*$  and $\As$ be the infinitesimal generators of the processes {$(X_N(t))_{t\geq 0}$} and $(X(t))_{t\geq 0}$, respectively. {Note that $(X_N(t))_{t\geq 0}$ has state space
{\begin{equation} \label{defen}
E_N\coloneqq \{k/N:k\in[N]_0\}. 
\end{equation}}} We will prove that, for all $f\in\Cs^\infty([0,1],\Rb)$, 
{\begin{equation}\tag{Claim 2}\label{claimcvgenerator}
\sup\limits_{x\in E_N}|\As_N^* f(x)-\As f(x)|\xrightarrow[N\to\infty]{} 0.
\end{equation}
Provided \eqref{claimcvgenerator}} is true, since $X$ is Feller and $\Cs^\infty([0,1],\Rb)$ is an operator core for $A$ (see Lemma \ref{core}), the result follows applying \cite[Theorem 19.25]{Kall02}. Thus, it remains to prove {\eqref{claimcvgenerator}}. To this end, we decompose the generator $\As$ as $\As^{1}+\As^{2}+\As^3+\As^4$, where 
\begin{align*}
\As^1f(x)\coloneqq x(1-x)f{''}(x),\quad\qquad\qquad\qquad\qquad\qquad &\As^2f(x)\coloneqq (\sigma x(1-x)+\theta\nu_0(1-x)-\theta\nu_1 x) f{'}(x),\\
\As^3f(x)\coloneqq \int_{(0,\varepsilon_N)}\!\!\left(f\left(x+ x(1-x)u\right)-f(x)\right)\mu(\dd u),\quad &
\As^4f(x)\coloneqq \int_{(\varepsilon_N,1)}\!\!\left(f\left(x+ x(1-x)u\right)-f(x)\right)\mu(\dd u). 
\end{align*}
We will choose $\varepsilon_N>0$ later in an appropriate way. We also write $\As_N^*=\As_N^{1}+\As_N^{2}+\As_N^3+\As_N^4$, where 
\begin{align*}
\As_N^1f(x)&\coloneqq N^2x(1-x)\left[\Delta_{\frac1N}f(x)+\Delta_{-\frac1N}f(x)\right],\\
\As_N^2f(x)&\coloneqq N^2(\sigma_N x(1-x)+\theta_N\nu_0(1-x))\left[\Delta_{\frac1N}f(x)\right]+N^2\theta_N\nu_1x\left[\Delta_{-\frac1N}f(x)\right],\\
\As_N^3f(x)&\coloneqq \int_{(0,\varepsilon_N^{})}\!\!\!\!\!\!\!\Eb\left[f\left(x+\xi_N(x,u)\right)\right]-f(x)\mu(\dd u),\quad
\As_N^4f(x)\coloneqq \int_{(\varepsilon_N^{},1)}\!\!\!\!\!\!\!\Eb\left[f\left(x+\xi_N(x,u)\right)\right]-f(x)\mu(\dd u),
\end{align*}
where $\Delta_hf(x)\defeq f(x+h)-f(x)$, and $\xi_N(x,u)\coloneqq \Hs(N,N(1-x),B_{Nx}(u))/N$, with
$\Hs(N,N(1-x),k)\sim\hypdist{N}{N(1-x)}{k}$, and $B_{Nx}(u)\sim\bindist{Nx}{u}$ being independent. 
\smallskip

Let $f\in\Cs^\infty([0,1],\Rb)$. Note that
\begin{equation}\label{t0}
 \sup_{x\in E_N} |\As_N^* f(x)-\As f(x)|\leq \sum_{i=1}^4 \sup_{x\in E_N} |\As_N^i f(x)-\As^i f(x)|.
\end{equation}
Using Taylor expansions of order three around $x$ for $f(x+1/N)$ and $f(x-1/N)$, we get 
\begin{equation}\label{t1}
 \sup\limits_{x\in E_N}|\As_N^1 f(x)-\As^1 f(x)|\leq \frac{\lVert f'''\rVert_\infty}{3N}. 
\end{equation}
Similarly, using the triangular inequality and appropriate Taylor expansions of order two yields
\begin{equation}\label{t2}
 \sup\limits_{x\in E_N}|\As_N^2 f(x)-\As^2 f(x)|\leq {\frac{(N\sigma_N+N\theta_N) \lVert f''\rVert_\infty}{2N}}+ (|\sigma-N\sigma_N|+|\theta-N\theta_N|)\lVert f'\rVert_{\infty}.
\end{equation}
{Since $N\sigma_N\to\sigma$ and $N\theta_N\to\theta$, the right-hand side in \eqref{t2} converges to $0$ as $N\to\infty$}. In addition, since $\Eb[\xi_N(x,u)]=x(1-x)u$, we have
\[|\As^3_Nf(x)|\leq \lVert f'\rVert_\infty\int_{(0,\varepsilon_N)} u\mu(\dd u),\quad x\in[0,1],\]
and hence,
\begin{equation}\label{t3}
 \sup\limits_{x\in E_N}|\As_N^3 f(x)-\As^3 f(x)|\leq 2 ||f'||_\infty\int_{(0,\varepsilon_N^{})} u\mu(\dd u).
\end{equation}
For the last term, we first note that
\begin{align*}
 \left|\Eb\left[f\left(x+\xi_N(x,u)\right)-f(x+x(1-x)u)\right]\right|&
 \leq \lVert f'\rVert_{\infty}\Eb\left[\left|\xi_N(x,u)-x(1-x)u\right|\right]\\
 &\leq \lVert f'\rVert_{\infty}\sqrt{\Eb\left[\left(\xi_N(x,u)-x(1-x)u\right)^2\right]}\leq\lVert f'\rVert_{\infty} \sqrt{\frac{u}{N}}.
\end{align*}
In the last inequality we used that
\begin{equation}\label{hybimix}
 \Eb\left[\left(\xi_N(x,u)-x(1-x)u\right)^2\right]=\frac{x(1-x)^2u(1-u)}{N}+\frac{Nx^2(1-x)u^2}{N^2(N-1)}\leq \frac{u}{N},
\end{equation}
which is obtained from standard properties of the hypergeometric and binomial distributions. Hence, 
\begin{equation}\label{t4}
 \sup\limits_{x\in E_N}|\As_N^4 f(x)-\As^4 f(x)|\leq ||f'||_\infty\int_{(\varepsilon_N^{},1)} \sqrt{\frac{u}{N}}\,\mu(du)\leq \frac{||f'||_\infty}{\sqrt{N\varepsilon_N}}\int_{(\varepsilon_N^{},1)} u\,\mu(\dd u) .
\end{equation}
Now, choose $\varepsilon_N\coloneqq 1/\sqrt{N}$. Since $\int_{(0,1)} u\mu(\dd u)<\infty$, {\eqref{claimcvgenerator}} follows by plugging \eqref{t1}, \eqref{t2}, \eqref{t3} and \eqref{t4} in \eqref{t0} and letting $N\to\infty$. 
\end{proof}
Before embarking on the proof of the second part of Theorem \ref{thm2.2}, we prove the following estimates for the Moran model with null environment.
\begin{lemma}\label{nullest}
 For any $x\in E_N$ (see \eqref{defen} for its definition) and $t\geq 0$, we have
 \[\Eb\left[\left(X_N^{\bz}(t)-x\right)^2\mid X_N^{\bz}(0)=x\right]\leq \left(\frac12 + N(\sigma_N+3\theta_N)\right)t,\]
 and
 \[-N\theta_N\nu_1 t\leq \Eb\left[X_N^\bz(t)-x\mid X_N^{\bz}(0)=x\right]\leq N(\sigma_N+\theta_N\nu_0) t.\]
\end{lemma}
\begin{proof}
 Fix $x\in E_N$ and consider the functions $f_x,g_x:E_N\to[0,1]$ defined via $f_x(z)\coloneqq (z-x)^2$ and $g_x(z)\coloneqq z-x$, $z\in E_N$. The process $X_N^\bz$ is a Markov chain with generator $\As_N^{\star,0}\coloneqq \As_N^1+\As_N^2$, where $\As_N^1$ and $\As_N^2$ are defined in the proof of Theorem \ref{thm2.2}. Moreover, for every $z\in E_N$, we have
 \begin{align*}
  \As_N^{\star,0} f_x(z)&{=2z(1-z)+N\left[(\sigma_N z+\theta_N \nu_0)(1-z)\left(2(z-x)+\frac1N\right)+\theta_N\nu_1 z\left(2(x-z)+\frac1N\right)\right]}\\
  &{\leq \frac12+N\left[3\left(\frac{\sigma_N}{4}+\theta_N \nu_0\right)+3\theta_N\nu_1\right]}\leq \frac12 + N(\sigma_N+3\theta_N),
 \end{align*}
  and 
 \begin{align*}
  \As_N^{\star,0} g_x(z)&=N\left[(\sigma_N z+\theta_N \nu_0)(1-z)-\theta_N\nu_1 z\right]\in[-N\theta_N\nu_1,N(\sigma_N+\theta_N\nu_0)].
 \end{align*}
 Hence, Dynkin's formula applied to $X_N^0$ with $f_x$ leads to
 \[\Eb\left[\left(X_N^\bz(t)-x\right)^2\mid X_N^{\bz}(0)=x\right]=\int_0^t \Eb\left[\As_N^{\star,0} f_x(X_N^\bz(s))\mid X_N^{\bz}(0)=x\right]\dd s\leq \left(\frac12 + N(\sigma_N+3\theta_N)\right)t.\]
 Similarly, applying Dynkin's formula to $X_N^0$ with $g_x$, we obtain
 \[\Eb\left[X_N^\bz(t)-x\mid X_N^{\bz}(0)=x\right]=\int_0^t \Eb\left[\As_N^{\star,0} g_x(X_N^\bz(s))\mid X_N^{\bz}(0)=x\right]\dd s\in[-N\theta_N\nu_1 t,N(\sigma_N+\theta_N\nu_0)t],\]
which ends the proof.
\end{proof}

\begin{proposition}[Quenched tightness]\label{q-tight}
Assume that $N \sigma_N\rightarrow \sigma$ and $N\theta_N\rightarrow \theta$ as $N\to\infty$. For any $\om\in\Db^\star$, the sequence {$(X_N^\om)_{N\geq 1}$} is tight.
\end{proposition}
\begin{proof}
Let $(\Fs_s^N)_{s\geq 0}$ denote the natural filtration associated to the process $X_N^\om$. To prove the tightness of the sequence {$(X_N^\om)_{N\geq 1}$}, we use \cite[Thm. 1]{BKS16}. For this we need to show that the following conditions hold:
\begin{itemize}
\item[A1)] For each $T,\varepsilon>0$, there exists a compact set $K\subset\Rb$ such that
\[\liminf_{N\to\infty} \Pb\left(X_N^\om(t)\in K,\, \forall t\leq T\right)\geq 1-\varepsilon.\] 
\item[A2)] There exist $\alpha>0$ and non-decreasing, c\`adl\`ag processes $F_N$, $F$ such that $F_N$ is $\Fs_0$-measurable,
$F_N\xRightarrow[]{(d)}F$ and for any $N\geq 1$ and every $0\leq s\leq t$
\[\Eb\left[1\wedge |X_N^\om(t)-X_N^\om(s)|^{\alpha} \right]\leq F_N(t)-F_N(s).\]
\end{itemize}
Since for all $t\geq 0$ and $N\geq 1$, $X_N^\om(t)\in E_N \subset [0,1]$ (see \eqref{defen} for the definition of $E_N$), condition A1) is satisfied. Now, we claim that there are constants $c,C>0$, independent of $N$, such that
{\begin{equation}\tag{Claim 3}\label{claiml2cont}
\Eb\left[(X_N^\om(t)-X_N^\om(s))^2\mid \Fs_s^N\right]\leq c\sum_{u\in[s,t]}\Delta \om(u)+ C (t-s),\quad\textrm{for all $0\leq s\leq t$}.
\end{equation}}
 If {\eqref{claiml2cont}} is true, then condition A2) is satisfied with $\alpha=2$ and $F_N(t)=F(t)=c\sum_{u\in[0,t]}\Delta \om(u)+Ct$, and the result follows from \cite[Thm. 1]{BKS16}. The rest of the proof is devoted to prove {\eqref{claiml2cont}}. 
\smallskip

For $x\in E_N$ and $t\geq 0$, we set $\psi_x(\om,t)\coloneqq \Eb_x[(X_N^\om(t)-x)^2].$
For $s\geq 0$, we set $\om_s(\cdot)\coloneqq \om(s+\cdot)$. From the definition of $X_N^\om$, it follows that, for any $0\leq s<t$,
\begin{equation}\label{InMP}
 \Eb\left[(X_N^\om(t)-X_N^\om(s))^2\mid \Fs_s^N\right]=\psi_{X_N^\om(s)}(\om_s,t-s).
\end{equation}
Let $0\leq s<t$. We split the proof of {\eqref{claiml2cont}} in three cases.
\smallskip

\textbf{Case 1: $\om$ has no jumps in $(s,t]$.} In particular, $\om_s$ has no jumps in $(0,t-s]$. Hence, restricted to $[0,t-s]$, $X_N^{\om_s}$ has the same distribution as $X_N^\bz$. Using Lemma \ref{nullest} with $x=X_N^\om(s)$ and plugging the result in \eqref{InMP}, we infer that {\eqref{claiml2cont}} holds for any $c\geq 1$ and $C\geq C_1\coloneqq 1/2+\sup_{N\in\Nb}(N(\sigma_N+3\theta_N))$.
\smallskip

\textbf{Case 2: $\om$ has $n$ jumps in $(s,t]$.} Let $t_1,\ldots,t_n\in(s,t]$ be the jump times of $\om$ in $(s,t]$ in increasing order. We set $t_0\coloneqq s$ and $t_{n+1}=t$. For any $i\in[n+1]$ and any $r\in(t_{i-1},t_{i})$, $\om$ has no jumps in $(t_{i-1},r]$. In particular, $(t_{i-1},r]$ falls into Case 1. Using {\eqref{claiml2cont}} in $(t_{i-1},r]$ and letting $r\to t_i$, we obtain 
\[\Eb\left[(X_N^\om(t_i -)-X_N^\om(t_{i-1}))^2\mid \Fs_{t_{i-1}}^N\right]\leq C_1(t_i-t_{i-1}).\]
Moreover,  
\[\Eb\left[(X_N^\om(t_i)-X_N^\om(t_{i}-))^2\mid \Fs_{t_{i}-}^N\right]\leq \Eb\left[\left(\frac{B_{N}(\Delta \om(t_i))}{N}\right)^2\right]\leq \Delta \om(t_i),\]
{where $B_{N}(\Delta \om(t_i)) \sim \bindist{N}{\Delta \om(t_i)}$.} Using the two previous inequalities and the tower property of the conditional expectation, we get
\begin{equation}\label{oneint}
 \Eb\left[(X_N^\om(t_i)-X_N^\om(t_{i-1}))^2\mid \Fs_{s}^N\right]\leq 2C_1(t_i-t_{i-1})+2\Delta\om(t_i).
\end{equation}
Now, note that
\begin{align*}
 (X_N^\om(t)-X_N^\om(s))^2&=\left(\sum_{i=0}^n( X_N^\om(t_{i+1})-X_N^\om(t_i))\right)^2\\
 &=\sum_{i=0}^n( X_N^\om(t_{i+1})-X_N^\om(t_i))^2+2\sum_{i=0}^n( X_N^\om(t_{i+1})-X_N^\om(t_i))(X_N^\om(t_i)-X_N^\om(s)).
\end{align*}
Using Eq. \eqref{oneint}, we see that
\[\Eb\left[\sum_{i=0}^n(X_N^\om(t_{i+1})-X_N^\om(t_{i}))^2\mid \Fs_{s}^N\right]\leq 2C_1(t-s)+2\sum_{i=1}^n\Delta\om(t_i).\]
Moreover, we have 
\begin{align*}
 \Eb\left[(X_N^\om(t_{i+1})-X_N^\om(t_{i}))(X_N^\om(t_{i})-X_N^\om(s))\mid \Fs_{t_i}^N\right]=\phi_{X_N^\om(s),X_N^\om(t_i)}(\om_{t_i},t_{i+1}-t_{i}),
\end{align*}
where {for $x,y\in E_N$ and $t\geq 0$ we set} $\phi_{x,y}(\om,t)\coloneqq (y-x){\Eb[X_N^\om(t)-y \mid X_N^\om(0)=y]}$. Since, for any $r\in(t_i,t_{i+1})$, {$\om_{t_i}$ has no jumps in $(0,r-t_i]$,} we can use Lemma \ref{nullest} to infer that {for any $x,y\in E_N$,} 
\[\phi_{x,y}(\om_{t_i}, r-t_i)\leq N((\sigma_N+\theta_N\nu_0)\vee\theta_N\nu_1)(r-t_i).\]
Note that $(y-x)\Eb_y[X_N^{\om_{t_i}}(t_{i+1}-t_i)-X_N^{\om_{t_i}}((t_{i+1}-t_i)-)]\leq \Delta \om(t_{i+1})$. Hence, letting $r\to t_{i+1}$, we get
\[\phi_{x,y}(\om_{t_i}, t_{i+1}-t_i)\leq N((\sigma_N+\theta_N\nu_0)\vee\theta_N\nu_1)(t_{i+1}-t_i)+\Delta\om(t_{i+1}),\]
{for any $x,y\in E_N$, hence in particular for $x=X_N^\om(s)$ and $y=X_N^\om(t_i)$.} Altogether, we obtain
\[\Eb\left[(X_N^\om(t)-X_N^\om(s))^2\mid \Fs_{s}^N\right]\leq C_2(t-s)+3\sum_{i=1}^n\Delta\om(t_i),\]
where $C_2\coloneqq 2C_1+\sup_{N\in\Nb}N((\sigma_N+\theta_N\nu_0)\vee\theta_N\nu_1)$. Hence, {\eqref{claiml2cont}} holds for any $C\geq C_2$ and $c\geq 3$.
\smallskip

\textbf{Case 3: $\om$ has infinitely many jumps in $(s,t]$}. For any $\delta$, we consider $\om^\delta$ as in \eqref{defomdelta} and we couple the processes $X_N^\om$ and $X_N^{\om^\delta}$ as in the proof of Proposition \ref{sjumps}. Note that $\om^\delta$ has only a finite number of jumps in any compact interval, thus $\om^\delta$ falls into case 2. Moreover, we have
\begin{align*}
 \psi_x(\om,t)&\leq 2\Eb_x[(X_N^\om(t)-X_N^{\om^\delta}(t))^2]+2\Eb_x[(X_N^{\om^\delta}(t)-x)^2]\\
 &\leq 2\Eb_x[|X_N^\om(t)-X_N^{\om^\delta}(t)|]+2\Eb_x[(X_N^{\om^\delta}(t)-x)^2]\\
 &\leq 2e^{N\sigma_N t+\om(t)} \sum_{u\in[0,t]:\Delta \om(u)<\delta}\Delta \om(u)+2\Eb_x[(X_N^{\om^\delta}(t)-x)^2],
\end{align*}
where in the last inequality we use Proposition \ref{sjumps}. Now, using {\eqref{claiml2cont}} for $X_N^{\om^\delta}$  and the previous inequality, we obtain
\[\Eb\left[(X_N^\om(t)-X_N^\om(s))^2\mid \Fs_{s}^N\right]\leq e^{N\sigma_N (t-s)+\om(t-s)}\sum_{u\in[s,t]:\Delta \om(u)<\delta}\Delta \om(u)+2C_2(t-s)+6\sum_{u\in[s,t]}\Delta\om(u).\]
We let $\delta\to 0$ and conclude that {\eqref{claiml2cont}} holds for any $C\geq 2C_2$ and $c\geq 6$. 
\end{proof}
Now, we proceed to prove the quenched convergence of the sequence of Moran models to the Wright--Fisher diffusion, under the assumption that the environment is simple.
\begin{proof}[Proof of Theorem \ref{thm2.2}-(2) (Quenched convergence)]
Let $B\coloneqq (B(t))_{t\geq 0}$ be a standard Brownian motion. We denote by $X^\bz$ the unique strong solution of \eqref{WFSDE} associated to $B$ and the null environment. Theorem \ref{thm2.2}-(1) implies in particular that $X_N^\bz$ converges to $X^\bz$ as $N\to\infty$.
\smallskip

Now, assume that $\om\neq\bz$ is simple. We denote by $T_\om$ the {(discrete but possibly infinite)} set of jump times of $\om$ in $(0,\infty)$. Moreover, for $0<i< |T_\om|+1$, we denote by $t_i\coloneqq t_i(\om)\in T_\om$ the time of the $i$-th jump of $\om$. We set $t_0 \coloneqq 0$ and $t_{|T_\om|+1} \coloneqq \infty$.
Therefore, we need to prove that
\[(X_N^\om(t))_{t\geq 0}\xrightarrow[N\to\infty]{(d)} (X^\om(t))_{t\geq 0},\]
where {we recall that} the process $X^\om$ {starting at $x_0$} is defined as follows. 
\begin{itemize}
 \item[(i)] {$X^\om(0)=x_0$}.
\item[(ii)] For $i\in\Nb$ with $i\leq |T_\om|+1$, the restriction of $X^{\om}$ to the interval $(t_{i-1},t_i)$  is given by a version of $X^\bz$ started at $X^\om(t_{i-1})$.
\item[(iii)] For $0<i< |T_\om|+1$, conditionally on $X^\om(t_i-)$, \[X^\om(t_i)=X^\om(t_i-)+X^\om(t_i-)(1-X^\om(t_i-))\Delta\om(t_i).\]
 \end{itemize}
Since the sequence $(X_N^\om)_{N\in\Nb}$ is tight {(see Proposition \ref{q-tight}),} it is enough to prove the convergence at the level of the finite dimensional distributions. More precisely, we will prove by induction on $i\in\Nb$ with $i\leq |T_\om|+1$ that for any finite set $I\subset[0,t_i)$, we have \[((X_N^\om(t))_{t\in I},X_N^\om(t_i-))\xrightarrow[N\to\infty]{(d)} ((X^\om(t))_{t\in I},X^\om(t_i-)).\]
For $i=|T_\om|+1<\infty$ we remove the components $X_N^\om(t_i-)$ and $X^\om(t_i-)$ since they do not make sense. Since $X_N^\om(t_1-)=X_N^\bz(t_1)$ and $X^\om(t_1-)=X^\bz(t_1)$ almost surely, the result for $i=1$ follows from Theorem \ref{thm2.2}-(1). Now, assume that the result is true for some $i<|T_\om|+1$ and let $I\subset(0,t_{i+1})$. Without loss of generality we assume that $I=\{s_1,\ldots,s_k, t_i,r_1,\ldots, r_m\}$, with $s_1<\cdots<s_k<t_i<r_1<\cdots<r_m$. We also assume that $i<|T_\om|$, the other case, i.e. $i=|T_\om|<\infty$, follows using an analogous argument.
\smallskip

Let $F:[0,1]^{k+1}\to\Rb$ be a Lipschitz function with $\lVert F\rVert_{\textrm{BL}}\leq 1$ {(see \eqref{defnormbl} for the definition of $\lVert \cdot\rVert_{\textrm{BL}}$)}. Note that
\begin{align*}
 \Eb\left[F((X_N^\om(s_j))_{j=1}^k,X_N^\om(t_i))\right]&=\Eb\left[F((X_N^\om(s_j))_{j=1}^k,X_N^\om(t_i-)+\xi_N(X_N^\om(t_i-),\Delta\om(t_i)))\right],
\end{align*}
where for $x\in E_N$ (see \eqref{defen} for the definition of $E_N$) and $u\in(0,1)$, $\xi_N(x,u)\coloneqq \Hs(N,N(1-x),B_{Nx}(u))/N$ with $\Hs(N,N(1-x),k)\sim\hypdist{N}{N(1-x)}{k}$, $k\in[N]_0$, and $B_{Nx}(u)\sim\bindist{Nx}{u}$ being independent between them and independent of $X_N^\om$. Now, set
\begin{align*}
 D_N&\coloneqq \Eb\left[F((X_N^\om(s_j))_{j=1}^k,X_N^\om(t_i-)+\xi_N(X_N^\om(t_i-),\Delta\om(t_i)))\right]\\
 &\,\,\,-\Eb\left[F((X_N^\om(s_j))_{j=1}^k,X_N^\om(t_i-)+X_N^\om(t_i-)(1-X_N^\om(t_i-))\Delta\om(t_i))\right].
\end{align*}
Using that $\lVert F\rVert_{\textrm{BL}}\leq 1$ and \eqref{hybimix}, we see that
$|D_N| \leq\sqrt{\Delta\om(t_i)/N}\to 0$ as $N\to\infty$.
Therefore, the induction hypothesis yields
\begin{align*}
 \Eb\left[F((X_N^\om(s_j))_{j=1}^k,X_N^\om(t_i))\right]&=D_N+\Eb\left[F((X_N^\om(s_j))_{j=1}^k,X_N^\om(t_i-)+X_N^\om(t_i-)(1-X_N^\om(t_i-))\Delta\om(t_i))\right]\\
 &\xrightarrow[N\to\infty]{}\,\Eb\left[F((X^\om(s_j))_{j=1}^k,X^\om(t_i-)+X^\om(t_i-)(1-X^\om(t_i-))\Delta\om(t_i))\right]\\
 &=\Eb\left[F((X^\om(s_j))_{j=1}^k,X^\om(t_i))\right].
\end{align*}
Therefore,
\begin{equation}\label{uptoti}
((X_N^\om(s_j))_{j=1}^k,X_N^\om(t_i))\xrightarrow[N\to\infty]{(d)} ((X^\om(s_j))_{j=1}^k,X^\om(t_i)).
\end{equation}
Let $G:[0,1]^{k+m+2}\to\Rb$ be a Lipschitz function with $\lVert G\rVert_{\textrm{BL}}\leq 1$. For $x\in E_N$, define
\[H_N(z,x)\coloneqq \Eb_x[G(z,x,(X_N^\bz(r_j-t_i))_{j=1}^m,X_N^\bz(t_{i+1}-t_i))], \ \forall z \in\Rb^k.\]
Note that
\begin{equation}\label{mpr1}
 \Eb[G((X_N^\om(s_j))_{j=1}^k,X_N^\om(t_i),(X_N^\om(r_j))_{j=1}^m,X_N^\om(t_{i+1}-))]=\Eb\left[H_N((X_N^\om(s_j))_{j=1}^k,X_N^\om(t_i))\right].
\end{equation}
Similarly, for $x\in[0,1]$, define
\[H(z,x)\coloneqq \Eb_x[G(z,x,(X^\bz(r_j-t_i))_{j=1}^m,X^\bz(t_{i+1}-t_i))],\quad z\in\Rb^k,\]
and note that
\begin{equation}\label{mpr2}
 \Eb[G((X^\om(s_j))_{j=1}^k,X^\om(t_i),(X^\om(r_j))_{j=1}^m,X^\om(t_{i+1}-))]=\Eb\left[H((X^\om(s_j))_{j=1}^k,X^\om(t_i))\right].
\end{equation}
Using \eqref{uptoti} and the Skorokhod representation theorem, we can assume that $((X_N^\om(s_j))_{j=1}^k,X_N^\om(t_i))_{N\geq1}$ and $((X^\om(s_j))_{j=1}^k,X^\om(t_i))$ {are defined on} the same probability space and that the convergence holds almost surely. In particular, we can write
\begin{equation}\label{er1r2}
 |\Eb\left[H_N((X_N^\om(s_j))_{j=1}^k,X_N^\om(t_i))\right]-\Eb\left[H((X^\om(s_j))_{j=1}^k,X^\om(t_i))\right]|\leq R_N^1+R_N^2,
\end{equation}
where
\begin{align*}
R_N^1&\coloneqq |\Eb\left[H_N((X_N^\om(s_j))_{j=1}^k,X_N^\om(t_i))\right]-\Eb\left[H_N((X^\om(s_j))_{j=1}^k,X_N^\om(t_i))\right]|,\\
R_N^2&\coloneqq |\Eb\left[H_N((X^\om(s_j))_{j=1}^k,X_N^\om(t_i))\right]-\Eb\left[H((X^\om(s_j))_{j=1}^k,X^\om(t_i))\right]|.
\end{align*}
Using that $\lVert G\rVert_{\textrm{BL}}\leq 1$, we obtain
\begin{equation}\label{er1}
 R_N^1\leq \sum_{j=1}^k\Eb[|X_N^\om(s_j)-X^\om(s_j)|]\xrightarrow[N\to\infty]{}0.
\end{equation}
Moreover, since $X_N^\om(t_i)$ converges to $X^\om(t_i)$ almost surely, we conclude using Theorem \ref{thm2.2} that, for any $z\in[0,1]^k$, $H_N(z,X_N^\om(t_i))$ converges to $H(z,X^\om(t_i))$ almost surely. Therefore, using dominated convergence theorem, we conclude that 
\begin{equation}\label{er2}
 R_N^2\xrightarrow[N\to\infty]{}0.
\end{equation}
Plugging \eqref{er1} and \eqref{er2} in \eqref{er1r2} and using \eqref{mpr1} and \eqref{mpr2} completes the proof. 
\end{proof}

\section{The ASG and its relatives}\label{S4}
In this section we formalize the definition of the quenched ASG and we provide definitions for the k-ASG and the pLD-ASG both in the annealed and in the quenched setting.
\subsection{Results related to Section \ref{s24}: the quenched ASG}\label{s41}
The aim of this section is to prove the following result.
\begin{proposition}[Existence of the quenched ASG]\label{eqasg}
Let $\om\in\Db^\star$. For any $n \in\Nb$ and $T > 0$, there exists a branching-coalescing particle system $(\Gs_T^\om(\beta))_{\beta\geq 0}$ starting at $\beta=0-$ with $n$ lines, that almost surely {consists of} finitely many lines at each time $\beta \in [0,T]$, and that satisfies the requirements {(i), (ii), (iii'), (iv) and (v)} of Definition \ref{defannealdasg}. 
\end{proposition}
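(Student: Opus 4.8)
The plan is to build $(\Gs^T(\om,\be))_{\be\ge0}$ for a general, possibly non-simple, environment by first constructing a dominating pure-branching process that ignores coalescences, and then realising the genuine ASG as a subprocess of it on a common probability space. The guiding observation is that, although a non-simple environment produces infinitely many jumps in $[0,T]$, almost surely only finitely many of them will \emph{effectively} split a line, thanks to the summability $\sum_{s\in[0,T]}\De\om(s)<\infty$ built into the definition of an environment.

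Concretely, I would first fix a Ulam--Harris labelling of all potential lines and attach to each label an independent rate-$\si$ Poisson clock for binary branchings, an independent family $\{U_{\ell,j}\}$ of uniform$[0,1]$ variables indexed by the jump times $s_j$ of $\om$ in $[0,T]$ (line $\ell$ splits at backward time $T-s_j$ iff $U_{\ell,j}\le\De\om(s_j)$), independent rate-$\theta\nu_0$ and rate-$\theta\nu_1$ mutation clocks, and, for every pair of labels, an independent rate-$2$ coalescence clock. On this background define the dominating process $\bar\Gs$ using only the $\si$-branchings and the environmental splits, with no coalescences. Since distinct lines of $\bar\Gs$ evolve independently, $\bar\Gs$ is a branching process in a deterministic varying environment, obtained as the monotone increasing limit, as $k\to\infty$, of the corresponding processes for the simple environments $\om^{(k)}$ retaining only the jumps of size $>1/k$. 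Writing $\bar K(\be)$ for its line count, a direct multiplicative computation (bounding the $\si$-branchings by a Yule estimate and each environmental jump by the factor $1+\De\om(s_j)$) gives
\begin{equation*}
\Eb[\bar K(T)]\;\le\; n\,e^{\si T}\prod_{j}\bigl(1+\De\om(s_j)\bigr)\;\le\; n\,\exp\!\Bigl(\si T+\sum_{s\in[0,T]}\De\om(s)\Bigr)\;<\;\infty ,
\end{equation*}
so $\bar K(T)<\infty$ almost surely; as $\bar K$ is non-decreasing, $\sup_{\be\le T}\bar K(\be)=\bar K(T)<\infty$ a.s.

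The key step is then to show that, almost surely, only finitely many environmental jumps split a line of $\bar\Gs$ in $[0,T]$. Writing $\bar m_j=\bar K((T-s_j)-)\le\bar K(T)$ for the number of lines just before jump $j$, and using that the splits at jump $j$ are governed by the fresh independent variables $\{U_{\ell,j}\}$, the probability that jump $j$ is effective equals $\Eb[1-(1-\De\om(s_j))^{\bar m_j}]\le \De\om(s_j)\,\Eb[\bar m_j]\le \Eb[\bar K(T)]\,\De\om(s_j)$. Summing over $j$ and invoking $\sum_j\De\om(s_j)<\infty$ shows the expected number of effective jumps is finite, hence their number is a.s. finite. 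I would then construct $\Gs^T(\om,\cdot)$ as the subprocess of $\bar\Gs$ obtained by superimposing the coalescence clocks: its set of active labels is kept as a subset of that of $\bar\Gs$ (branchings are inherited when the parent is active, while coalescences only remove labels), so that its line count satisfies $K(\be)\le\bar K(\be)\le\bar K(T)$ and every environmental split of $\Gs^T(\om,\cdot)$ is also a split of $\bar\Gs$. Consequently $\Gs^T(\om,\cdot)$ has only finitely many effective environmental jumps in $[0,T]$ too, and between consecutive such jumps it evolves with bounded total event rate (at most $\binom{\bar K(T)}{2}\cdot2+\bar K(T)(\si+\theta)$), so only finitely many events occur there, exactly as in the simple environment case discussed after Definition~\ref{defquenchedasg}. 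This yields a well-defined branching--coalescing system with a.s.\ finitely many lines at every $\be\in[0,T]$ and satisfying all the requirements of Definition~\ref{defquenchedasg}, with the convention that a jump possibly located at $\be=0$ (forward time $T$) is applied to the initial $n$ lines, which justifies starting at $\be=0-$.

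I expect the main obstacle to be this finitely-many-effective-jumps step together with the bookkeeping of the label coupling: one must ensure that the domination $K\le\bar K$ and the inclusion of environmental splits hold \emph{pathwise} under the shared randomness, so that the a priori finiteness transfers from the easier branching process $\bar\Gs$ to the genuine ASG without circularity. Once this is secured, the remaining verifications — the Yule/multiplicative expectation bound and the finite-rate evolution between effective jumps — are routine.
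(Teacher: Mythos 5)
Your proposal is correct and follows essentially the same route as the paper: the paper's ``virtual lines'' are precisely your dominating coalescence-free branching process, its Lemma on finiteness is proved by the same monotone approximation through the truncated simple environments $\om^{\delta}$ combined with the same multiplicative expectation bound $n e^{\sigma T}\prod(1+\Delta\om(s))$ (obtained there via Dynkin's formula and Gronwall between jumps), and the genuine ASG is then carved out by superimposing independent pairwise coalescence clocks on the finitely many virtual lines. Your additional ``finitely many effective environmental jumps'' step is a harmless refinement that the paper does not need explicitly, since finiteness of the virtual-line count already guarantees that only finitely many coalescence and mutation events are relevant.
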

\begin{proof}
We will explicitly construct a branching-coalescing particle system $(\Gs_T^\om(\beta))_{\beta\geq 0}$ with the desired properties. The main difficulty is that the environment $\om$ {may have} infinitely many jumps on each compact interval. Fix $T > 0$ and $n \in \Nb$ (sampling size) and define 
\[\Lambda_{\textrm{mut}}\coloneqq\{\lambda_{i}^{0},\lambda_{i}^{1} \}_{i \geq 1}, \ \Lambda_{\textrm{\textrm{sel}}}\coloneqq\{\lambda_{i}^{\vartriangle}\}_{i \geq 1}, \ \Lambda_{\textrm{coal}}\coloneqq\{\lambda_{i,j}^{\blacktriangle} \}_{i,j \geq 1, i \neq j},\]
where $\lambda_{i}^{0}, \lambda_{i}^{1}, \lambda_{i}^{\vartriangle}$ and $\lambda_{i,j}^{\blacktriangle}$ are independent Poisson processes on $[0,T]$ with {parameters $\theta \nu_0, \theta \nu_1, \sigma$ and $1$, respectively}. For $\beta \in [0,T]$, let $\tilde \omega (\beta) := \omega (T) - \omega ((T-\beta)-)$ and  $I_{\tilde \omega} := \{ \beta \in [0,T]: \Delta \tilde \omega (\beta) > 0 \}$; $I_{\tilde \omega}$ is the countable set of jump times of $\tilde \omega$. {Let $\mathcal{U}_{\tilde \omega} := \{ U_i(\beta) \}_{i \geq 1, \beta \in I_{\tilde \omega}}$ be a i.i.d. family of uniform random variables on $(0,1)$}. Assume, without loss of generality, that the arrival times of $\lambda_i^0,\lambda_i^1, \lambda_i^{\vartriangle}$ and $\lambda_{i,j}^{\blacktriangle}$, $i,j\in\Nb$, $i\neq j$, are countable, distinct between them and distinct from the jump times of $\tilde \omega$. Let $I_{\textrm{coal}}$ (resp. $I_{\textrm{\textrm{sel}}}$) be the set of arrival times of $\Lambda_{\textrm{coal}}$ (resp. $\Lambda_{\textrm{\textrm{sel}}}$). 
\smallskip

We first construct a set {$\Vs^\om\subset \Nb \times [0,T]$} {of \emph{virtual lines},} representing the set of lines that would be part of the ASG if there were no coalescence events. In particular, once a line enters this set, it will remain there. The set $\Vs^\om$ is constructed on the basis of the set of potential branching times $I_{\textrm{bran}}\coloneqq I_{\tilde{\om}}\cup I_{\textrm{sel}}$ as follows. Consider the (countable) set \[S_{\textrm{bran}}\coloneqq\{(\beta_1,\ldots,\beta_k):k\in\Nb,\, 0\leq \beta_1<\cdots<\beta_k, \beta_i\in I_{\textrm{bran}}, i\in[k]\},\]
and fix an injective function $i_\star:[n]\times S_{\textrm{bran}}\to\Nb\setminus[n]$. The set $\Vs^\om$ is {determined} as follows.
{\begin{enumerate}
 \item For any $i\in[n]$ (i.e. in the initial sample) and $\beta\in[0,T]$: $(i,\beta)\in\Vs^\om$.
 \item For any $(\beta_1,\ldots,\beta_k)\in S_{\textrm{bran}}$, $j\in[n]$, and $\beta\in[\beta_k,T]$: $(i_\star(j,\beta_1,\ldots,\beta_k),\beta)\in\Vs^\om$ if and only: 
 \begin{itemize}
 \item for any $\ell\in[k]$ with $\beta_\ell\in I_{\tilde{\om}}$, $U_{i_\star(j,\beta_1,\ldots,\beta_{\ell-1})}(\beta_\ell)\leq \Delta \tilde \omega(\beta_\ell)$ (or $U_{j}(\beta_1) \leq \Delta \tilde \omega(\beta_1)$ if $\ell=1$),
 \item for any $\ell\in[k]$ such that $\beta_\ell\in I_{\textrm{sel}}$, $\beta_\ell$ is a jump time of $\lambda_{i_\star(j,\beta_1,\ldots,\beta_{\ell-1})}^{\vartriangle}$ (of $\lambda_{j}^{\vartriangle}$ if $\ell=1$),
 \end{itemize}
\end{enumerate}}
and these are all possible virtual lines; see Fig. \ref{fig:asgw}.
\smallskip

\begin{figure}[t!]
\scalebox{0.6}{
\begin{tikzpicture}
\pgfmathsetseed{1337}
\draw[dashed, opacity=0.5] (0,-1)--(0,4.5);
\draw[dashed, red, opacity=0.5] (2,-1)--(2,4.5);
\draw[dashed, red, opacity=0.5] (8,-1)--(8,4.5);
\draw[dashed, red, opacity=0.5] (12,-1)--(12,4.5);
\draw[dashed, red, opacity=0.5] (15,-1)--(15,4.5);
\draw[dashed, opacity=0.5] (14,-1)--(14,4.5);
\draw[dashed, opacity=0.5] (10,-1)--(10,4.5);

\node [right, opacity=0.5] at (7.2,-2) {$t$};
\node [right, opacity=0.5] at (1.8,-1.5) {$t_0$};
\node [right, opacity=0.5] at (7.8,-1.5) {$t_1$};
\node [right, opacity=0.5] at (11.8,-1.5) {$t_2$};
\node [right, opacity=0.5] at (14.8,-1.5) {$T$};
\node [right, opacity=0.5] at (-0.2,-1.5) {$0$};
 \draw[-{angle 45[scale=5]}, opacity=0.5] (6.5,-1.8) -- (8.5,-1.8) node[text=black, pos=.6, xshift=7pt]{};


\node [right, opacity=1] at (1.8,5) {$\beta_5$};
\node [right, opacity=1] at (7.8,5) {$\beta_4$};
\node [right, opacity=1] at (9.8,5) {$\beta_3$};
\node [right, opacity=1] at (11.8,5) {$\beta_2$};
\node [right, opacity=1] at (13.8,5) {$\beta_1$};
\node [right, opacity=1] at (14.8,5) {$0$};
\node [right, opacity=1] at (-0.2,5) {$T$};
\node [right, opacity=1] at (7.2,5.5) {$\beta$};
 \draw[-{angle 45[scale=5]}, opacity=1] (8.5,5.3) -- (6.5,5.3) node[text=black, pos=.6, xshift=7pt]{};
 
\draw[very thick, opacity=1] (0,-1)--(15,-1);
\draw[opacity=0.4] (0,-0.5)--(15,-0.5);
\draw[very thick,opacity=1] (13,-0.5)--(15,-0.5);

\draw[very thick,opacity=1] (0,0)--(15,0);

\draw[opacity=0.4] (0,0.5)--(14,0.5);
\draw[very thick,opacity=1] (7,0.5)--(14,0.5);

\draw[opacity=0.4] (0,1)--(12,1);
\draw[very thick,opacity=1] (0,1.5)--(12,1.5);   

\draw[very thick,opacity=1] (0,2)--(10,2);

\draw[opacity=0.4] (0,2.5)--(8,2.5);
\draw[very thick,opacity=1] (0,3)--(8,3);

\draw[opacity=0.4] (0,3.5)--(2,3.5);
\draw[opacity=0.4] (0,4)--(2,4);
\draw[very thick,opacity=1] (1,4)--(2,4);
\draw[very thick,opacity=1] (0,4.5)--(2,4.5);
  

\draw[very thick,opacity=1]   (14,0) to[out=30,in=-30] (14,0.5);

\draw[opacity=0.4]     (12,-0.5) to[out=30,in=-30] (12,1);
\draw[very thick,opacity=1]    (12,0.5) to[out=30,in=-30] (12,1.5);

\draw[very thick,opacity=1]    (10,-1) to[out=30,in=-30] (10,2);

\draw[opacity=0.4]     (8,1) to[out=30,in=-30] (8,2.5);
\draw[very thick,opacity=1]   (8,1.5) to[out=30,in=-30] (8,3);

\draw[opacity=0.4]     (2,0.5) to[out=30,in=-30] (2,3.5);
\draw[very thick,opacity=1]    (2,1.5) to[out=30,in=-30] (2,4);
\draw[very thick,opacity=1]    (2,2) to[out=30,in=-30] (2,4.5);

\draw[very thick,opacity=1]   (7,1.5)--(7,0.5);
\draw[very thick,opacity=1]   (1,3)--(1,4);
\draw[very thick,opacity=1]   (13,0)--(13,-0.5);

\node[ultra thick] at (9.5,1.5) {$\bigtimes$} ;    
\node[ultra thick] at (6,0) {$\bigtimes$} ;  
\draw (3,1.5) circle (1.5mm)  [fill=white!100];  
\node[ultra thick] at (1,-0.5) {$\bigtimes$} ;        
\end{tikzpicture}}
	\caption{Illustration of the construction of the quenched ASG. The environment $\om$ has jumps at forward times $t_0$, $t_1$, $t_2$; backward times $\beta_1,\ldots,\beta_5$ belong to the set of potential branching times $\tilde I_{\rm{bran}}$. Virtual lines are depicted grey or black; active lines are black. The ASG in $[0,T]$ consists of the set of active lines together with their connections and mutation marks.}
	\label{fig:asgw}
\end{figure} 
Let $V^\om(\beta)\defeq \{i\in\Nb: (i,\beta)\in\Vs^\om\}$. {According to Lemma \ref{nbfinilines}} below, $V^\om(\beta)$ is almost surely finite for all $\beta \in [0,T]$. Now, for $\beta \in I_{\textrm{coal}}$, let $(a_{\beta},b_{\beta})$ be the pair $(i,j)$ such that $\beta$ is an arrival time of $\lambda_{i,j}^{\blacktriangle}$. Since the Poisson processes $\lambda_{i,j}^{\blacktriangle}$, $i\neq j$, have distinct jump times, $(a_{\beta},b_{\beta})$ is uniquely defined. Let 
\[\tilde I_{\textrm{coal}}\coloneqq \{ \beta \in I_{\textrm{coal}} : \ a_{\beta},b_{\beta} \in V^\om(\beta) \}\quad\textrm{and}\quad\tilde I_{\textrm{bran}}\coloneqq \{ \beta \in I_{\textrm{bran}} : \ V^\om(\beta-)\subsetneq V^\om(\beta) \}.\] 
Since $V^\om(T)$ is independent of $\Lambda_{\textrm{coal}}$ and almost surely finite, it follows that $\tilde I_{\textrm{coal}}$ and $\tilde I_{\textrm{bran}}$ are almost surely finite. Let $\beta_1 < \cdots < \beta_m$ be the elements of $\tilde I_{\textrm{coal}} \cup \tilde I_{\textrm{bran}}$ (set $\beta_{0} \coloneqq 0$ and $\beta_{m+1} \coloneqq T$ for convenience). We define $V_{\rm{on}}^\om(\beta)\subset V^\om(\beta)$, the set of \emph{active} lines at time $\beta$ as follows (see also Fig. \ref{fig:asgw}). {For $\beta=0$, we set $V_{\rm{on}}^\om(0)\coloneqq V^\om(0)$ and, for $\beta\in(\beta_\ell,\beta_{\ell+1})$, we set $V_{\rm{on}}^\om(\beta)\coloneqq V_{\rm{on}}^\om(\beta_\ell)$. For $\beta=\beta_\ell\in \tilde I_{\textrm{coal}}$, we set {$V_{\rm{on}}^\om(\beta_\ell)\coloneqq  V_{\rm{on}}^\om(\beta_\ell-)\setminus \{ a_{\beta_\ell} \}$ if $\{a_{\beta_\ell},b_{\beta_\ell}\}\subset V_{\rm{on}}^\om(\beta_\ell-)$}, and 
$V_{\rm{on}}^\om(\beta_\ell)\coloneqq V_{\rm{on}}^\om(\beta_\ell-)$ otherwise. Finally, for $\beta=\beta_\ell\in \tilde I_{\textrm{bran}}$, we set $V_{\rm{on}}^\om(\beta_\ell)\coloneqq V_{\rm{on}}^\om(\beta_\ell-) \cup J_\ell$,
where the set $J_\ell$ consists of the integers $i \in V^\om(\beta_\ell) \setminus V^\om(\beta_\ell-)$ such that: $i=i_\star(j,\beta_\ell)$ for some $j\in [n]\cap V_{\rm{on}}^\om(\beta_\ell-)$, or $i=i_\star(j,\hat \beta_1,\ldots,\hat \beta_k,\beta_\ell)$ for some $(j,\hat \beta_{1},\ldots,\hat \beta_{k})\in i_\star^{-1}(V_{\rm{on}}^\om(\beta_\ell-)\setminus [n])$ with $\hat{\beta}_k<\beta_\ell$.}
\smallskip

The {ASG on $[0,T]$} is then the branching-coalescing system starting with $n$ lines at levels in $[n]$, consisting at any time $\beta\in[0,T]$ of the lines in $V_{\rm{on}}^\om(\beta)$,  and where:
\begin{itemize}
\item[(i)] {For} $\beta \in I_{\textrm{bran}}$ such that $V_{\rm{on}}^\om(\beta-)\subsetneq V_{\rm{on}}^\om(\beta)$ and $i\in V_{\rm{on}}^\om(\beta)\setminus V_{\rm{on}}^\om(\beta-)$, either there is $(j,\hat \beta_{1},\ldots,\hat \beta_{k})\in[n]\times S_{\textrm{bran}}$ with $\hat \beta_{k}<\beta$ such that $i=i_\star(j,\hat \beta_1,\ldots,\hat \beta_k,\beta)$, or there is $j\in[n]$ such that $i=i_\star(j,\beta)$.
In the first case, line $i_\star(j,\hat \beta_1,\ldots,\hat \beta_k)$ branches at time $\beta$ into $i_\star(j,\hat \beta_1,\ldots,\hat \beta_k)$ (continuing line) and $i$ (incoming line). In the second case, line $j$ branches at time $\beta$ into $j$ (continuing line) and $i$ (incoming line). 
\item[(ii)] {For} $\beta \in I_{\textrm{coal}}$ such that $V_{\rm{on}}^\om(\beta)\subsetneq V_{\rm{on}}^\om(\beta-)$ and $i\in V_{\rm{on}}^\om(\beta-)\setminus V_{\rm{on}}^\om(\beta)$, $i=a_\beta$ and $b_{\beta}\in V_{\rm{on}}^\om(\beta)$. Thus, lines $i$ an $b_\beta$ merge into $b_\beta$ at time $\beta$.
\item[(iii)] At each $\beta \in [0,T]$ that is an arrival time of $\lambda_{i}^{0}$ (resp. $\lambda_{i}^{1}$) for some $i \in V_{\rm{on}}^\om(\beta)$, we mark line $i$ with a beneficial (resp. deleterious) mutation at time $\beta$. 
\end{itemize}
It is straightforward to see that the so-constructed branching-coalescing particle system satisfies the requirements {(i), (ii), (iii'), (iv) and (v)} of Definition \ref{defannealdasg}. This ends the proof.
\end{proof}
It remains to prove the following lemma.

\begin{lemma} \label{nbfinilines}
The set $V^\omega(\beta)$ is almost surely finite for any $\beta \in [0,T]$. 
\end{lemma}
\begin{proof}
We keep using the notation introduced in the proof of Proposition \ref{eqasg}. For $\delta>0$, we consider the environment $\om^\delta$ defined via \eqref{defomdelta}. We couple the sets of virtual lines $\Vs^\om$ and $\Vs^{\om^\delta}$ associated to $\om$ and $\om^\delta$, respectively, by using the same random {sets $\Lambda_{\rm{sel}}$ and $\mathcal{U}_{\tilde \omega}$ (note that for $\beta \in I_{\tilde \omega}$ with $\Delta \tilde \om(\beta)<\delta$, $\Delta \tilde \om^\delta(\beta)=0<U_i(\beta)$). 
Let $N_T^\omega(\beta)\coloneqq |V^\om(\beta)|$ and $N_T^{\omega^\delta}(\beta)\coloneqq |V^{\om^\delta}(\beta)|$, $\beta\in[0,T]$. Since $\beta\mapsto N_T^\om(\beta)$ is non-decreasing, it is enough to prove that $N_T^\om(T)<\infty$ almost surely. From the construction of the set of virtual lines, it follows that $N_T^{\omega^{\delta}}(\beta)$ increases almost surely to $N_T^{\omega}(\beta)$ as $\delta\to 0$.} By the monotone converge theorem we get for all $\beta \in [0,T]$: 
\begin{eqnarray}
\lim_{\delta \rightarrow 0} \mathbb{E}[N_T^{\omega^{\delta}}(\beta)\mid N_T^{\omega^{\delta}}(0)=n] = \mathbb{E}[N_T^{\omega}(\beta)\mid N_T^{\omega}(0)=n]. \label{monotcv}
\end{eqnarray}
Recall that $\tilde{\om}^\delta(\beta)\defeq \om^\delta(T)-\om^\delta((T-\beta)-)$, $\beta\in[0,T]$, and that $\tilde \omega^{\delta}$ has finitely many jumps in $[0,T]$. Let $T_1 <\cdots < T_N$ be the jump times of $\tilde \omega^{\delta}$. The process $(N_T^{\omega^{\delta}}(\beta))_{\beta \in [0,T]}$ has the following transitions:
\begin{enumerate}
 \item on $(T_i,T_{i+1})$: $N_T^{\omega^{\delta}}$ jumps from $k$ to $k+1$ at rate $k\sigma,$
 \item at time $T_i$: $N_T^{\omega^{\delta}}$ jumps from $k$ to $k+  B_{k}(\Delta \tilde \omega^{\delta}(T_i))$, where $B_{k}(\Delta \tilde \omega^{\delta}(T_i)) \sim \bindist{k}{\Delta \tilde \omega^{\delta}(T_i)}$.
\end{enumerate}
Note that for each $T_i$ we have $\Delta \tilde \omega^{\delta}(T_i)=\Delta \tilde \omega(T_i)$. This yields in particular 
\begin{eqnarray}
\mathbb{E}[N_T^{\omega^{\delta}}(T_i)\mid N_T^{\omega^{\delta}}(T_i-)]=(1+\Delta \tilde \omega(T_i))N_T^{\omega^{\delta}}(T_i-). \label{expectationatjumps}
\end{eqnarray}
Using successively Lemma \ref{expectationbetweenjumps} (see below) and \eqref{expectationatjumps} we get 
\[ \mathbb{E}[N_T^{\omega^{\delta}}(T)\mid N_T^{\omega^{\delta}}(0)=n]\leq ne^{\sigma T} \prod_{\beta \in [0,T]: \Delta \tilde \omega (\beta) \geq \delta}(1+\Delta \tilde \omega(\beta)). \]
In particular, we have 
\begin{eqnarray}
\mathbb{E}[N_T^{\omega^{\delta}}(T)\mid N_T^{\omega^{\delta}}(0)=n]\leq ne^{\sigma T} \prod_{\beta \in I_{\tilde \omega} \cap [0,T]}(1+\Delta \tilde \omega(\beta)) < \infty. \label{expectationatt}
\end{eqnarray}
Letting $\delta$ go to $0$ in \eqref{expectationatt} and using \eqref{monotcv} we get 
\[ \mathbb{E}[N_T^{\omega}(T)\mid N_T^{\omega}(0)=n]\leq ne^{\sigma T} \prod_{\beta \in I_{\tilde \omega} \cap [0,T]}(1+\Delta \tilde \omega(\beta)) < \infty. \]
This concludes the proof. 
\end{proof}
\begin{lemma} \label{expectationbetweenjumps}
Let $0 \leq \beta_1 <\beta_2 \leq T$ be such that $\tilde \omega^{\delta}$ has no jump times on $(\beta_1,\beta_2]$. Then, we have \[\mathbb{E}[N_T^{\omega^\delta}(\beta_2)\mid N_T^{\omega^\delta}(\beta_1)] \leq e^{\sigma (\beta_2-\beta_1)} N_T^{\omega^\delta}(\beta_1).\]
\end{lemma}

\begin{proof}
Since $\tilde \omega^{\delta}$ has no jump times on $(\beta_1,\beta_2]$, on this interval $N_T^{\omega^\delta}$ is the Markov chain on $\mathbb{N}$ with generator 
$\mathcal{G}_N f(n) = \sigma n (f(n+1)-f(n)).$
Let {$f_M(n)\coloneqq n \wedge M$.} Note that, for any $M,n \geq 1$, we have $\mathcal{G}_N f_M(n) \leq \sigma f_M(n)$. Applying Dynkin's formula to $N_T^{\omega^\delta}$ on $(\beta_1,\beta_2]$ with the function $f_M$ we obtain
\begin{align*}
\mathbb{E}\left[f_M(N_T^{\omega^\delta}(\beta_2)) \mid N_T^{\omega^\delta}(\beta_1)\right]&=f_M(N_T^{\omega^\delta}(\beta_1))+\mathbb{E}\left[\int_{\beta_1}^{\beta_2} \mathcal{G}_N f_M(N_T^{\omega^\delta}(\beta))\dd \beta\mid N_T^{\omega^\delta}(\beta_1)\right] \\
&\leq f_M(N_T^{\omega^\delta}(\beta_1))+\sigma \mathbb{E}\left[\int_{\beta_1}^{\beta_2} f_M(N_T^{\omega^\delta}(\beta))\dd \beta\mid N_T^{\omega^\delta}(\beta_1)\right] \\
&= f_M(N_T^{\omega^\delta}(\beta_1))+\sigma \int_{\beta_1}^{\beta_2} \mathbb{E}\left[f_M(N_T^{\omega^\delta}(\beta))\mid N_T^{\omega^\delta}(\beta_1)\right] \dd \beta. 
\end{align*}
By Gronwall's lemma we deduce that $\mathbb{E}[f_M(N_T^{\omega^\delta}(\beta_2)) \mid N_T^{\omega^\delta}(\beta_1)]\leq e^{\sigma (\beta_2-\beta_1)} f_M(N_T^{\omega^\delta}(\beta_1))$. The result follows letting $M\to\infty$ and using the monotone convergence theorem. 
\end{proof}

\subsection{Definitions related to Section \ref{s25}: the killed ASG}\label{s42}
The k-ASG as a branching-coalescing system of particles is defined as follows (see Fig. \ref{fig:backfor}).
\begin{definition}[The annealed{/quenched} k-ASG]\label{defkilledasgannealed}
The \emph{annealed k-ASG} with parameters $\sigma,\theta,\nu_0,\nu_1$, and environment driven by a pure-jump subordinator with L\'evy measure $\mu$, of a sample of size $n$ is the branching-coalescing particle system $\bar{\Gs}\defeq(\bar{\Gs}(\beta))_{\beta\geq 0}$ starting with $n$ lines and with the following dynamic.
\begin{itemize}
 \item[(i)] {Each} line splits into two lines, an incoming line and a continuing line, at rate  $\sigma$.
 \item[(ii)] {Every} given pair of lines {coalesces} into a single line at rate $2$.
 \item[(iii)] {Every} group of $k$ lines is subject to a simultaneous branching at rate $\sigma_{m,k}$ {(defined in Eq. \eqref{smk}),} where $m$ denotes the total number of lines in the ASG before the simultaneous branching event. At the simultaneous branching event, each line in the group involved splits into two lines, an incoming line and a continuing line.
 \item[(iv)] {Each} line is killed at rate $\theta \nu_1$.
 \item[(v)] {Each} line sends the process to the cemetery state $\dagger$ at rate $\theta \nu_0$.
\end{itemize}
{Let $\om\in\Db^\star$. The \emph{quenched k-ASG} with parameters $\sigma,\theta,\nu_0,\nu_1$, and environment $\om$, of a sample of size $n$ at time $T$ is the branching-coalescing particle system $\bar{\Gs}_{T}^\om\defeq(\bar{\Gs}_{T}^{\om}(\beta))_{\beta\geq 0}$ starting at $\beta=0-$ with $n$ lines and evolving according to (i), (ii), (iv) and (v) of the previous definition and replacing (iii) by
\begin{itemize}
 \item[(iii')] {If} at time $\beta$, we have $\Delta \om(T-\beta)>0$, then any line splits into two lines, an incoming line and a continuing line, with probability $\Delta \om(T-\beta)$, independently from the other lines.
\end{itemize}}
\end{definition}
\begin{remark}
The branching-coalescing system underlying the quenched k-ASG is well-defined as it can be constructed on the basis of the quenched ASG.
\end{remark}
\subsection{Definitions related to Section \ref{s26}: the pruned lookdown ASG}\label{s43}
In this section, we give a detailed construction of the pLD-ASG, which incorporates the effect of the environment.
\smallskip

First, we construct the (annealed/quenched) \textit{lookdown ASG} (LD-ASG). The latter is the ASG equipped with a numbering of its lines   encoding the hierarchy given by the pecking order. This is done as follows. Consider a realization of the (annealed/quenched) ASG in $[0,T]$ starting with one line, which is assigned level $1$. When the line at level $i$ coalesces with the line at level $j>i$, the resulting line is assigned level $i$; the level of each line having level $k > j$ before the coalescence is decreased by $1$. When a group of lines with levels $i_1< i_2<\ldots<i_N$ experiences a simultaneous branching, the incoming (resp. continuing) line of the descendant line with level $i_k$ gets level $i_k+k-1$ (resp. $i_k+k$), respectively; a line having level $j$ before the branching, with $i_k<j<i_{k+1}$ gets level $j+k$; a line having level $j>i_N$ before the branching gets level $j+N$. Mutations do not affect the levels. See Fig. \ref{LDASG}(left) for an illustration.
 \smallskip
 \begin{figure}[t!]
    \begin{center}
		\begin{minipage}{0.5 \textwidth}
		\centering
    \scalebox{0.6}{\begin{tikzpicture}
       \draw[dashed,thick,opacity=0.3] (-0.5,-0.5) --(-0.5,4.5);
        \draw[dashed,thick,opacity=0.3] (8.5,-0.5) --(8.5,4.5);
        \draw[dashed,thick,opacity=0.3] (0.8,-0.5) --(0.8,0) (0.8,1)--(0.8,4.5);
        \draw[dashed,thick,opacity=0.3] (2,-0.5) --(2,1) (2,2)--(2,3) (2,4)--(2,4.5);
        \draw[dashed,thick,opacity=0.3] (6.2,-0.5) --(6.2,0) (6.2,1) --(6.2,2) (6.2,4)--(6.2,4.5);
        \draw[dashed,thick,opacity=0.3] (7.5,-0.5) --(7.5,1) (7.5,2) --(7.5,4.5) ;
        \draw[dashed,thick,opacity=0.3] (4.5,-0.5) --(4.5,1) (4.5,2) --(4.5,4.5) ;
        \node [right] at (-0.8,-0.9) {$T$};
        \node [right] at (8.3,-0.9) {$0$};
       \node [right] at (3.5,-1.5) {$\beta$};
       
        \draw[-{triangle 45[scale=5]}] (4.5,-1.2) -- (2.5,-1.2) node[text=black, pos=.6, xshift=7pt]{}; 
        
  \node [above] at (8.3,1) {$1$};
  \node [above] at (7.2,2) {$1$};
  \node [above] at (7.2,1) {$2$};
  \node [above] at (5.8,0) {$3$};
  \node [above] at (5.8,1) {$4$};
  \node [above] at (5.8,2) {$2$};
  \node [above] at (5.8,4) {$1$};
  \node [above] at (4.2,0) {$3$};
  \node [above] at (4.2,2) {$2$};
  \node [above] at (4.2,4) {$1$};
  \node [above] at (1.7,0) {$5$};
  \node [above] at (1.7,1) {$3$};
  \node [above] at (1.7,2) {$4$};
  \node [above] at (1.7,3) {$1$};
  \node [above] at (1.7,4) {$2$};
  
  \node [above] at (0.5,1) {$3$};
  \node [above] at (0.5,2) {$4$};
  \node [above] at (0.5,3) {$1$};
  \node [above] at (0.5,4) {$2$};
        \draw[thick] (-0.5,4) -- (6.2,4);
        \draw[thick] (-0.5,2) -- (7.5,2);
        \draw[thick] (4.5,1) -- (8.5,1);
        \draw[thick] (0.8,0) -- (6.2,0);
        \draw[thick] (2,1) -- (-0.5,1);
        \draw[thick] (2,3) -- (-0.5,3);
        \draw[-{triangle 45[scale=5]},thick] (4.5,2) -- (4.5,1) node[text=black, pos=.6, xshift=7pt]{};
        \draw[-{triangle 45[scale=5]},thick] (0.8,1) -- (0.8,0);
        \draw[-{open triangle 45[scale=5]},thick] (7.5,2) -- (7.5,1);
        
        \draw[-{open triangle 45[scale=5]},thick] (2,3) -- (2,4);
        \draw[-{open triangle 45[scale=5]},thick] (2,1) -- (2,2);
        \draw[-{open triangle 45[scale=5]},thick] (6.2,4) -- (6.2,2);
        \draw[-{open triangle 45[scale=5]},thick] (6.2,0) -- (6.2,1);
        \node[thick] at (0.2,4) {$\bigtimes$} ;
        \node[ultra thick] at (8,1) {$\bigtimes$} ;
        \draw[thick] (3.6,2) circle (1.5mm)  [fill=white!100];  
        \draw[thick] (1.5,1) circle (1.5mm)  [fill=white!100]; 
        \end{tikzpicture}    }
        \end{minipage}\begin{minipage}{0.45 \textwidth}
        \centering
    \scalebox{0.6}{\begin{tikzpicture}   
        \draw[dashed,thick,opacity=0.3] (-0.5,-0.5) --(-0.5,4.5);
        \draw[dashed,thick,opacity=0.3] (8.5,-0.5) --(8.5,4.5);
       
        \draw[dashed,thick,opacity=0.3] (0.2,-0.5) --(0.2,4.5) ;
        \draw[dashed,thick,opacity=0.3] (1.5,-0.5) --(1.5,4.5) ;
        \draw[dashed,thick,opacity=0.3] (3.6,-0.5) --(3.6,4.5) ;
        \draw[dashed,thick,opacity=0.3] (8,-0.5) --(8,4.5) ;
        
        \node [right] at (-0.8,-0.9) {$T$};
        \node [right] at (-0.1,-0.9) {$\beta_4$};
        \node [right] at (1.2,-0.9) {$\beta_3$};
        \node [right] at (3.3,-0.9) {$\beta_2$};
        \node [right] at (7.7,-0.9) {$\beta_1$};
        \node [right] at (8.3,-0.9) {$0$};
        \node [right] at (3.5,-1.5) {$\beta$};
       
        \draw[-{triangle 45[scale=5]}] (4.5,-1.2) -- (2.5,-1.2) node[text=black, pos=.6, xshift=7pt]{}; 
  \node [above] at (8.3,1) {$1$};
  \node [above] at (7.2,2) {$1$};
  \node [above] at (7.2,1) {$2$};
  \node [above] at (5.8,0) {$3$};
  \node [above] at (5.8,1) {$4$};
  \node [above] at (5.8,2) {$2$};
  \node [above] at (5.8,4) {$1$};
  \node [above] at (4.2,0) {$3$};
  \node [above] at (4.2,2) {$2$};
  \node [above] at (4.2,4) {$1$};
  \node [above] at (1.7,1) {$3$};
  \node [above] at (1.7,2) {$4$};
  \node [above] at (1.7,3) {$1$};
  \node [above] at (1.7,4) {$2$};
  
  \node [above] at (0.5,1) {$3$};
  \node [above] at (0.5,3) {$1$};
  \node [above] at (0.5,4) {$2$};
  \node [above] at (7.8,1) {$1$};
  
  \node [above] at (3.3,4) {$1$};
  \node [above] at (3.3,2) {$2$};
  
   \node [above] at (1.2,4) {$2$};
   \node [above] at (1.2,3) {$1$};
  \node [above] at (1.2,1) {$3$};
  
  \node [above] at (0,3) {$1$};
  \node [above] at (0,1) {$2$};

        \draw[semithick] (3.6,4) -- (6.2,4);
        \draw[semithick] (0.2,4) -- (6.2,4);
        \draw[line width=2.5pt] (1.5,2) -- (4.5,2);
        \draw[semithick] (4.5,2) -- (7.5,2);
        \draw[line width=2.5pt] (4.5,1) -- (8.5,1);
        \draw[semithick] (3.6,0) -- (6.2,0);
        \draw[semithick] (2,1) -- (1.5,1);
        \draw[line width=2.5pt] (-0.5,1) -- (1.5,1);
        \draw[semithick] (2,3) -- (-0.5,3);
        \draw[-{triangle 45[scale=5]},thick] (4.5,2) -- (4.5,1) node[text=black, pos=.6, xshift=7pt]{};
        \draw[-{open triangle 45[scale=5]},thick] (7.5,2) -- (7.5,1);
        \draw[-{open triangle 45[scale=5]},thick] (2,3) -- (2,4);
        \draw[-{open triangle 45[scale=5]},thick] (2,1) -- (2,2);
        \draw[-{open triangle 45[scale=5]},thick] (6.2,4) -- (6.2,2);
        \draw[-{open triangle 45[scale=5]},thick] (6.2,0) -- (6.2,1);
        \node[thick] at (0.2,4) {$\bigtimes$} ;
        \node[ultra thick] at (8,1) {$\bigtimes$} ;
        \draw[thick] (3.6,2) circle (1.5mm)  [fill=white!100];  
        \draw[thick] (1.5,1) circle (1.5mm)  [fill=white!100]; 
        \end{tikzpicture}    }
        \end{minipage}
        \end{center}
    \caption{LD-ASG (left) and its pLD-ASG (right). Backward time $\beta \in [0,T]$ runs from right to left. In the LD-ASG levels remain constant between the dashed lines, in particular, they are not affected by mutation events. In the pLD-ASG, lines are pruned at mutation events, where an additional updating of the levels takes place. The bold line in the pLD-ASG represents the immune line.}
    \label{LDASG}
\end{figure}
The pLD-ASG is obtained via an appropriate pruning of the lines of the LD-ASG. 
Before describing the pruning procedure, we identify a special line in the LD-ASG: \textit{the immune line}. The immune line at time $\beta$ is the line in the ASG present at time $\beta$ that is the ancestor of the starting line if all the lines at time $\beta$ are assigned the unfit type. {In the absence of mutations, the immune line changes only if it is involved in a coalescence or branching event. If it is involved into a coalescence event, the merged line is the new immune line. If it is involved into a branching event, the continuing line is the new immune line.}
\smallskip

In the presence of mutations, the pLD-ASG is constructed simultaneously with the immune line as follows. Let $\beta_1<\cdots<\beta_m$ be the times at which mutations occur in the LD-ASG in $[0,T]$. In the time interval $[0,\beta_1)$ the pLD-ASG coincides with the LD-ASG and the immune line evolves as before. Now, assume that we have constructed the pLD-ASG together with its immune line up to time $\beta_i-$, where the pLD-ASG contains $n$ lines and the immune line has level $k_0\in[n]$. The pLD-ASG is extended up to time $\beta_i$ according to the following rules:
\begin{itemize}
\item[(i)] If at time $\beta_i$, a line with level $k\neq k_0$ at $\beta_i-$, is hit by a deleterious mutation, we stop tracing back this line; all the other lines are extended up to time $\beta_i$; all the lines with level $j>k$ at time $\beta_i-$ decrease their level by $1$ and the others keep their levels unchanged; the immune line continues on the same line (possibly with a different level).
\item[(ii)] If at time $\beta_i-$, the line with level $k_0$ at $\beta_i-$ is hit by a deleterious mutation, we extend all the lines up to time $\beta_i$; the immune line gets level $n$, but remains on the same line; all the lines having at time $\beta_i-$ a level $j>k_0$ decrease their level by $1$, the others keep their levels unchanged.  
\item[(iii)] If at time $\beta_i$, a line with level $k$ is hit by a beneficial mutation, we stop tracing back all the lines with level $j>k$; the remaining lines are extended up to time $\beta_i$, keeping their levels; the line hit by the mutation becomes the immune line.
\end{itemize}
In $[\beta_i,\beta_{i+1})$, $i\in[m-1]$, and $[\beta_m,T]$, the pLD-ASG evolves as the LD-ASG, and the immune line as in the case without mutations. The next result states the main feature of the pLD-ASG.
\begin{lemma}\label{pruningdonnehtx}
If we assign types at (backward) time $T$ in the pLD-ASG, the true ancestor of the single line at (backward) time $0$ is the line of type $0$ with smallest level or, if all lines have type $1$, it is the immune line. 
\end{lemma}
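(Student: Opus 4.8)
The plan is to prove a slightly stronger statement by backward induction on the backward time $\beta$, running from $\beta=0$ (the root) up to $\beta=T$, and then to read off the lemma by specializing to $\beta=T$. First I would record the invariant. Fix a realization of the pruned LD-ASG and, for each $\beta\in[0,T]$, consider an arbitrary assignment of types to the lines present at time $\beta$. Propagating these types forward in time (decreasing $\beta$) along the pruned LD-ASG according to the pecking order of Figure~\ref{fig:peckingorder} determines a unique ancestor at time $\beta$ of the single line at $\beta=0$. The invariant I would prove is: \emph{this ancestor is the type-$0$ line of smallest level present at time $\beta$, and, if all lines carry type $1$, it is the immune line}; taking $\beta=T$ yields the lemma. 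A preliminary observation, used throughout, is that pruned lines are never ancestral: a deleterious mutation forces its line to type $1$, which has the lowest pecking-order priority, while a beneficial mutation forces a low-level line to type $0$, which by the pecking order shields (takes precedence over) every higher line. This is precisely why the pruning rules of Section~\ref{pldasg} retain the true ancestor, and it legitimizes working inside the pruned LD-ASG rather than the full ASG.

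The base case $\beta=0$ is immediate: there is a single line, carrying level $1$, which is simultaneously the lowest-level line and the immune line, so the invariant holds whether its type is $0$ or $1$. Since the set of lines, their levels, and the immune line are all constant between consecutive graphical events, the invariant is preserved on each such interval for free, and it suffices to check that it is preserved across each event as $\beta$ increases. I would treat the events case by case, in each case comparing the ``lowest type-$0$/immune'' line just after the event (larger $\beta$) with the one just before (smaller $\beta$), using the forward type-propagation across the event together with the induction hypothesis applied at the smaller time. The coalescence case and the single (binary) selective branching case are exactly as in \cite{LKBW15}: the level-update rule has been designed so that the pecking-order priority order of the lines coincides with their level order, so the lowest type-$0$ line (or the immune line) is mapped to the lowest type-$0$ line (or the immune line) at the earlier time. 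The mutation cases use the preliminary observation: at a beneficial mutation on level $k$ the hit line is forced to type $0$ and becomes immune while all lines of level $>k$ are shielded, hence pruned; at a deleterious mutation on a non-immune line of level $k$ the line is forced to type $1$ and pruned, with the levels above it sliding down by one; and at a deleterious mutation on the immune line the line is forced to type $1$, so it loses all priority and is relabelled to the top level $N$ while remaining the fallback ancestor, consistently with the invariant.

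The genuinely new ingredient relative to \cite{LKBW15}, and the step I expect to be the main obstacle, is the \emph{simultaneous branching} event coming from the environment, in which a whole group of lines with levels $i_1<\dots<i_N$ branch at once. Here one must verify that the joint level-update rule of Section~\ref{pldasg} --- the incoming and continuing lines of the descendant at level $i_k$ receiving levels $i_k+k-1$ and $i_k+k$, a line of level $j$ with $i_k<j<i_{k+1}$ receiving level $j+k$, and a line of level $j>i_N$ receiving level $j+N$ --- reproduces, forward in time, the nested pecking order among all the newly created lines. The clean way to do this is to check that the resulting level order is exactly the priority order obtained by resolving the simultaneous reproduction: for each branching descendant its incoming line takes precedence over its continuing line, and distinct descendants keep their relative order. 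An induction on $N$ (equivalently, a verification that performing the $N$ branchings sequentially in order of increasing level yields the same final levels) reduces this to the already-established binary case. Once this combinatorial identity between level order and pecking-order priority is in place, the induction step for simultaneous branchings goes through exactly as for a single branching, completing the induction and hence the proof.
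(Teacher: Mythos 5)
Your proposal is correct and follows essentially the route the paper itself indicates: it adapts the inductive proof of Theorem 4 in \cite{LKBW15} event by event, and handles the only genuinely new event --- the simultaneous branching --- by checking that the joint level-update rule coincides with performing the binary branchings sequentially in increasing level order, which is exactly the paper's stated idea of ``treating simultaneous branching events locally as simple branching events.'' The paper gives no further detail beyond that remark, so your write-up is simply a fleshed-out version of the same argument.
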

\begin{proof}
 The proof is analogous to the proof of {\cite[Theorem 4]{LKBW15} which covers the null environment case.}
\end{proof}
\section{Annealed results}\label{S5}
\subsection{Annealed results related to Section \ref{s25} }\label{s51}
{We start this section with the proof of the first part of Theorem \ref{thm2.3}, namely Eqs. \eqref{rmd} and \eqref{md}.}
\begin{proof} [Proof of Theorem \ref{thm2.3}(Part I: reinforced and annealed moment duality)]
Let $H:[0,1]\times \mathbb{N}_0^\dagger\times [0,\infty)$ defined via ${H(x,n,j)\defeq}(1-x)^nf(j)$. Let $(P_t)_{t\geq0}$ and $(Q_t)_{t\geq 0} $ denote the semigroups of $(X,J)$ and $(R,J)$, respectively, i.e. 
\[P_t g(x,j)=\E[g(X(t),J(t)+j)\mid X(0)=x]\quad\textrm{and}\quad Q_th(n,j)=\Eb[h(R(t),J(t)+j)\mid R(0)=n].\] 
Let $(\hat{R},\hat{J})$ be a copy of $(R,J)$, which is independent of $(X,J)$. A straightforward calculation shows that
\begin{equation}\label{conmuta}
P_t(Q_s H)(x,n,j)={\mathbb{E}}[(1-X(t))^{\hat{R}(s)}f(J(t)+\hat{J}(s)+j)\mid X(0)=x,\, \hat{R}(0)=n]=Q_s(P_t H)(x,n,j).
\end{equation}
Let $G$ and $G_{\star}$ be the infinitesimal generators of $(X,J)$ and $(R,J)$, respectively. Clearly, for any $x\in[0,1]$, {the function} $(n,j)\mapsto P_t H(\cdot,n,\cdot)(x,j)$ belongs to the domain of $G_{\star}$. Hence, Eq. \eqref{conmuta} yields 
\begin{equation}\label{comgensemi}
 P_t G_{\star} H(x,n,j)= G_{\star} P_t H(x,n,j).
\end{equation}
We claim that 
\begin{equation}\tag{Claim 4}\label{claimgeneduality}
G H(\cdot,n,\cdot)(x,j)=G_{\star} H(x,\cdot,\cdot)(n,j). 
\end{equation}
Assume that \eqref{claimgeneduality} holds. Set ${u(t,x,n,j)\coloneqq} P_tH(\cdot,n,\cdot)(x,j)$ and ${v(t,x,n,j)\coloneqq} Q_tH(x,\cdot,\cdot)(n,j)$. The Kolmogorov forward equation for $Q$ yields
\begin{equation}\label{Kq}
\frac{\dd}{\dd t} v(t,x,n,j)=G_{\star} v(x,\cdot,\cdot)(n,j).
\end{equation}
{Moreover, using the Kolmogorov forward equation for $P$, {\eqref{claimgeneduality}} and \eqref{comgensemi}, we get
\[\frac{\dd}{\dd t} u(t,x,n,j)=P_t G H(\cdot,n,\cdot)(x,j)= P_t G_{\star} H(x,\cdot,\cdot)(n,j)= G_{\star} u(x,\cdot,\cdot)(n,j).\]

Hence, $u$ and $v$} satisfy Eq. \eqref{Kq}. Since $u(0,x,n,j)=(1-x)^nf(j)=v(0,x,n,j)$, {Eq.~\eqref{rmd} follows from the uniqueness of the initial value problem associated with $G_{\star}$ (see \cite[Thm. 1.3]{D65}). Eq.~\eqref{md}  is obtained using $f\equiv 1$ in Eq.~\eqref{rmd}. It remains to prove \eqref{claimgeneduality}.} Note first that
\begin{align}\label{gx}
G H(\cdot,n,\cdot)(x,j)=&\left[n(n-1)x(1-x)^{n-1}-\left(\sigma x(1-x)+ \theta\nu_0(1-x)-\theta\nu_1 x \right)n(1-x)^{n-1}\right]f(j)\nonumber\\
&+(1-x)^n\int_{(0,1]}\left[(1-xz)^nf(j+z)-f(j)\right]\mu(\dd z).
\end{align}
In addition,
\begin{align}\label{gr}
G_\star H(x,\cdot,\cdot)(n,j)=&n((n-1)+\theta\nu_1)[(1-x)^{n-1}\!-(1-x)^n]f(j)\nonumber\\
&+\sigma n[(1-x)^{n+1}\!-(1-x)^n]f(j)- n\theta\nu_0 (1-x)^nf(j)\nonumber\\
&+\sum\limits_{k=0}^n\binom{n}{k}\int_{(0,1)}y^k (1-y)^{n-k}[(1-x)^{n+k}f(j+y)-(1-x)^nf(j)]\mu(\dd y)\nonumber\\
=&\left[n(n-1)x(1-x)^{n-1}-\left(\sigma x(1-x)+ \theta\nu_0(1-x)-\theta\nu_1 x \right)n(1-x)^{n-1}\right]f(j)\nonumber\\
&+(1-x)^n\sum\limits_{k=0}^n\binom{n}{k}\int_{(0,1)}y^k (1-y)^{n-k}[(1-x)^kf(j+y)-f(j)]\mu(\dd y).
\end{align}
Moreover, using Fubini's theorem, we obtain
\[\sum\limits_{k=0}^n\binom{n}{k}\int_{(0,1)}y^k (1-y)^{n-k}[(1-x)^kf(j+y)-f(j)]\mu(\dd y)=\int_{(0,1)}\left[(1-xy)^nf(j+y)-f(j)\right]\mu(\dd y).
\]
{Hence, \eqref{claimgeneduality} follows after comparing \eqref{gr} with \eqref{gx}}. 
\end{proof}
{We now prove Theorem \ref{thm2.4}-(1), which characterizes the asymptotic type frequency in the annealed setting.}
\begin{proof}[Proof of Theorem \ref{thm2.4}(Asymptotic type frequency)-(1)]
We first show that $X(t)$ has a limit in distribution as $t\to \infty$. Since $\theta >0$ and $\nu_0\in (0,1)$, Eq. \eqref{md} in Theorem \ref{thm2.3} implies that, for any $x\in[0,1]$, the limit of $\mathbb{E}[(1-X(t))^n\mid X(0)=x]$ as $t\to\infty$ exists and satisfy
\begin{equation}\label{mome}
 \lim_{t \rightarrow \infty} \mathbb{E}[(1-X(t))^n|X(0)=x]=\pi_n,\quad n\in\Nb_0, 
\end{equation}
where $\pi_n$ in defined in \eqref{defpin}. Recall that probability measures on $[0,1]$ are completely determined by their positive integer moments and that convergence of positive integer moments implies convergence in distribution. Therefore, Eq. \eqref{mome} implies that there is $\eta_X\in\Ms_1([0,1])$ such that, for any $x\in[0,1]$, conditionally on $\{X(0)=x\}$, the law of $X(t)$ converges in distribution to $\eta_X$ as $t\to\infty$ and
\[\pi_n=\int_{[0,1]} (1-z)^n\eta_X(dz), \quad n\in\Nb.\]
Using dominated convergence, the convergence of the law of $X(t)$ towards $\eta_X$ as $t\to\infty$ extends to any initial distribution. As a consequence of this and the Markov property of $X$, it follows that $X$ admits a unique stationary distribution, which is given by $\eta_X$.
\smallskip

Finally, a first step decomposition for the probability of absorption in $0$ of the process $R$ yields 
\[{\left[n(\sigma+\theta+ n-1)+\sum_{k=1}^n \binom{n}{k}\sigma_{n,k}\right]} \pi_n= n\sigma \pi_{n+1}+ n(\theta\nu_1+ n-1)\pi_{n-1}+{\sum\limits_{k=1}^n \binom{n}{k}\sigma_{n,k} \pi_{n+k}}.\]
Dividing both sides in the previous identity by $n$ and rearranging terms yields Eq. \eqref{recwn}.   
\end{proof}
\subsection{Annealed results related to Section \ref{s26}}\label{s52}
{In this section we prove Theorem \ref{thm2.6}-(1) and Corollary \ref{cor2.7}. Before that we prove the following lemma relating the ancestral type distribution at time $T$ to the number $L(T)$ of lines in the pLD-ASG at time $T$. }
 
 \begin{lemma}\label{exprhtannealed}
For all $T \geq 0$ and $x\in[0,1]$, we have
\begin{equation}
 h_T(x)=1-\mathbb{E}[(1-x)^{L(T)}\mid L(0)=1].\label{pldasgatd}
 \end{equation}
\end{lemma}
\begin{proof} {Since types are assigned to the $L(T)$ lines present in the pLD-ASG at {(backward)} time $T$ according to independent Bernoulli random variables with parameter $x$, the result follows from Lemma \ref{pruningdonnehtx}.}
\end{proof}
{The next result is crucial to describe the asymptotic behavior of $h_T(x)$ as $T\to\infty$.}
\begin{lemma}[Positive recurrence]\label{pr}
The process $L$ is positive recurrent. 
\end{lemma}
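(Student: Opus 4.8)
The plan is to establish positive recurrence through a Foster--Lyapunov (Meyn--Tweedie) drift criterion, taking as test function the linear map $V(i)\coloneqq i$ on the state space $\Nb$. Three ingredients are required: irreducibility of $L$, non-explosivity, and a uniformly negative drift of $V$ outside a finite set. Irreducibility on $\Nb$ is immediate from \eqref{kratespldasg}: for $i\geq 2$ the down-rate to $i-1$ is at least $i(i-1)>0$, and for $i\geq 1$ the up-rate to $i+1$ equals $i(\sigma+\sigma_{i,1})$, which is positive as soon as $\sigma>0$ or $\mu\neq 0$ (in the degenerate case $\sigma=0$, $\mu=0$ the chain is non-increasing and absorbed at $1$, hence trivially positive recurrent, so I assume henceforth the non-degenerate regime).

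The core of the argument is the computation of the drift $Q^\mu V(i)=\sum_{j\neq i}q^\mu(i,j)(j-i)$. For the upward part, the single branchings contribute $i\sigma+i\sigma_{i,1}$ and the simultaneous branchings contribute $\sum_{k=2}^{i}k\binom{i}{k}\sigma_{i,k}$; combining these and applying Tonelli together with the binomial-mean identity $\sum_{k=0}^{i}k\binom{i}{k}y^{k}(1-y)^{i-k}=iy$ yields
\begin{equation*}
i\sigma+\sum_{k=1}^{i}k\binom{i}{k}\sigma_{i,k}=i\sigma+\int_{(0,1)}iy\,\mu(\dd y)=i(\sigma+\Theta),\qquad \Theta\coloneqq\int_{(0,1)}y\,\mu(\dd y).
\end{equation*}
The crucial feature is that this upward drift is only \emph{linear} in $i$ and finite, precisely because $\Theta<\infty$ by the standing assumption on $\mu$. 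For the downward part, the jump to $i-1$ contributes $-[i(i-1)+(i-1)\theta\nu_1+\theta\nu_0]$, while the beneficial-mutation prunings to the levels $1\leq j<i-1$ contribute $\theta\nu_0\sum_{j=1}^{i-2}(j-i)=-\theta\nu_0\tfrac{(i-2)(i+1)}{2}$. Collecting terms,
\begin{equation*}
Q^\mu V(i)=i(\sigma+\Theta)-i(i-1)-(i-1)\theta\nu_1-\theta\nu_0-\theta\nu_0\tfrac{(i-2)(i+1)}{2}=-\Big(1+\tfrac{\theta\nu_0}{2}\Big)i^2+O(i),
\end{equation*}
so that $Q^\mu V(i)\to-\infty$ as $i\to\infty$.

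It then remains to assemble the criterion. Since every term of $\sum_{j\neq i}q^\mu(i,j)\,|j-i|$ is finite, $Q^\mu V$ is well-defined; moreover $Q^\mu V$ is bounded above on all of $\Nb$ (it is negative for large $i$ and takes finitely many values on any finite set), so a linear bound $Q^\mu V\leq cV+d$ holds for suitable constants, which I will use to deduce non-explosivity of $L$. The divergence $Q^\mu V(i)\to-\infty$ furnishes a finite set $F=\{1,\dots,i_0\}$ and an $\epsilon>0$ with $Q^\mu V(i)\leq-\epsilon$ for $i\notin F$. Irreducibility, non-explosivity and this drift inequality together give positive recurrence by the Foster--Lyapunov theorem. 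The one point that demands genuine care --- and which I regard as the main obstacle --- is controlling the environmental upward drift: a priori the simultaneous branchings could inflate the mean up-jump when $\mu$ is infinite near $0$, but the identity above shows that the mean up-jump equals exactly $i\Theta$, hence is tamed by $\int y\,\mu(\dd y)<\infty$ and dominated by the quadratic coalescence drift $-i(i-1)$. Everything else is routine.
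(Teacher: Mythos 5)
Your argument is correct, and the drift computation checks out: the mean environmental up-jump from state $i$ is exactly $\sum_{k=1}^i k\binom{i}{k}\sigma_{i,k}=i\int_{(0,1)}y\,\mu(\dd y)$, which is linear in $i$ and finite by the standing hypothesis on $\mu$, hence swamped by the quadratic coalescence term $-i(i-1)$ (with an extra $-\theta\nu_0 i^2/2$ from the prunings when $\theta\nu_0>0$). The route, however, is genuinely different from the paper's. The paper splits into two cases: when $\theta\nu_0>0$ it avoids Lyapunov functions altogether, observing that from every state $i\geq 2$ there is a jump to state $1$ at rate at least $\theta\nu_0$, so the hitting time of $1$ is stochastically dominated by an exponential clock of rate $\theta\nu_0$; when $\theta=0$ it uses a \emph{bounded} Lyapunov function $f(n)=\sum_{i=1}^{n-1}\tfrac1i\ln(1+\tfrac1i)$ with $G_Lf(n)\leq -1/2$ for large $n$, so that Dynkin's formula immediately yields $\Eb[T_{m_0}]\leq 2f(n)\leq 2\lVert f\rVert_\infty<\infty$. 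The boundedness of $f$ is the point of that choice: it keeps the argument self-contained, gives a bound on the expected hitting time that is uniform in the starting state, and minimizes the integrability bookkeeping that an unbounded test function entails. Your unified approach with $V(i)=i$ buys a single clean statement covering all parameter regimes and an explicit quadratic drift, but it shifts the burden onto the continuous-time Foster--Lyapunov machinery: you must justify Dynkin's formula for the unbounded $V$ on a chain with unbounded rates (localize on exit times of $[n]$, use $V\geq 0$ and monotone convergence, and invoke non-explosivity to let $n\to\infty$), and non-explosivity itself must be supplied, which your linear bound $Q^\mu V\leq(\sigma+\int y\,\mu(\dd y))V$ does provide. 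These steps are standard and you name them, so I regard the proof as complete in outline; your handling of the degenerate case $\sigma=0$, $\mu=0$ (where the chain is not irreducible and is absorbed at $1$) is in fact more careful than the paper's, which asserts irreducibility without comment.
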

\begin{proof}
Since {$L$ is irreducible}, it is enough to prove that the state $1$ is positive recurrent. This holds if $\theta\nu_0>0$, because in this case the hitting time of $1$ is upper bounded by an exponential random variable with parameter $\theta\nu_0$. Now, assume that $\theta=0$ (the case $\theta\nu_0=0$ and $\theta\nu_1>0$, can be easily reduced to this case). We proceed in a similar way as in \cite[Proof of Lem. 2.3]{F13}. Define the function $f:\Nb\to\Rb_+$ via
\[f(n)\coloneqq \sum_{i=1}^{n-1} \frac1{i}\ln\left(1+\frac{1}{i}\right),\]
with the convention that an empty sum equals $0$. Note that $f$ is bounded. Note also that, for $n>1$,
\[n(n-1)(f(n-1)-f(n))=-n\ln\left(1+\frac{1}{n-1}\right)\leq -1.\]
This follows using $x=1/n$ in the inequality {$e^{x} < 1/(1-x)$}, which holds for $x<1$. For any $\varepsilon>0$, set ${n_0(\varepsilon)\coloneqq} \lfloor 1/\varepsilon\rfloor +1.$ Note that for $n>n_0(\varepsilon)$
\[n(f(n+i)-f(n))={n\sum_{j=n}^{n+i-1}\frac1{j}\ln\left(1+\frac{1}{j}\right)}\leq n\ln\left(1+\frac1n\right)i\varepsilon\leq i\varepsilon.\]
Hence, for {$n> n_0(\varepsilon)$,}
\begin{align*}
G_L f(n)&\leq -1 +\frac{\varepsilon}{n}\sum_{i=1}^n \binom{n}{i}\sigma_{n,i} \,i+\sigma\varepsilon=-1+\varepsilon\int_{(0,1)}\sum_{i=1}^n\binom{n-1}{i-1}y^i(1-y)^{n-i}\mu(dy)+\sigma \varepsilon\\
&=-1+\varepsilon\left(\int_{(0,1)}y\mu(dy)+\sigma\right),
\end{align*}
where $\sigma_{n,i}$  is defined in \eqref{smk}. Set $m_0\coloneqq n_0(\varepsilon_\star)$, where $\varepsilon_\star\coloneqq 1/\big(2\int_{(0,1)}y\mu(dy)+2\sigma\big)$ (and we set $m_0\coloneqq 1$ in the particular case $\mu=0$ and $\sigma=0$). In particular, for {$n> m_0$}, we have  $G_Lf(n)\leq -1/2.$
  
Define $T_{m_0}\coloneqq \inf\{\beta>0: L(\beta){\leq m_0}\}$. Applying Dynkin's formula to $L$ with the function $f$ and the stopping time $T_{m_0}\wedge k$, $k\in\Nb$, we obtain
\[\Eb\left[f(L(T_{m_0}\wedge k))\mid L(0)=n\right]=f(n)+\Eb\left[\int_0^{T_{m_0}\wedge k}G_Lf(L(\beta))d\beta\mid L(0)=n\right].\]
Therefore, for {$n> m_0$}, we have
\[0\leq \Eb\left[f(L(T_{m_0}\wedge k))\mid L(0)=n\right]\leq f(n)-\frac1{2}\Eb[T_{m_0}\wedge k\mid L(0)=n].\]
Hence,
$\Eb[T_{m_0}\wedge k\mid L(0)=n]\leq 2f(n).$
Letting $k\to\infty$ yields 
$\Eb[T_{m_0}\mid L(0)=n]\leq 2f(n)<\infty.$
Since $L$ is irreducible, the result follows by standard arguments.
\end{proof}

The first part of the proof of Theorem \ref{thm2.6}-(1) builds on the previous two lemmas. The system of equations \eqref{fr} characterizing the tail probabilities $\mathbb{P}(L(\infty) > n)$ is obtained via Siegmund duality. More precisely, consider the continuous-time Markov chain $D\coloneqq (D(\beta))_{\beta \geq 0}$ with values in $\Nb^\dagger\defeq \Nb\cup\{\dagger\}$ with rates
\[q_D(i,j)\coloneqq \left\{\begin{array}{ll}
            (i-1)(\sigma + \sigma_{i-1,1})&\textrm{if $j=i-1$, $i>1$},\\
            (i-1)\theta\nu_1 +i(i-1)   &\textrm{if $j=i+1$, $i>1$},\\
            \gamma_{i,j}-\gamma_{i,j-1} &\textrm{if {$ 1\leq j< i$}, $i>2$},\\
            (i-1)\theta\nu_0&\textrm{if $j=\dagger$, $i>1$,}
            \end{array}\right.
\]
where we recall that $\gamma_{i,j}\coloneqq \sum_{k=i-j}^{j}\binom{j}{k}\sigma_{j,k}$ {if $1 \leq j<i\leq 2j$ and $\gamma_{i,j}\coloneqq 0$ otherwise,} and $\dagger$ is a cemetery point. Note that $1$ and $\dagger$ are absorbing states of $D$. {The next result relates $L$ and $D$ via duality.} 
\begin{lemma}[Siegmund duality]\label{sd}
 The processes $L$ and $D$ are Siegmund dual, i.e. 
\[\mathbb{P}\left(L(\beta) \geq d\mid L(0)=\ell\right)=\mathbb{P}\left(\ell\geq D(\beta) \mid D(0)=d\right),\qquad \textrm{for all }\ell, d\in \mathbb{N}, t\geq0.\]
\end{lemma}
\begin{proof}
 We consider the function $H:\mathbb{N}\times\mathbb{N}\cup\{\dagger\}\rightarrow \{0,1\}$ defined via $H(\ell,d)\coloneqq 1_{\{\ell\geq d\}}$ (i.e. $H(\ell,d)=1$ if $\ell\geq d$ and $H(\ell,d)=0$ otherwise) and $H(\ell,\dagger)\coloneqq 0$, $\ell,d\in\mathbb{N}$. Let $G_L$ and $G_D$ be the infinitesimal generators of $L$ and $D$, respectively. By \cite[Prop. 1.2]{Jaku} we only have to show that $G_L H(\cdot,d)(\ell)=G_D H(\ell,\cdot)(d)$ for all $\ell,d\in\mathbb{N}$. {From \eqref{kratespldasg}}, we have
 \begin{align}\label{gl}
  G_L H(\cdot,d)(\ell)&=\sigma \ell \,1_{\{\ell+1=d\}} - (\ell-1)(\ell+\theta\nu_1)\,1_{\{\ell=d\}} -\theta\nu_0 \sum\limits_{j=1}^{\ell-1} 1_{\{j<d\leq \ell\}}+ \sum\limits_{k=1}^\ell\binom{\ell}{k}\sigma_{\ell,k} \, 1_{\{\ell< d\leq \ell+k\}}\nonumber\\
  &=\sigma \ell \,1_{\{\ell+1=d\}} - (\ell-1)(\ell+\theta\nu_1)\,1_{\{\ell=d\}} -\theta\nu_0 (d-1) 1_{\{d\leq \ell\}}+ \gamma_{d,\ell} \, 1_{\{\ell< d\}}.
 \end{align}
 Similarly, we have
 \begin{align}\label{gd}
  G_D H(\ell,\cdot)(d)&={\sigma (d-1) \,1_{\{d-1=\ell\}}} - (d-1)(d+\theta\nu_1)\,1_{\{\ell=d\}} -\theta\nu_0 (d-1) 1_{\{d\leq \ell\}}\nonumber\\
  & + \sum\limits_{j=1}^{d-1}\left(\gamma_{d,j}-\gamma_{d,j-1}\right) \, 1_{\{j\leq  \ell <d\}}.
 \end{align}
 Summation by parts yields $\sum_{j=1}^{d-1}\left(\gamma_{d,j}-\gamma_{d,j-1}\right) \, 1_{\{j\leq  \ell<d\}}{= \gamma_{d,\ell}1_{\{\ell< d\}}}$. Thus, the result follows comparing \eqref{gd} with \eqref{gl}.
\end{proof}
{Now, we have all the ingredients to prove Theorem \ref{thm2.6}-(1).} 
\begin{proof}[Proof of Theorem \ref{thm2.6}(Ancestral type distribution)-(1)]
Since $L$ is positive recurrent, $L(T)$ converges in distribution as $T\to\infty$ towards the stationary distribution $\eta_L$. In particular, we infer from Eq. \eqref{pldasgatd} that the limit $h(x)$ of $h_T(x)$ as $T\to\infty$ exists and satisfies
\begin{align*}
h(x)&=1-\Eb[(1-x)^{L(\infty)}]=1-\sum_{\ell=1}^\infty \Pb(L(\infty)=\ell)(1-x)^\ell\\
&=\sum_{\ell=0}^\infty\Pb(L(\infty)>\ell)(1-x)^\ell-(1-x)\sum_{\ell=1}^\infty\Pb(L(\infty)>\ell-1)(1-x)^{\ell-1},
\end{align*}
and Eq. \eqref{represh(x)tailpldasg} follows. It remains to prove \eqref{fr}. From Lemma \ref{sd} we infer that $a_n=d_{n+1}$, where
\[d_n\coloneqq \mathbb{P}(\exists \beta>0: D(\beta)=1\mid D(0)=n), \quad n\geq 1.\]
Applying a first step decomposition to the process $D$, we obtain, for $n>1$,
\begin{equation}\label{r1}
\left[(n-1)(\sigma +\theta +n ) +{\gamma_{n,n-1}}\right]d_n= (n-1)\sigma d_{n-1}+(n-1)(\theta\nu_1+n)d_{n+1} + \sum\limits_{j=1}^{n-1} (\gamma_{n,j}-\gamma_{n,j-1})d_j.
\end{equation}
Using summation by parts and rearranging terms in \eqref{r1} yields
\begin{equation}\label{r2}
(\sigma +\theta +n ) d_n=  \sigma d_{n-1}+(\theta\nu_1+n)d_{n+1} +\frac{1}{n-1} \sum\limits_{j=1}^{n-1} \gamma_{n,j}(d_j-d_{j+1}),\quad n>1.
\end{equation}
The result follows.
\end{proof}

\begin{proof}[Proof of Corollary \ref{cor2.7}]
Since $\theta = 0$, the line-counting processes $R$ and $L$ have the same distribution. Hence, combining Lemma \ref{exprhtannealed} and {\eqref{md} (from Theorem \ref{thm2.3})} applied to $n=1$, we obtain
\begin{equation}\label{idzm}
 h_T(x)=\Eb[X(T)\mid X(0)=x],
\end{equation}
which proves the first part of the statement.
Moreover, for $\theta=0$, $X$ is a bounded submartingale, and hence $X(T)$ has almost surely a limit as $T\to\infty$, which we denote by $X(\infty)$. Letting $T\to \infty$, in the identity \eqref{idzm} yields
\begin{equation}\label{auxh0}
 h(x)=\Eb[X(\infty)\mid X(0)=x].
\end{equation}
Moreover, using {\eqref{md} (from Theorem \ref{thm2.3})} with $n=2$, we get
\[\Eb[(1-X(T))^2\mid X(0)=x]=\Eb[(1-x)^{L(T)}\mid L(0)=2].\]
Letting $T\to \infty$ and using that $L$ is positive recurrent, we obtain
\[\Eb[(1-X(\infty))^2\mid X(0)=x]=1-h(x).\]
Plugging \eqref{auxh0} in the previous identity yields the desired result.
\end{proof}
\begin{proof}[Proof of Proposition \ref{comparison}]
Using Eq. \eqref{fr} in Theorem \ref{thm2.6} for the two models, we obtain, for $n\in\Nb$,
\[(n+1)\rho_{n+1}^{\rm{sel}}=\sigma_\mu \rho_{n}^{\rm{sel}},\quad\textrm{and}\quad
(n+1)\rho_{n+1}^{\rm{env}}= \frac{1}{n}\sum_{j=1}^n \gamma_{n+1,j}\,\rho_j^{\rm{env}}.\]
Multiplying separately these equations with $z^n$, $z\in[0,1]$, and summing over $n\in\Nb$, one obtains
\[(p^{\rm{sel}})'(z)=\rho_1^{\rm{sel}}+\sigma_\mu\, p^{\rm{sel}}(z),\quad\textrm{and}\quad (p^{\rm{env}})'(z)=\rho_1^{\rm{env}}+\\sum_{j=1}^\infty \rho_j^{\rm{env}} g_j(z),\]
with $g_j(z)\coloneqq \sum_{n=j}^{2j-1}\gamma_{n+1,j}\frac{z^n}{n}.$ Solving the ODE for $p^{\rm{sel}}$ via variation of constants, and using that $p^{\rm{sel}}(0)$ and $p^{\rm{sel}}(1)=1$, yields the desired formulas for $\rho_1^{\rm{sel}}$ and $p^{\rm{sel}}$ (see also \cite[Thm. 6.1]{CM19}).
Now, using the definition of the coefficients $\gamma_{n+1,j}$ (defined below \eqref{fr}) followed by a straightforward calculation, one obtains
\[g_j(z) =\sum_{k=1}^{j}\binom{j}{k}\sigma_{j,k} \int_0^z \frac{u^{j-1}-u^{k+j-1}}{1-u} \dd u=\int_0^z\dd u\, \frac{u^{j-1}}{1-u}\int_{(0,1)}\mu(\dd y)\,(1-(1-y(1-u))^j),\]
where we have also used the definition of the coefficients $\sigma_{m,k}$ (see \eqref{smk}). Since $(1-h)^j\geq 1-jh$ for $h\in(0,1)$, we infer that $g_j(z)\leq \sigma_\mu z^{j}$ with equality only if $z=0$ or $j=1$. We conclude that
\[(p^{\rm{env}})'(z)<\rho_1^{\rm{env}}+\sigma_\mu \, p^{\rm{env}}(z), \quad z\in(0,1].\]
Letting $f(z)\coloneqq \rho_1^{\rm{env}}+\sigma_\mu \, p^{\rm{env}}(z)$ we then have $f'(z)/f(z) \leq \sigma_\mu$ so, after integration, $\log(f(z)/f(0)) \leq \sigma_\mu z$. Since $p^{\rm{env}}(0)=0$, this yields
\[p^{\rm{env}}(z)\leq \rho_1^{\rm{env}}\left(\frac{e^{\sigma_\mu z}-1}{\sigma_\mu}\right)=\frac{\rho_1^{\rm{env}}}{\rho_1^{\rm{sel}}}\,p^{\rm{sel}}(z).\]
Moreover, since $p^{\rm{env}}(1)=p^{\rm{sel}}(1)=1$, we conclude that $\rho_1^{\rm{env}}\geq\rho_1^{\rm{sel}}$. Assume now that $\rho_1^{\rm{env}}=\rho_1^{\rm{sel}}$. It follows that $p^{\rm{env}}(z)\leq p^{\rm{sel}}(z)$, for $z\in[0,1]$. Hence,
\[1=\int_{0}^1 (p^{\rm{env}})'(z)\dd z<\int_{0}^1 (\rho_1^{\rm{env}}+\sigma_\mu \, p^{\rm{env}}(z))\dd z\leq\int_{0}^1 (\rho_1^{\rm{sel}}+\sigma_\mu \, p^{\rm{sel}}(z))\dd z= \int_{0}^1 (p^{\rm{sel}})'(z)\dd z=1,\]
which is a contradiction. Thus, $\rho_1^{\rm{env}}>\rho_1^{\rm{sel}}$. In particular, for $z\neq 0$ sufficiently small, $p^{\rm{env}}(z)>p^{\rm{sel}}(z)$. Hence, the last statement follows using that $h^{\rm{env}}(z)=1-p^{\rm{env}}(1-z)$ and $h^{\rm{sel}}(z)=1-p^{\rm{sel}}(1-z)$. 
\end{proof}
\section{Quenched results}\label{S6}

\subsection{Quenched results related to Section \ref{s25}}\label{s61}
In this section we prove quenched parts of the results stated in Section \ref{s25}. We start with the proof of the second part of Theorem \ref{thm2.3}, which establishes the quenched moment duality \eqref{quenchedual} for almost every environment $\om$.

\begin{proof}[Proof of Theorem \ref{thm2.3} (Part II: quenched moment duality)]
Since both sides of \eqref{quenchedual} are right-continuous in $T$, it is sufficient to prove that, for any bounded measurable function $g:\Db^\star_T\to\Rb$,
 \begin{equation}\label{condex}
 \Eb[(1-X^J(T))^n g((J_s)_{s\in[0,T]})\mid {X^J(0)=x}]=\Eb[(1-x)^{{R_T^J(T-)}} g((J_s)_{s\in[0,T]})\mid {R_T^J(0-)}=n].
\end{equation}
 Let $\Hs:=\{g:\Db_T^\star \to\Rb:\textrm{ such that $\eqref{condex}$ is satisfied}\}$. Thanks to the annealed moment duality, i.e. Eq. \eqref{md}, every constant function {belongs} to $\Hs$. Moreover, $\Hs$ is closed under increasing limits of non-negative bounded functions in $\Hs$. Now, we claim that \eqref{condex} holds for functions of the form $g(\om)=g_1(\om(t_1))\cdots g_k(\om(t_k))$, with $0<t_1<\cdots<t_k<T$ and $g_i\in\Cs^2([0,\infty))$ with compact support. If the claim is true, then thanks to the monotone class theorem, $\Hs$ would contain any measurable function $g$, which would then achieve the proof.
\smallskip

We prove the claim by induction on $k$. For $k=1$, we need to prove that, for $t_1\in(0,T)$, 
\begin{equation}\label{condexsimple}
 \Eb[(1-X^J(T))^n g_1(J(t_1))\mid X^J(0)=x]=\Eb[(1-x)^{{R_T^J(T-)}} g_1(J({t_1}))\mid {R_T^J(0-)}=n].
\end{equation}
Note first that, using the Markov property for $X^J$ in $[0,t_1]$ followed by {Eq. \eqref{md} in Theorem \ref{thm2.3} and the fact that $t_1$ and $T$ are almost surely continuity times for $J$,} we obtain
\begin{align*}
  &\Eb\left[(1-X^J(T))^n g_1(J(t_1))\mid X^J(0)=x\right]\\
  &= \Eb\left[g_1(J(t_1))\hat{\Eb}\left[(1-\hat{X}^{\hat{J}}(T-t_1))^n\mid \hat{X}^{\hat{J}}(0)=X^J(t_1)\right]\mid X^J(0)=x\right]\\
  &= \Eb\left[g_1(J(t_1))\hat{\Eb}\left[(1-X^J(t_1))^{{{\hat{R}}_{T-t_1}^{\hat{J}}((T-t_1)-)}}\mid {\hat{R}_{T-t_1}^{\hat{J}}(0-)}=n\right]\mid X^J(0)=x\right],
\end{align*}
where the subordinator $\hat{J}$ is defined via $\hat{J}(h)\coloneqq J(t_1+h)-J(t_1)$. The processes {$\hat{X}^{\hat{J}}$ and $\hat{R}_{T-t_1}^{\hat{J}}$} are independent copies of {$X^J$ and $R_{T-t_1}^J$,} which are driven by $\hat{J}$ (which is in turn independent of $(J(u))_{u\in[0,t_1]}$). Using first Fubini's theorem, and then Eq. \eqref{rmd} in Theorem \ref{thm2.3} {and the fact that $0$ and $t_1$ are almost surely continuity times for $J$,} the last expression equals
\begin{align*}
&\hat{\Eb}\left[\Eb\left[g_1(J(t_1))(1-X^J(t_1))^{{{\hat{R}}_{T-t_1}^{\hat{J}}((T-t_1)-)}}\mid X^J(0)=x\right]\mid {{\hat{R}}_{T-t_1}^{\hat{J}}(0-)}=n\right]\\
=&\hat{\Eb}\left[\Eb\left[g_1(J(t_1))(1-x)^{{R_{t_1}^J(t_1-)}}\mid {R_{t_1}^J(0-)}={{\hat{R}}_{T-t_1}^{\hat{J}}((T-t_1)-)}\right]\mid {{\hat{R}}_{T-t_1}^{\hat{J}}(0-)}=n\right].
  \end{align*}
The proof of the claim for $k=1$ is achieved using the Markov property for {$R_T^J$} in the (backward) interval $[0,T-t_1]$.
Let us now assume that the claim is true up to $k-1$. We proceed as before to prove that the claim holds for $k$. Using the Markov property for {$X^J$} in $[0,t_1]$ followed by the inductive step, we obtain
\begin{align*}
 &\Eb\left[(1-X^J(T))^n \prod_{i=1}^k g_i(J(t_i))\mid X^J(0)=x\right]\\
 &=\Eb\left[g_1(J(t_1))\hat{\Eb}\left[(1-{\hat{X}}^{\hat{J}}(T-t_1))^n\, G(J(t_1),\hat{J})\mid {\hat{X}}^{\hat{J}}(0)=X^J(t_1)\right]\mid X^J(0)=x\right]\\
 &=\Eb\left[g_1(J(t_1))\hat{\Eb}\left[(1-X^J(t_1))^{{{\hat{R}}_{T-t_1}^{\hat{J}}((T-t_1)-)}}\,G(J(t_1),\hat{J})\mid {{\hat{R}}_{T-t_1}^{\hat{J}}(0-)}=n\right]\mid X^J(0)=x\right], 
 \end{align*}
where $G(J(t_1),\hat{J})\coloneqq\prod_{i=2}^k g_i(J(t_1)+\hat{J}(t_i-t_1))$. Using Fubini's theorem, {the reinforced  duality Eq. \eqref{rmd}}, {and the fact that $0$ and $t_1$ are almost surely continuity times for $J$,} the last expression equals
 \begin{align*}
 &\hat{\Eb}\left[{\Eb}\left[(1-X^J(t_1))^{{{\hat{R}}_{T-t_1}^{\hat{J}}((T-t_1)-)}}g_1(J(t_1))\,G(J(t_1),\hat{J})\mid X^J(0)=x\right] \mid {{\hat{R}}_{T-t_1}^{\hat{J}}(0-)}=n\right]\\
 =&\hat{\Eb}\left[{\Eb}\left[(1-x)^{{R_{t_1}^J(t_1-)}}g_1(J(t_1))\,G(J(t_1),\hat{J})\mid {R_{t_1}^J(0-)}={{\hat{R}}_{T-t_1}^{\hat{J}}((T-t_1)-)}\right] \mid {{\hat{R}}_{T-t_1}^{\hat{J}}(0-)}=n\right],
\end{align*}
and the proof is achieved using the Markov property for {$R^J_T$} in the (backward) interval $[0,T-t_1]$.
\end{proof} 
\begin{proof}[Proof of Theorem \ref{thm2.4}-(2)(Asymptotic type frequency)]
Let $\om$ be such that the quenched moment duality \eqref{quenchedual} holds between $-\tau$ and $0$. In particular,
\begin{equation}\label{mdt0}
 \mathbb{E} \left [ (1-X^\omega(0))^n|X^\omega(-\tau)=x \right ]= \mathbb{E} \left [ (1-x)^{R_0^\omega(\tau-)}|R_0^\omega(0-)=n \right ].
\end{equation}
Since we assume that $\theta >0$ and $\nu_0, \nu_1 \in (0,1)$, the right hand side converges to $\Pi_n(\omega)$ (defined in \eqref{defpin}), which proves that the moment of order $n$ of $1-X^\omega(0)$ conditionally on $\{ X^\omega(-\tau) = x \}$ converges to $\Pi_n(\omega)$. Since we deal with random variables supported on $[0,1]$, the convergence of the positive integer moments proves the convergence in distribution and the fact that the limit distribution $\mathcal{L}^\om$ satisfies \eqref{dualimite}. 
\smallskip

It remains to prove \eqref{approxwn}. For $\upsilon\in\Ms_1(\mathbb{N}_0^\dagger)$ with finite support, let $\upsilon^\omega_s$ denote the distribution of $R_0^\omega(s-)$ given that $R_0^\om(0-)\sim\upsilon$. 
Let $T_{0, \dagger}^\om$ be the absorption time of $R_0^\omega$ at $\{ 0, \dagger \}$. Note that $T_{0, \dagger}^\om$ is stochastically bounded by an exponential random variable with parameter $\theta\nu_0$. Therefore,
\begin{align*}
\upsilon^\omega_\tau(\mathbb{N}) & = \mathbb{P}_{\upsilon} \left ( R_0^\omega(\tau-) \in \mathbb{N} \right ) = \mathbb{P}_{\upsilon} \left ( T_{0, \dagger}^\om > \tau \right ) \leq e^{- \theta\nu_0 \tau}. \label{binomialisation}
\end{align*}
Hence,
we have 
\begin{align*}
\mathbb{P}_{\upsilon} \left ( \exists s\geq 0 \ \text{s.t.} \ R_0^\omega(s)=0 \right ) & = \upsilon_{\tau}^\omega(\{0\}) + \sum_{k \geq 1} \mathbb{P}_{\upsilon} \left ( R_0^\omega(\tau-) = k \ \text{and} \ \exists s\geq \tau \ \text{s.t.} \ R_0^\omega(s)=0 \right ) \\
& \leq \upsilon_{\tau}^\omega(\{0\}) + \upsilon_\tau^\omega(\mathbb{N}) \leq \upsilon_\tau^\omega(\{0\}) + e^{-\theta \nu_0 \tau}. 
\end{align*}
Thus, we obtain  
\begin{eqnarray}
 \upsilon_{\tau}^\omega(\{0\}) \leq \mathbb{P}_{\upsilon} \left ( \exists s\geq 0 \ \text{s.t.} \ R_0^\omega(s)=0 \right ) \leq \upsilon_{\tau}^\omega(\{0\}) + e^{-\theta\nu_0 \tau}. \label{absetmu0}
\end{eqnarray}
Similarly, we have 
\begin{align*}
\mathbb{E}_{\upsilon} \left [ (1-x)^{R_0^\omega(\tau-)} \right ] & = \upsilon_{\tau}^\omega(\{0\}) + \sum_{k \geq 1} (1-x)^k \upsilon_{\tau}^\omega(\{k\}) \leq \upsilon_{\tau}^\omega(\{0\}) + \upsilon_\tau^\omega(\mathbb{N}) \leq  \upsilon_{\tau}^\omega(\{0\}) + e^{-\theta\nu_0 \tau}.
\end{align*}
Hence,
\begin{eqnarray}
\upsilon_{\tau}^\omega(\{0\})\leq \mathbb{E}_{\upsilon} [ (1-x)^{R_0^\omega(\tau-)} ] \leq \upsilon_{\tau}^\omega(\{0\}) + e^{-\theta\nu_0 \tau}. \label{absetmu00}
\end{eqnarray}
Recall from Section \ref{s25} that $\Pi_n(\omega)\coloneqq \mathbb{P}(\exists s\geq 0 \ \text{s.t.} \ R_0^\omega(s)=0 \mid R_0^\omega(0-)=n)$. Choosing {$\upsilon = \delta_n$} in \eqref{absetmu0} and in \eqref{absetmu00} and subtracting both inequalities, we get 
\[ \left | \mathbb{E} \left [ (1-x)^{R_0^\omega(\tau-)}\mid R_0^\omega(0-)=n \right ] - \Pi_n(\omega) \right | \leq e^{-\theta\nu_0 \tau}. \]
This inequality together with \eqref{quenchedual} (i.e. the quenched moment duality) yields the desired result. 
\end{proof}
\begin{proof}[Proof of Proposition \ref{prop2.5}]
Let $\om\in\Db^\star$ such that \eqref{quenchedual} holds. Let $J_\om\coloneqq J\otimes_{\tau_\star} \om$. Consider the process $X^{J_\om}$ in $[-\tau,0]$ with $\tau>{\tau_\star^{}}$. Using the Markov property, we obtain 
\[\Eb\!\left[(1-X^{J_\om}(0))^n\mid X^{J_\om}(-\tau)=x\right]=\!\int_0^1\!\!\Eb\!\left[(1-X^{\om}(0))^n\mid X^{\om}(-{\tau_\star^{}})=y\right]\Pb(X(-{\tau_\star^{}})\in \dd y\mid X(-\tau)=x),\]
where $X$ is the solution of \eqref{WFSDE} with subordinator $J$. Combining the previous identity with \eqref{quenchedual} for $X^\om$ in $(-\tau_\star,0)$, and using translation invariance of $X$, we obtain
\[\Eb\!\left[(1-X^{J_\om}(0))^n\mid X^{J_\om}(-\tau)=x\right]=\!\int_0^1\!\!\Eb\!\left[(1-y)^{R_0^\om(\tau_\star^{} -)}\mid R_0^{\om}(0-)=n\right]\Pb(X(\tau-\tau_\star)\in \dd y\mid X(0)=x).\]
Hence, letting $\tau\to\infty$ and using Theorem \ref{thm2.4}-(1), we get
\[\lim_{\tau\to\infty}\Eb\left[(1-X^{J_\om}(0))^n\mid X^{J_\om}(-\tau)=x\right]=\!\int_0^1\!\!\Eb\left[(1-y)^{R_0^\om(\tau_\star^{} -)}\mid R_0^{\om}(0-)=n\right]\Pb(X(\infty)\in \dd y),\]
and the result follows from Eq. \eqref{cvmomentsannealed} in Theorem \ref{thm2.4}-(1).
\end{proof}

\subsection{Quenched results related to Section \ref{s26}}\label{s62}
This section is devoted to the proof of Theorem \ref{thm2.6}-(2), which describes the asymptotic behavior of the ancestral type distribution. 
\begin{lemma}\label{hTq}
For all $T \geq 0$, $x\in[0,1]$ and $\om\in\Db^\star$, we have
\begin{align}
 h^{\omega}_T(x)&=1-\mathbb{E}[(1-x)^{L_T^\omega(T-)}\mid L_T^\omega(0-)=1].\label{pldasgatdq}
 \end{align}
\end{lemma}
\begin{proof}
The proof is analogous to the proof of Lemma \ref{exprhtannealed}.
\end{proof}
Now, we proceed to prove Theorem \ref{thm2.6}-(2).

\begin{proof}[Proof of Theorem \ref{thm2.6}(Ancestral type distribution)-(2)]
Recall that by assumption $\theta\nu_0>0$. For $\mu\in\Ms_1(\mathbb{N})$, we denote by $\mu^\omega_T(\beta)$ the distribution of $L_T^\omega(\beta-)$ given that $L_T^\omega(0-)\sim\mu$. 
Let $t > s > 0$. Note that we have $\mu^{\omega}_{t}(t) = (\mu^{\omega}_{t}(t - s))^{\omega}_{s}(s)$ so 
\begin{eqnarray}
d_{TV} (\mu^{\omega}_{t}(t),\mu^{\omega}_{s}(s)) = d_{TV} ( (\mu^{\omega}_{t}(t - s))^{\omega}_{s}(s), \mu^{\omega}_{s}(s)), \label{shifchanges startinglaw}
\end{eqnarray}
where $d_{TV}(\mu_1,\mu_2)$ stands for the total variation distance between $\mu_1$ and $\mu_2$.
\smallskip

Assume now that $L_T^\omega(0-)\sim\mu$. By construction, $L_T^\om$ jumps from any state $i$ to the state $1$ with rate $q^0(i,1)\geq \theta\nu_0 > 0$ {(see \eqref{kratespldasg})}. Let $\hat{L}_T^\om$ be a process with initial distribution $\mu$, evolving as $L_T^\om$, but jumping from $i$ to $1$ at rate $q^0(i,1) - \theta\nu_0 \geq 0$. {We decompose the dynamic of $L_T^\om$} as follows: (1) $L_T^\omega$ evolve as $\hat{L}_T^\om$ on $[0, \xi]$, where $\xi$ is an independent exponential random variable with parameter $\theta\nu_0$, (2) at time $\xi$, $L_T^\om$ jumps to the state $1$ {regardless of its current position}, and (3) conditionally on $\xi$, $L_T^\om$ has the same law on $[\xi, \infty)$ as an independent copy of $L_{T-\xi}^\omega$ started with one line. This idea allows us to couple $L_T^\om$ to a copy of it $\tilde{L}_T^\omega$ with starting law $\tilde \mu$, so that the two processes are equal on $[\xi, \infty)$. Since $L_T^\omega(T-) \sim \mu^{\omega}_T(T)$ and $\tilde{L}_T^\omega(T-) \sim \tilde\mu^{T}_T(\omega)$, we have 
\begin{eqnarray}
d_{TV} (\mu^{\omega}_T(T), \tilde \mu^{\omega}_T(T)) \leq \mathbb{P} \left ( \tilde{L}_T^\omega(T-) \neq {L}_T^\omega(T-) \right ) \leq \mathbb{P} \left ( \xi > T \right ) = e^{-\theta\nu_0  T}. \label{rapprochementexpo}
\end{eqnarray}
This together with \eqref{shifchanges startinglaw}, implies that, for any $\mu\in\Ms_1(\Nb)$ and any $t > s > 0$, 
\begin{eqnarray}
\ d_{TV} (\mu^{\omega}_{t}(t), \mu^{\omega}_{s}(s)) \leq e^{-\theta\nu_0 s}. \label{cauchyexpo}
\end{eqnarray}
In particular $(\mu^{\omega}_{t}(t))_{t > 0}$ is Cauchy as $t \rightarrow \infty$ for the total-variation distance. Therefore, $(\mu^{\omega}_{t}(t))_{t > 0}$ has a limit $\mu^{\omega}\in\Ms_1(\Nb)$. Moreover, \eqref{rapprochementexpo} implies that $\mu^\omega$ does not depend on $\mu$, and the first {part} of Theorem \ref{thm2.6}-(2) is proved. Identity \eqref{pldasgatdtpsinftyq} follows then by Lemma \ref{hTq}.
\smallskip

Setting $s = T$ and letting $t\to\infty$ in \eqref{cauchyexpo} yields 
\[d_{TV} (\mu^\omega, \mu^{\omega}_{T}(T)) \leq e^{-\theta\nu_0 T}.\] 
Since $h^{\omega}(x) = 1 - \mathbb{E} \left[(1-x)^{Z_{\infty}^\om} \right]$ and $h^{\omega}_T(x) = 1 - \mathbb{E} \left[(1-x)^{Z_{T}^\om} \right]$, where $Z_{\infty}^\om \sim (\delta_1)^{\omega}$ and $Z_{T}^\om \sim (\delta_1)^{\om}_{T}(T)$, we get
\[\lvert h_T^\om(x)-h^\om(x)\rvert\leq d_{TV}((\delta_1)^{\omega}, (\delta_1)^{\om}_{T}(T))\leq e^{-\theta\nu_0 T},\]
achieving the proof. 
\end{proof}

\section{Further quenched results for simple environments}\label{S7}
In this section we provide, for simple environments, extensions and refinements of the results obtained in Sections \ref{s25} and \ref{s26} in the quenched setting. Recall the quenched diffusion $X^\omega$ defined in Section \ref{s23}. 
\subsection{Extensions of quenched results in Section \ref{s25} }\label{s71}
First, we extend the main quenched results in Section \ref{s25}, which hold for almost every environment, to any simple environment. 
\begin{theorem}[Quenched moment duality for simple environments]\label{thmf1}
 The quenched moment duality \eqref{quenchedual} holds for any simple environment.
\end{theorem}
The proof of Theorem \ref{thmf1} has two main ingredients: a moment duality between the jumps of the environment, and a moment duality at the jumps. These results are covered by the next two lemmas. 
\begin{lemma}[Quenched moment duality between the jumps] \label{momdualbetween} 
Let $0\leq s<t\leq T$ and assume that $\om$ has no jumps in $(s,t)$. For all $x\in[0,1]$ and $n\in \mathbb{N}$, we have 
\[ \mathbb{E} \left [ (1-X^\omega(t-))^n \mid X^\omega(s)=x \right ]= \mathbb{E} \left [ (1-x)^{R_T^\omega((T-s)-)}\mid R_{T}^\omega(T-t)=n \right ]. \]
\end{lemma}
\begin{proof}
In $(s,t)$, the processes $X^\omega$ and $R_t^\omega$ evolve as in the annealed case with $\mu=0$. Therefore, the result follows applying Theorem \ref{thm2.3} with $\mu = 0$. 
\end{proof}
\begin{lemma}[Quenched moment duality at jumps] \label{momdualatjumps} 
Assume that $\om\in\Db^\star$ is simple and has a jump at time $t<T$. Then, for all $x\in[0,1]$ and $n\in \mathbb{N}$, we have 
\[ \mathbb{E} \left [ (1-X^\omega(t))^n \mid X^\omega(t-)=x \right ]= \mathbb{E}\left [ (1-x)^{R_{T}^\omega(T-t)}\mid R_{T}^\omega((T-t)-)=n \right ]. \]
\end{lemma}
\begin{proof}

On the one hand, since $X^\omega(t) = X^\omega(t-) + X^\omega(t-)(1-X^\omega(t-))\Delta \omega(t)$ almost surely, we have 
\begin{equation}\label{jumponx}
\mathbb{E}\left [ (1-X^\omega(t))^n\mid X^\omega(t-)=x \right ] = \left [ 1 - x (1 + (1-x) \Delta \omega(t)) \right ]^n  = \left [(1-x)(1-x\Delta\om(t))\right]^n.
\end{equation}
{On the} other hand, conditionally on $\{R_{T}^\omega((T-t)-)=n\}$, we have $R_{T}^\omega(T-t) \sim n + Y$ where $Y \sim \bindist{n}{\Delta \omega(t)}$. Therefore 
\begin{align}
\mathbb{E} \left [ (1-x)^{R_{T}^\omega(T-t)}\mid R_{T}^\omega((T-t)-)=n \right ] & = \mathbb{E}\left [ (1-x)^{n+Y} \right ]  = (1-x)^n \left [ 1-\Delta \omega(t) + \Delta \omega(t)(1-x) \right ]^n \nonumber \\
& =\left [(1-x)(1-x\Delta\om(t))\right]^n. \label{jumponr}
\end{align}
The combination of \eqref{jumponx} and \eqref{jumponr} yields the result. 
\end{proof}
\begin{proof}[Proof of Theorem \ref{thmf1}]
Let $\om$ be a simple environment. Let $(t_i)_{i=1}^m$ be the increasing sequence of jump times of $\omega$ in $[0,T]$. Without loss of generality we assume that $0$ and $T$ are both jump times of $\omega$. In particular, $t_1 = 0$ and $t_m=T$. Let $(X^\omega( s))_{ s \in[0,T]}$ and $(R_T^\omega(\beta))_{\beta\in[0,T]}$ be independent realizations of the Wright--Fisher process and the line-counting process of the k-ASG, respectively. For $s\in[0,T]$, denote by $\mu_s^\om(x,\cdot)$ and $\bar{\mu}_s^\om(x,\cdot)$ the law of $X^\om(s)$ and $X^\om(s-)$, respectively, given that $X^\om(0)=x$. Partitioning with respect to the values of $X^\omega(t_m-)$ and using Lemma \ref{momdualatjumps} at $t=T$, we get 
\begin{align*}
\mathbb{E} \left [ (1-X^\omega(T))^n\mid X^\omega(0)=x \right ] 
&=  \int_0^1 \mathbb{E} \left [ (1-X^\omega(t_m))^{n}\mid X^\omega(t_m-)=y \right ] \bar{\mu}_{t_m}^\om(x,\dd y)\\ 
&=  \int_0^1 \mathbb{E} \left [ (1-y)^{R_T^\omega(0)}\mid R_{T}^\omega(0-) = n \right ]   \bar{\mu}_{t_m}^\om(x,\dd y) \\ 
&= \mathbb{E} \left [ (1-X^\omega(t_m-))^{R_T^\omega(0)}\mid R_T^\omega(0-)=n, X^\omega(0)=x \right ]\eqdef \,I_T^\om(x,n). 
\end{align*}
Set, for $t<T$, $q_{n,k}^\om(T,t)\defeq\mathbb{P} \left ( R_T^\omega(t) = k \mid R_T^\omega(0-)=n \right ) $. Partitioning with respect to the values of $X^\omega(t_{m-1})$ and $R_T^\omega(0)$, and using Lemma \ref{momdualbetween}, we get 
\begin{align*}
I_T^\om(x,n)=& \sum_{k \in \mathbb{N}_0^\dagger} \mathbb{E} \left [ (1-X^\omega(t_m-))^{k}\mid X^\omega(0)=x \right ] q_{n,k}^\om(T,0) \\
= & \sum_{k \in \mathbb{N}_0^\dagger}q_{n,k}^\om(T,0) \int_0^1 \mathbb{E}\left [ (1-X^\omega(t_m-))^k \mid X^\omega(t_{m-1})=y \right ]   {\mu}_{t_{m-1}}^\om(x,\dd y) \\
= & \sum_{k \in \mathbb{N}_0^\dagger}q_{n,k}^\om(T,0) \int_0^1 \mathbb{E} \left [ (1-y)^{R_T^\omega((T-t_{m-1})-)}\mid R_T^\omega(0)=k \right ]  \mu_{t_{m-1}}^\om(x,\dd y) \\
= & \mathbb{E} \left [ (1-X^\omega(t_{m-1}))^{R_T^\omega((T-t_{m-1}^{})-)}\mid R_T^\omega(0-)=n, X^\omega(0)=x \right ]. 
\end{align*}
If $m=2$ the proof of \eqref{quenchedual} is already complete. If $m > 2$, we continue as follows. Partitioning with respect to the values of $R_T^\omega((T-t_{m-1})-)$ and of $X^\omega(t_{m-1}^{}-)$, and using Lemma \ref{momdualatjumps}, we obtain 
\begin{align*}
&I_T^\om(x,n)= \sum_{k \in \mathbb{N}_0^\dagger} \mathbb{E} \left [ (1-X^\omega(t_{m-1}))^{k}\mid X^\omega(0)=x \right ] q_{n,k}^\om(T,(T-t_{m-1})-) \\
= & \sum_{k \in \Nb_0^\dagger} q_{n,k}^\om(T,(T-t_{m-1})-)\int_0^1 \mathbb{E} \left [ (1-X^\omega(t_{m-1}))^{k}\mid X^\omega(t_{m-1}-)=y \right] \bar{\mu}_{t_{m-1}}^\om(x,\dd y) \\ 
= & \sum_{k \in \mathbb{N}_0^\dagger}q_{n,k}^\om(T,(T-t_{m-1})-) \int_0^1 \mathbb{E}\left [ (1-y)^{R_T^\omega(T-t_{m-1})}\mid R_{T}^\omega((T-t_{m-1})-) = k \right ] \bar{\mu}_{t_{m-1}}^\om(x,\dd y) \\ 
=&  \mathbb{E}\left [ (1-X^\omega(t_{m-1}-))^{R_T^\omega(T-t_{m-1})}\mid R_T^\omega(0-)=n, X^\omega(0)=x \right ]. 
\end{align*}
Iterating this procedure, using successively Lemma \ref{momdualbetween} and Lemma \ref{momdualatjumps} (the first one is applied on the intervals {$(t_{i-1}, t_i)$}, while the second one is applied at the times {$t_{i}$}), we finally obtain
\[\mathbb{E}[ (1-X^\omega(T))^n\mid X^\omega(0)=x ]= \mathbb{E} \left [ (1-x)^{R_T^\omega(T-)}|R_T^\omega(0-)=n \right ], \]
which ends the proof.
\end{proof}
\begin{theorem}[Quenched asymptotic type frequency for simple environments]\label{thmf2}
The statement of Theorem \ref{thm2.4}-(2) holds for any simple environment.
\end{theorem}
\begin{proof}
Analogous to the proof of Theorem \ref{thm2.4}-(2), but using Theorem \ref{thmf1} instead of Theorem \ref{thm2.3}. 
\end{proof}
\textbf{Refinements for $\sigma=0$.} Under this additional assumption, we provide a more explicit expression of $\Pi_n(\om)$ (defined in \eqref{defpin}). This is possible thanks to the following explicit diagonalization of the matrix $Q_\dagger^0$ (the transition matrix of the process $R$ under the null environment).
\begin{lemma}\label{diag}
 Assume that $\sigma=0$ and set, for $k\in\Nb_0^\dagger$, $\lambda_k^\dagger\coloneqq -q_\dagger^0(k,k)$, and, for $k\in\Nb$, $\gamma_k^\dagger\coloneqq q_\dagger^0(k,k-1)$, {where $q_\dagger^\mu(\cdot,\cdot)$ is defined in \eqref{krates}}. In addition, let 
 \begin{itemize}
  \item[(i)] $D_\dagger$ be the diagonal matrix with diagonal entries $(-\lambda_i^\dagger)_{i\in\Nb_0^\dagger}$, 
  \item[(ii)] $U_\dagger\coloneqq (u_{i,j}^\dagger)_{i,j\in\Nb_0^\dagger}$, where $u_{\dagger,\dagger}^\dagger \coloneqq  1$ and $u_{\dagger,j}^\dagger\coloneqq  0$ for $j\in\Nb_0$, and, for $i\in\Nb_0$
\begin{eqnarray}
u_{i,j}^\dagger \coloneqq  \prod_{l=j+1}^{i} \left ( \frac{\gamma_{l}^\dagger}{\lambda_l^\dagger - \lambda_j^\dagger} \right ) \,\textrm{for } j\in[i]_0, \ u_{i,j}^\dagger\coloneqq 0,\, \textrm{for } j> i\ \textrm{and} \ \ u_{i,\dagger}^\dagger \coloneqq  \theta \nu_0 \sum_{k = 1}^{i} \frac{k}{\lambda_{k}^\dagger} \prod_{l=k+1}^{i} \frac{\gamma_{l}^\dagger}{\lambda_l^\dagger}, \label{recrelalphani3}
\end{eqnarray}
  \item[(iii)] $V_\dagger\coloneqq (v_{i,j}^\dagger)_{i,j\in\Nb_0^\dagger}$, where $v_{\dagger,\dagger}^\dagger \coloneqq  1$ and $v_{\dagger,j}^\dagger\coloneqq  0$ for $j\in\Nb_0$, and, for $i\in\Nb$
\begin{eqnarray}
v_{i,j}^\dagger \coloneqq   \prod_{l=j}^{i-1} \left ( \frac{-\gamma_{l+1}^\dagger}{\lambda_i^\dagger - \lambda_l^\dagger} \right ) \,\textrm{for } j\in[i]_0, \ v_{i,j}^\dagger\coloneqq 0,\, \textrm{for } j> i\ \textrm{and} \ \ v_{i,\dagger}^\dagger \coloneqq  \frac{- \theta \nu_0}{ \lambda_i^\dagger} \sum_{k = 1}^{i} k \prod_{l=k}^{i-1} \left ( \frac{- \gamma_{l+1}^\dagger}{\lambda_i^\dagger - \lambda_l^\dagger} \right ), \label{recrelalphani2}
\end{eqnarray} 
 \end{itemize}
with the convention that an empty sum equals $0$ and an empty product equals $1$. Then, we have 
\[Q_\dagger^0=U_\dagger D_\dagger V_\dagger\quad \textrm{and}\quad U_\dagger V_\dagger=V_\dagger U_\dagger=Id.\]
\end{lemma}
\begin{proof}[Proof of Lemma \ref{diag}]
For any $i\in\Nb_0^\dagger$, let $e_i\coloneqq (e_{i,j})_{j\in\Nb_0^\dagger}$ be the vector defined via $e_{i,i}\coloneqq1$ and $e_{i,j}\coloneqq0$ for $j\neq i$. Order $\Nb_0^\dagger$ as $\{ \dagger, 0, 1, 2,... \}$, so that the matrix $(Q_\dagger^0)^{\top}$ is upper triangular with diagonal elements $(-\lambda_{\dagger}, -\lambda_{0}, -\lambda_{1}, -\lambda_{2}, \ldots)$. For $n\in\Nb_0^\dagger$, let $v_n \in \textrm{Span} \{ e_i: i \in[n]_0\cup\{\dagger\} \}$ be the eigenvector of $(Q_\dagger^0)^{\top}$ associated with the eigenvalue $-\lambda_n$ normalized so that its coordinate with respect to $e_n$ is $1$. It is not difficult to see that these eigenvectors exist and that we have $v_{\dagger} = e_{\dagger}$ and $v_{0} = e_{0}$. For $n \geq 1$, writing $v_n = c_\dagger e_\dagger + c_{0} e_{0} +  ... + c_{n-1} e_{n-1} + e_{n}$ and multiplying by $\frac1{-\lambda_n} (Q_\dagger^0)^{\top}$ on both sides, we obtain another expression of $v_n$ as a linear combination of $e_\dagger, e_{0},\ldots, e_{n-1}, e_{n}$. Identifying both expressions, we obtain that $c_k=v_{n,k}^\dagger$, for $k\leq n-1$. In particular, we have 
\[ v_n = v_{n,\dagger}^\dagger e_{\dagger}+ v_{n,0}^\dagger e_0 +\cdots+ v_{n,n-1}^\dagger e_{n-1} + v_{n,n}^\dagger e_{n}.\]
Proceeding in a similar way, one obtains that
\[ e_n = u_{n,\dagger}^\dagger v_\dagger + u_{n,0}^\dagger v_{0} + \cdots + u_{n,n-1}^\dagger v_{n-1} + u_{n,n}^\dagger v_{n}. \]
We thus get that $V_\dagger^{\top} U_\dagger^{\top} = U_\dagger^{\top} V_\dagger^{\top}=Id$ and $(Q_\dagger^0)^{\top} = V_\dagger^{\top} D_\dagger U_\dagger^{\top}$  (the matrix products are well-defined, because they involve sums of finitely many non-zero terms). This ends the proof. 
\end{proof}

Now, consider the polynomials $S_k^\dagger$, $k \in\Nb_0$, defined via
\begin{eqnarray}
S_k^\dagger(x) \coloneqq  \sum_{i=0}^{k} v_{k,i}^\dagger\, x^i,\quad x\in[0,1]. \label{newbasis9}
\end{eqnarray}
In addition, for $z\in(0,1)$, we define the matrices $\Bs(z)\coloneqq (\Bs_{i,j}(z))_{i,j\in\Nb_0^\dagger}$ and $\Phi^\dagger(z)\coloneqq (\Phi^\dagger_{i,j}(z))_{i,j\in\Nb_0^\dagger}$ via
\begin{equation} \label{defbetaki}
\Bs_{i,j}(z)\coloneqq \left\{\begin{array}{ll}
            \Pb(i+B_i(z)=j)&\textrm{for $i,j\in\Nb,$}\\
            1&\textrm{for $i=j\in\{0,\dagger\},
            $}\\
            0&\textrm{otherwise},
            \end{array}\right.\quad\textrm{and}\quad
            {\Phi^\dagger(z)\coloneqq} U_\dagger^\top \Bs(z)^\top V_\dagger^\top,
\end{equation}
where $B_i(z)\sim\bindist{i}{z}$. We will see in the proof of Theorem \ref{thmf3} that $\Phi^\dagger(z)$ is well-defined.
\begin{theorem} \label{thmf3}
Assume that $\sigma=0$, $\theta >0$ and $\nu_0, \nu_1 \in (0,1)$. Let $\om$ be a simple environment. Denote by $N\coloneqq N(\tau)$ the number of jumps of $\om$ in $(-\tau,0)$ and let $(T_i)_{i=1}^N$ be the sequence of the jump times in decreasing order, and set $T_0 \coloneqq  0$. For any $m\in[N]$, define the matrix $A_m^{\dagger}(\omega)\coloneqq (A_{i,j}^{\dagger,m}(\omega))_{i,j\in\Nb_0^\dagger}$ via 
\begin{eqnarray} 
A^{\dagger}_m(\omega) \coloneqq  \Phi^\dagger(\Delta \omega(T_m)) \exp \left ( (T_{m-1} - T_m) D_\dagger \right ). \label{defmatA}
\end{eqnarray}
Then, for all $x \in (0,1)$ and $n \in \mathbb{N}$, we have 
\begin{eqnarray} 
\mathbb{E}\left [ (1-X^\omega(0))^n\mid X^\omega(-\tau)=x \right ] = \sum_{k = 0}^{n 2^N} C^\dagger_{n,k}(\om,\tau) S_k^\dagger(1-x), \label{exprmomtpst}
\end{eqnarray}
where the matrix $C^\dagger(\om,\tau)\coloneqq (C^\dagger_{n,k}(\om,\tau))_{k,n\in\Nb_0^\dagger}$ is given by
\begin{equation} 
C^\dagger(\om,\tau)\coloneqq U_\dagger \left[A_N^\dagger(\om)A_{N-1}^\dagger(\om)\cdots A_1^\dagger(\om)\right]^\top \exp \left ((T_N+\tau)D_\dagger \right ), \label{defcoeffmom}
\end{equation} 
with the convention that an empty product of matrices is the identity matrix. Moreover, for all $n\in\Nb$,
\begin{equation} 
\ \Pi_n(\omega) = C_{n,0}^\dagger(\omega,\infty)\coloneqq \lim_{\tau\to\infty} C_{n,0}^\dagger(\om,\tau)=\lim_{\tau \to \infty} \left(U_\dagger \left[A_{N(\tau)}^\dagger(\om)A_{{N(\tau)}-1}^\dagger(\om)\cdots A_1^\dagger(\om)\right]^\top\right)_{n,0}, \label{exprfctgenlt4infmom}
\end{equation}
where the previous limits are well-defined. 
\end{theorem}

\begin{proof}
Let us first show that the matrix products in \eqref{defbetaki}, \eqref{defmatA} and \eqref{defcoeffmom} are well-defined and that $C^\dagger_{n,k}(\om,\tau) = 0$ for all $k > n2^N$. To this end, order $\Nb_0^\dagger$ as $\{ \dagger, 0, 1, 2,... \}$, so that the matrices $U_\dagger^\top$ and $V_\dagger^\top$ are upper triangular. Note also that $\Bs_{j,i}(z)=0$ for $i>2j$. Therefore, for any $n \in\Nb$ and any $v= (v_i)_{i \in \Nb_0^\dagger}$ such that $v_i = 0$ for all $i > n$, the vector $\tilde v \coloneqq  U_\dagger^\top (\Bs(z)^\top (V_\dagger^\top v))$ is well-defined and satisfies $\tilde v_i = 0$ for all $i > 2n$. It follows that the matrix $\Phi^\dagger(z)$ in \eqref{defbetaki} is well-defined. Moreover, since $\exp ( (T_{m-1} - T_m) D_\dagger )$ is diagonal, the product defining the matrix $A^{\dagger}_m(\omega)$ in \eqref{defmatA} is also well-defined. Furthermore, for any $n \in\Nb$ and any vector $v = (v_i)_{i \in \Nb_0^\dagger}$ such that $v_i = 0$ for all $i > n$, the vector $\tilde v \coloneqq  A^{\dagger}_m(\omega) v$ satisfies $\tilde v_i = 0$ for all $i > 2n$. In particular, for any $m \geq 1$, the product $\exp ( -(T_N+\tau)D_\dagger) A_m^\dagger(\om)A_{m-1}^\dagger(\om)\cdots A_1^\dagger(\om) U_\dagger^\top$ is well-defined. Additionally, for $n \geq 1$ and a vector $v = (v_i)_{i \in \Nb_0^\dagger}$ such that $v_i = 0$ for all $i > n$, the vector $\tilde v \coloneqq  \exp ( -(T_N+\tau)D_\dagger) A_m^\dagger(\om)A_{m-1}^\dagger(\om)\cdots A_1^\dagger(\om) U_\dagger^\top v$ satisfies $\tilde v_i = 0$ for all $i > 2^m n$. Transposing, we see that the matrix $C^\dagger(\om,\tau)$ in \eqref{defcoeffmom} is well-defined and satisfies $C^\dagger_{n,k}(\om,\tau) = 0$ for all $k > n2^N$.
\smallskip

Define, for $s>0$, the stochastic matrix $\Ps^\dagger_s(\om)\coloneqq (p_{i,j}^\dagger(\om,s))_{i,j\in\Nb_0^\dagger}$ via 
 \[p_{i,j}^\dagger(\om,s)\coloneqq \Pb(R_0^\om(s-)=j\mid R_0^\om(0-)=i).\]
 Hence, defining $\rho(y)\coloneqq (y^i)_{i\in\Nb_0^\dagger}$, $y\in[0,1]$ (with the convention $y^\dagger\coloneqq 0$), we obtain
 \begin{equation}\label{gfrt}
   \mathbb{E}[y^{R_0^\omega(\tau-)} \mid R_0^\omega(0-)=n] = (\Ps_\tau^\dagger(\omega) \rho(y))_n=(\Ps_\tau^\dagger(\om)U_\dagger {S_\dagger(y)})_n,
 \end{equation}
where we used that $\rho(y)=U_\dagger {S_\dagger(y)}$ with ${S_\dagger(y)\coloneqq}(S_k^\dagger(y))_{k\in\Nb_0^\dagger}$.
Thus, Theorem \ref{thmf1} and Eq. \eqref{gfrt} yield 
\begin{equation}\label{mxt}
\mathbb{E} [ (1-X^\omega(0))^n\mid X^\omega(-\tau)=x ] =  \sum_{k = 0}^{\infty} \left(\Ps_\tau^\dagger(\om)U_\dagger\right)_{n,k} S_k^\dagger(1-x).
\end{equation}
Now, consider the semi-group $M_\dagger\coloneqq (M_\dagger(s))_{s\geq 0}$ of the line-counting process of the k-ASG in the null environment, which is defined via $M_{\dagger}(s)\coloneqq \exp (sQ_\dagger^0 )$. Thanks to Lemma \ref{diag}, $M_\dagger(\beta)=U_\dagger E_\dagger(\beta)V_\dagger$, where $E_\dagger(\beta)$ is the diagonal matrix with diagonal entries $(e^{-\lambda^{\dagger}_j \beta})_{j\in\Nb_0^\dagger}$. 
\smallskip

Assume first that $N(\tau)=0$ (i.e. $\om$ has no jumps in $[-\tau,0]$). In this case, we have
\[\Ps_\tau^\dagger(\om)U_\dagger=M_\dagger(\tau)U_\dagger=U_\dagger E_\dagger(\tau)V_\dagger U_\dagger=U_\dagger E_\dagger(\tau)=C^\dagger(\om,\tau),\]  
where we used that $V_\dagger U_\dagger=Id$. Hence, \eqref{exprmomtpst} follows from \eqref{mxt}.
\smallskip

Assume now that $N(\tau)\geq 1$ (i.e. $\om$ has at least one jump in $[-\tau,0]$). Disintegrating with respect to the values of $R_0^\om((-T_i)-)$ and $R_0^\om(-T_i)$, $i\in [N]$, we get
 \begin{equation}\label{ptdec}
  \Ps_\tau^\dagger(\om)=M_\dagger(-T_1)\Bs(\Delta\om (T_1))M_\dagger(T_1-T_2)\Bs(\Delta\om(T_2))\cdots \Bs(\Delta\om(T_N))M_\dagger(T_N+\tau).
 \end{equation}
Using this, the relation $M_\dagger(\beta)=U_\dagger E_\dagger(\beta)V_\dagger$, the definition of the matrices $\Phi^\dagger$ and $A_i^\dagger$ (see \eqref{defbetaki} and \eqref{defmatA}), and the fact that {$V_\dagger U_\dagger=Id$}, we obtain
\begin{align}
\Ps_\tau^\dagger(\om)U_\dagger&=U_\dagger E_\dagger(-T_1) \Phi^\dagger(\Delta \om(T_1))^\top E_\dagger(T_1-T_2)\Phi^\dagger(\Delta \om(T_2))^\top\cdots\Phi^\dagger(\Delta \om(T_N))^\top E_\dagger(T_N+\tau) \nonumber \\
 &= U_\dagger A_1^\dagger(\om)^\top A_2^\dagger(\om)^\top\cdots A_N^\dagger(\om)^\top E_\dagger(T_N+\tau) \nonumber \\
 &= U_\dagger \left[A_N^\dagger(\om)A_{N-1}^\dagger(\om)\cdots A_1^\dagger(\om)\right]^\top E_\dagger(T_N+\tau)=C^\dagger(\om,\tau), \label{ptut=ct}
\end{align}
which proves \eqref{exprmomtpst} also in this case. 
\smallskip

It remains to prove that $C^\dagger_{n,0}(\om,\tau)$ converges to $\Pi_n(\om)$ as $\tau\to\infty$. For $\om=\ze$ (i.e. the null environment), {on one hand \eqref{defcoeffmom} yields $C_{n,0}^\dagger(\om,\tau)=e^{-\lambda_0^\dagger \tau} u_{n,0}^\dagger$ and on the other hand \eqref{gfrt} together with $M_\dagger(\beta)=U_\dagger E_\dagger(\beta)V_\dagger$ and $V_\dagger U_\dagger=Id$} yields
\[\mathbb{E}[y^{R_0^\ze(\tau-)} \mid R_0^\ze(0-)=n]=\sum_{k=0}^n e^{-\lambda_k^\dagger \tau} u_{n,k}^\dagger S_k^\dagger(y).\]
Since $\lambda_k^\dagger>0$ for $k\in\Nb$ and $\lambda_0^\dagger=0$, the desired convergence follows by letting $\tau \to\infty$ in the previous identity. {For later use, note that we have $\Pi_n(\ze)=u_{n,0}^\dagger$.} The general case is a direct consequence of the following proposition.
\end{proof}
\begin{proposition} \label{approxmn} 
Assume that $\sigma=0$, $\theta >0$, $\nu_0, \nu_1 \in (0,1)$, and $\om$ is a simple environment. We have
\[ \left | C^\dagger_{n,0}(\om,\tau) - \Pi_n(\omega) \right | \leq e^{-\theta\nu_0 \tau}. \]
\end{proposition}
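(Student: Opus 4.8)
The plan is to exploit the matrix identity $C^\dagger(\om,T)=\Ps_T^\dagger(\om)U_\dagger$, established in the proof of Theorem~\ref{x0condpastenvII} for every simple environment (both in the case with and without jumps), in order to give $C^\dagger_{n,0}(\om,T)$ a transparent probabilistic meaning, and then to compare it with $W_n(\om)$ by reusing the exponential tail estimate already obtained in the proof of Theorem~\ref{x0condpastenv}. Writing out the $(n,0)$-entry of the product gives
\[
C^\dagger_{n,0}(\om,T)=\sum_{j\in\Nb_0^\dagger}p_{n,j}^\dagger(\om,T)\,u_{j,0}^\dagger,
\]
where $p_{n,j}^\dagger(\om,T)=\Pb^\om(R_0(\om,T-)=j\mid R_0(\om,0-)=n)=\tilde\mu_T(\om)(j)$ with $\mu=\delta_n$. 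The coefficients $u_{j,0}^\dagger$ are precisely the null-environment absorption probabilities $W_j(0)$, as recorded in the remark following Theorem~\ref{x0condpastenvII}; in particular $u_{0,0}^\dagger=1$, $u_{\dagger,0}^\dagger=0$, and $u_{j,0}^\dagger\in[0,1]$ for every $j\geq1$. Thus $C^\dagger_{n,0}(\om,T)$ is the probability that the killed ASG, run under $\om$ on the backward interval $[0,T)$ and then under the null environment, is eventually absorbed at $0$.

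First I would extract from this expression the two-sided estimate
\[
\tilde\mu_T(\om)(0)\ \leq\ C^\dagger_{n,0}(\om,T)\ \leq\ \tilde\mu_T(\om)(0)+\sum_{j\geq1}\tilde\mu_T(\om)(j)=\tilde\mu_T(\om)(0)+\tilde\mu_T(\om)(\Nb),
\]
the lower bound coming from keeping only the nonnegative $j=0$ term and the upper bound from $u_{j,0}^\dagger\leq1$. Next I would recall the tail estimate from the proof of Theorem~\ref{x0condpastenv}: since $\theta\nu_0>0$, the absorption time of $R_0(\om,\cdot)$ at $\{0,\dagger\}$ is stochastically dominated by an exponential random variable of parameter $\theta\nu_0$, so that $\tilde\mu_T(\om)(\Nb)=\Pb^\om_{\delta_n}(R_0(\om,T-)\in\Nb)\leq e^{-\theta\nu_0T}$. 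This gives
\[
\tilde\mu_T(\om)(0)\ \leq\ C^\dagger_{n,0}(\om,T)\ \leq\ \tilde\mu_T(\om)(0)+e^{-\theta\nu_0T}.
\]

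Finally I would invoke the analogous sandwich for $W_n(\om)$ already derived in \eqref{absetmu0} (taken with $\mu=\delta_n$), namely $\tilde\mu_T(\om)(0)\leq W_n(\om)\leq\tilde\mu_T(\om)(0)+e^{-\theta\nu_0T}$, and subtract the two pairs of inequalities: both $C^\dagger_{n,0}(\om,T)$ and $W_n(\om)$ lie in the same interval $[\tilde\mu_T(\om)(0),\,\tilde\mu_T(\om)(0)+e^{-\theta\nu_0T}]$ of length $e^{-\theta\nu_0T}$, so their difference is bounded by $e^{-\theta\nu_0T}$, which is the claim. There is no serious obstacle once $C^\dagger_{n,0}(\om,T)$ is recognized as a hybrid absorption probability; the only point requiring care is the identification $u_{j,0}^\dagger=W_j(0)$ together with the boundary values at $0$ and $\dagger$, after which everything reduces to the already-proven tail estimate $\tilde\mu_T(\om)(\Nb)\leq e^{-\theta\nu_0T}$.
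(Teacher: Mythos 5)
Your proof is correct and follows essentially the same route as the paper: both arguments reduce to sandwiching $C^\dagger_{n,0}(\om,T)$ and $W_n(\om)$ in the common interval $[\tilde\mu_T(\om)(0),\,\tilde\mu_T(\om)(0)+e^{-\theta\nu_0 T}]$ via the exponential bound on the non-absorption probability and the identification $u_{j,0}^\dagger=W_j(0)$. The only cosmetic difference is that the paper obtains the sandwich for $C^\dagger_{n,0}(\om,T)$ by recognizing it as $W_n(\om_T)$ for the glued environment $\om_T$ and then applying \eqref{absetmu0} to $\om_T$, whereas you expand the $(n,0)$-entry of $\Ps_T^\dagger(\om)U_\dagger$ directly and bound the coefficients $u_{j,0}^\dagger\in[0,1]$ — the same computation in a slightly different packaging.
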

\begin{proof}
Let $\om_\tau$ be the environment that coincides with $\om$ in $(-\tau,\infty)$ and that is constant and equal to $\om(-\tau)$ in $(-\infty,-\tau]$ (which means that $\om_\tau$ has no jumps in $(-\infty,-\tau]$). Since $\Ps_\tau^\dagger(\om_\tau)=\Ps_\tau^\dagger(\om)$ and $\om_\tau$ has no jumps in  $(-\infty,-\tau]$, we obtain
\begin{align}\label{wow}
 \Pi_n(\om_\tau)&=\sum_{k\geq 0}p_{n,k}^\dagger(\om_\tau,\tau)\,\Pb(\exists \beta\geq \tau \ \text{s.t.} \ R_0^{\om_\tau}(\beta)=0 \mid R_0^{\om_\tau}(\tau-)=k)\nonumber\\
 &=\sum_{k\geq 0}p_{n,k}^\dagger(\om,\tau)\,\Pi_k(\ze){=\sum_{k\geq 0}p_{n,k}^\dagger(\om,\tau)\,u_{k,0}^\dagger}=(\Ps_\tau^\dagger(\om)U_\dagger)_{n,0}=C_{n,0}^\dagger(\om,\tau),
\end{align}
where in the last line we used {$\Pi_k(\ze)=u_{k,0}^\dagger$ (from the end of the previous proof) and \eqref{ptut=ct}}.
Now combining \eqref{wow} with \eqref{absetmu0} applied to $\om_\tau$ with $\upsilon=\delta_n$ yields
\begin{equation}\label{coefetmu0}
 p_{n,0}^\dagger(\om,\tau)=p_{n,0}^\dagger(\om_\tau,\tau)\leq C_{n,0}^\dagger(\om,\tau) \leq p_{n,0}^\dagger(\om_\tau,\tau)+e^{-\theta\nu_0 \tau}=p_{n,0}^\dagger(\om,\tau)+e^{-\theta\nu_0 \tau}.
\end{equation}
Then, combining \eqref{absetmu0} applied to $\om$ with $\upsilon=\delta_n$ and \eqref{coefetmu0}, we get
\[C_{n,0}^\dagger(\om,\tau)-e^{-\theta\nu_0 \tau}\leq \Pi_n(\om)\leq C_{n,0}^\dagger(\om,\tau)+e^{-\theta\nu_0 \tau},\]
and the result follows. 
\end{proof}
\begin{remark}
If $\om$ has no jumps in $(-\tau,0)$, then $C^\dagger(\om,\tau)=U_\dagger\exp(\tau D_\dagger)$. In particular, $\Pi_n(\ze)=u_{n,0}^\dagger$.
\end{remark}
\begin{remark} 
Under the assumptions of Theorem \ref{thmf3} the Simpson index (see Remark \ref{simpsonindexannealed}) is given by 
\[ \mathbb{E}[\Sim^\om(\infty)]=\mathbb{E}[X^\om(\infty)^2 + (1-X^\om(\infty))^2] = 1 - 2 C_{1,0}^\dagger(\omega,\infty) + 2C_{2,0}^\dagger(\omega,\infty). \]
\end{remark}

\begin{remark} \label{periodic2}
If $\omega$ is a simple periodic environment with period $T_p > 0$, then \eqref{exprfctgenlt4infmom} can be re-written as $\Pi_n(\omega) =\lim_{m \to \infty} (U_\dagger B(\om)^m )_{n,0}$ where $B(\om) \coloneqq  [A_{N(T_p)}^\dagger(\om)A_{N(T_p)-1}^\dagger(\om)\cdots A_1^\dagger(\om)]^\top$. 
\end{remark}
As an application of Theorem \ref{thmf3} we obtain the following refinement of Proposition \ref{prop2.5} for mixed environments composed of a pure-jump subordinator $J$ and a simple environment $\om$ (see Fig. \ref{fig:mixed}).
\begin{proposition}\label{mixf4}
Assume that {$\sigma=0$,} $\theta>0$ and $\nu_0,\nu_1\in(0,1)$. For any $\tau_\star>0$, $n\in\Nb$, $x\in[0,1]$, and any simple environment $\om$, we have
\begin{equation}\label{formulecompacteexw}
\lim_{\tau\to\infty}\Eb\left[(1-X^{J\otimes_{\tau_\star} \om}(0))^n\mid X^{J\otimes_{\tau_\star} \om}(-\tau)=x\right]= \sum_{j = 0}^{n 2^N} \left ( \sum_{k = j}^{n 2^N} C^\dagger_{n,k}(\om,\tau_\star) v_{k,j}^\dagger \right ) \pi_j,
\end{equation}
where $N$ denotes the number of jumps of $\om$ in $[-\tau_\star,0]$.
\end{proposition}
\begin{proof}
Let $\om$ be a simple environment. Proceeding as in the proof of Proposition \ref{prop2.5}, but using Theorem \ref{thmf1} instead of Theorem \ref{thm2.3}, we obtain
 \begin{equation*}
\lim_{\tau\to\infty}\Eb\left[(1-X^{J\otimes_{\tau_\star} \om}(0))^n\mid X^{J\otimes_{\tau_\star} \om}(-\tau)=x\right]=\E\left[\pi_{R_0^{\om}(\tau_\star -)}\mid R_0^\om(0-)=n\right].
\end{equation*}
Since $U^\dagger V^\dagger= Id$ and the stochastic matrix $\Ps^\dagger_{\tau_\star}(\om)\coloneqq (p_{i,j}^\dagger(\om,{\tau_\star}))_{i,j\in\Nb_0^\dagger}$ defined via \[p_{i,j}^\dagger(\om,{\tau_\star})\coloneqq \Pb(R_0^\om({\tau_\star}-)=j\mid R_0^\om(0-)=i),\] 
satisfies $C^\dagger(\om,{\tau_\star})=\Ps_{\tau_\star}^\dagger(\om)U_\dagger$ (see \eqref{ptut=ct}), the results follows.
\end{proof}

\subsection{Extensions of quenched results in Section \ref{s26}}\label{s72}
In this section we assume that $\sigma=0$ and extend some of the quenched results stated in Section \ref{s26} about the ancestral type distribution for simple environments. The next result allows us to get rid of the condition $\theta\nu_0>0$ in Theorem \ref{thm2.6}-(2). 
 
\begin{theorem}[Ancestral type distribution for simple environments]\label{thmfa}
Assume that $\sigma=0$ and let $\om\in\Db^\star$ be a simple environment with infinitely many jumps in $[0,\infty)$ and such that the distance between the successive jumps does not converge to $0$. Then the statement of {Theorem \ref{thm2.6}-(2),} excepting by Eq. \eqref{approxh(x)4}, remains true.
\end{theorem}
\begin{proof}
The case $\theta\nu_0>0$ is already covered by Theorem \ref{thm2.6}-(2). Assume now that $\theta \nu_0=0$, $\sigma=0$, and that $\om$ is as in the statement. For $\mu\in\Ms_1(\mathbb{N})$, we denote by $\mu^\omega_T(\beta)$ the distribution of $L_T^\omega(\beta-)$ given that $L_T^\omega(0-)\sim\mu$. We claim that, for all $\mu,\tilde\mu\in\Ms_1(\Nb)$,
\begin{equation}\tag{Claim 5}\label{tvd}
d_{TV}(\mu_T^\om(T),\tilde\mu_T^\om(T))\xrightarrow[T\to\infty]{}0, 
\end{equation}
{where $d_{TV}(\mu_1,\mu_2)$ stands for the total variation distance between $\mu_1$ and $\mu_2$.} If \eqref{tvd} is true, the rest of the proof follows as in the proof of Theorem \ref{thm2.6}-(2). In what follows we prove  \eqref{tvd}.

\smallskip
Let $0 < T_1 < T_2 <\cdots$ be the sequence of the jump times of $\omega$ and set $T_0 \coloneqq  0$ for convenience. On $(T_i, T_{i+1})$, $L_T^\omega$ has transition rates given by $(q^0(k,j))_{k,j\in\Nb}$ (see \eqref{kratespldasg}). For any $k > l$, let $H(k,l)$ denote the hitting time of $l$ by a Markov chain starting at $k$ and with transition rates given by $(q^0(i,j))_{i,j\in\Nb}$. Let $(S_l)_{l \geq 2}$ be a sequence of independent exponential random variables with parameter $(l-1) \theta\nu_1 + l(l-1)/2$ and note that $S_l \sim H(l,l-1)$ for $l\geq 2$. Using the Markov property, one can easily see that $H(k,1)$ is equal in distribution to $\sum_{l=2}^k S_l$. Therefore, for any $i$ such that $T_{i+1} < T$ and any $k \geq 1$, we have 
\begin{equation}\label{bou1}
\mathbb{P} \left (L_T^\omega((T-T_i)-) = 1 \mid L_T^\omega(T-T_{i+1}) = k \right ) {=} \mathbb{P} \left ( \sum_{l=2}^k S_l \leq T_{i+1} - T_{i} \right ) \geq \mathbb{P} \left ( \sum_{l=2}^{\infty} S_l \leq T_{i+1} - T_{i} \right ). 
\end{equation}
Clearly $\sum_{l=2}^{\infty} \mathbb{E}[S_l] < \infty$ so $S^\infty\coloneqq \sum_{l=2}^{\infty} S_l<\infty$ almost surely. Moreover, since, for $l \geq 2$, the support of $S_l$ contains $0$, the support of $S^\infty$ contains $0$ as well. In particular $q(s) \coloneqq   \mathbb{P} ( \sum_{l=2}^{\infty} S_l \leq s)$ is positive for all $s > 0$. Thus, we obtain from \eqref{bou1} that, for $k\geq1$, 
\begin{eqnarray}
 \mathbb{P} \left (L_T^\omega((T-T_i)-) = 1 \mid L_T^\omega(T-T_{i+1}) = k \right ) \geq q(T_{i+1} - T_{i}). \label{probatouch0}
\end{eqnarray}
Let $L_T^\om$, $\tilde L_T^\om$ and $L_{T_i}^\om$, $i\geq 0$, be independent realizations of the line-counting process of the pLD-ASG with environment $\om$ (the subscript indicates the sampling time) and $L_T^\om(0-)\sim \mu$, $\tilde L_T^\om(0-)\sim\tilde \mu$, and $L_{T_i}^\om(0-)=1$. Let 
$i(T)\defeq \max\{i\in\Nb_0: T_i<T,\, L_T^\om((T-T_i)-)=\tilde L_T^\om((T-T_i)-)=1\}$, with the convention that the maximum of an empty set is $-\infty$. Set $T_{-\infty}\coloneqq-\infty$ for convenience. We define $(U_T^\omega(\beta))_{\beta \geq 0}$ and $(\tilde U_T^\omega(\beta))_{\beta \geq 0}$ by setting $U_T^\omega(\beta)\coloneqq L_T^\omega(\beta)$ and $\tilde U_T^\omega(\beta)\coloneqq \tilde L_T^\omega(\beta)$ for $\beta<T-T_{i(T)}$ and $U_T^\omega(\beta)\coloneqq\tilde U_T^\omega(\beta)=L_{T_{i(T)}}^\om(\beta-(T-T_{i(T)}))$ for $\beta \geq T-T_{i(T)}$.
Note that $U_T^\om$ and $\tilde U_T^\omega$ have the same distribution as $L_T^\om$ and $\tilde L_T^\om$, respectively. In particular, we have $U_T^\omega(T-) \sim \mu_{T}^\omega(T)$ and $\tilde U^\omega_T(T-)\sim \tilde \mu^{\omega}_T(T)$. Moreover, we have $U_T(\omega,\beta)=\tilde U_T(\omega,\beta)$ for all $\beta \geq T-T_{i(T)}$. Therefore, 
\begin{eqnarray}
d_{TV}(\mu_T^\om(T),\tilde\mu_T^\om(T)) \leq \mathbb{P} \left ( U_T^\omega(T-) \neq \tilde U_T^\omega(T-) \right ) \leq \mathbb{P} \left ( i(T) = -\infty \right ). \label{rapprochementsm}
\end{eqnarray}
Let $N(T)$ be the number of jumps of $\om$ in $[0,T]$. According to \eqref{probatouch0}, we have, for $k_1, k_2 \geq 1$ with $k_1 \neq k_2$, 
\begin{align*}
 \mathbb{P} &\left (L_T^\omega((T-T_i)-)= 1,\, \tilde L_T^\omega((T-T_i)-) = 1 \mid  L_T^\omega(T-T_{i+1}) = k_1, \tilde L_T^\omega(T-T_{i+1}) = k_2 \right ) \geq q(T_{i+1} - T_{i})^2.
\end{align*}
Therefore, using \eqref{rapprochementsm}, we obtain
\[ d_{TV}(\mu_T^\om(T),\tilde\mu_T^\om(T)) \leq \mathbb{P} \left ( I_0(T) = -\infty \right ) \leq \prod_{i=1}^{N(T)} \left ( 1-q(T_{i} - T_{i-1})^2 \right ) =: \varphi_{\omega}(T). \]
Note that $\varphi_{\omega}$ does not depend on $\mu$ and $\tilde \mu$. Recall that by assumption the sequence of jump times $T_1, T_2,\ldots$ is infinite and the distance between the successive jumps does not converge to $0$. Therefore, there is $\epsilon > 0$ such that, for infinitely many indices $i$, we have $T_{i+1}-T_i > \epsilon$. Thus, the number of factors smaller than $1-q(\epsilon)^2 < 1$ in the product defining $\varphi_{\omega}(T)$ converges to infinity as $T\to\infty$. We deduce that $\varphi_{\omega}(T)\to 0$ as $T\to\infty$, which proves \eqref{tvd}, concluding the proof. 
\end{proof}
The following explicit diagonalization of the matrix $Q^0$ (the transition
matrix of the process $L$ under the null environment) will allow us to obtain a more explicit expression for $h^{\omega}_T(x)$.
\begin{lemma}\label{diagpldasg}
Assume that $\sigma=0$ and set, for $k\in\Nb$, $\lambda_k\coloneqq -q^0(k,k)$, and, for $k\in\Nb$, $\gamma_k\coloneqq q^0(k,k-1)$, {where $q^\mu(\cdot,\cdot)$ is defined in \eqref{kratespldasg}}. In addition, let 
 \begin{itemize}
  \item[(i)] $D$ be the diagonal matrix with diagonal entries $(-\lambda_i)_{i\in\Nb}$. 
  \item[(ii)] $U\coloneqq (u_{i,j})_{i,j\in\Nb}$ where, for all $i\in\Nb$, $u_{i,i} \coloneqq 1$, $u_{i,j} \coloneqq 0$ for $j > i$ and, when $i \geq 2$, $u_{i,i-1} \coloneqq  \gamma_{i}/(\lambda_{i} - \lambda_{i-1})$ and the coefficients $(u_{i,j})_{j \in [i-2]}$ are defined via the recurrence relation 
\begin{eqnarray}
u_{i,j} \coloneqq  \frac{1}{\lambda_i - \lambda_j} \left ( \gamma_i {u_{i-1,j}} + \theta\nu_0 \left ( \sum_{l=j}^{i-2} {u_{l,j}} \right ) \right ). \label{recrelalphani4killed}
\end{eqnarray}
\item[(iii)] $V\coloneqq (v_{i,j})_{i,j\in\Nb}$ where, for all $i\in\Nb$, $v_{i,i} \coloneqq 1$, $v_{i,j} \coloneqq 0$ for $j > i$ and, when $i \geq 2$, the coefficients $(v_{i,j})_{j \in [i-1]}$ are defined via the recurrence relation 
\begin{eqnarray}
v_{i,j} \coloneqq  \frac{-1}{(\lambda_i - \lambda_j)} \left [ \left ( \sum_{l=j+2}^i v_{i,l} \right ) \theta\nu_0 + v_{i,j+1} \gamma_{j+1} \right ]. \label{recrelani4killed}
\end{eqnarray}
 \end{itemize}
with the convention that an empty sum equals $0$. Then, we have 
\[Q=U D V\quad \textrm{and}\quad U V=V U=Id.\]
\end{lemma}

\begin{proof}
Analogous to the proof of Lemma \ref{diag}. 
\end{proof}

Now, we consider the polynomials $S_k, k \in\Nb$ defined via
\begin{eqnarray}
S_k(x) \coloneqq {\sum_{i=1}^{k} v_{k,i} x^i}. \label{newbasis9pldasg}
\end{eqnarray}
In addition, for $z\in(0,1)$, we define the matrices $\Bs(z)\coloneqq (\Bs_{i,j}(z))_{i,j\in\Nb}$ and $\Phi(z)\coloneqq (\Phi_{i,j}(z))_{i,j\in\Nb}$ via

\begin{equation} \label{defbetakipldasg}
 \Bs_{i,j}(z)\coloneqq \Pb(i+B_i(z)=j),\quad \textrm{$i,j\in\Nb$}\quad\textrm{and}\quad \Phi(z)\coloneqq U^\top \Bs(z)^\top V^\top,
\end{equation}
where $B_i(z)\sim\bindist{i}{z}$. The fact that the matrix product defining $\Phi(z)$ is well-defined can be justified similarly as in the proof of Theorem \ref{thmf3}. The same is true for the matrix products in \eqref{defmatApldasg} and \eqref{defcoeff4}.
\begin{theorem} \label{thmf4}
Assume that $\sigma=0$ and let $\om$ be a simple environment with infinitely many jumps on $[0,\infty)$ and such that the distance between the successive jumps does not converge to $0$. Let $N$ be the number of jumps of $\om$ in $(0,T)$ and let $(T_i)_{i=1}^N$ be {the sequence of the jump times} in increasing order. We set $T_0 \coloneqq  0$ for convenience. For any $m\in[N]$, we define the matrix $A_m(\omega)\coloneqq (A_{i,j}^{m}(\omega))_{i,j\in\Nb}$ by 
\begin{eqnarray} 
A_m(\omega) \coloneqq  \exp \left ( (T_m - T_{m-1}) D \right ) \Phi(\Delta \omega(T_m)). \label{defmatApldasg}
\end{eqnarray}
Then for all $x \in (0,1), n \in \mathbb{N}$, we have 
\begin{eqnarray} 
h^{\omega}_T(x) = 1 - \sum_{k = 1}^{2^N} C_{1,k}(\om,T) S_k(1-x), \label{exprfctgenlt4}
\end{eqnarray}
where the matrix $C(\om,T)\coloneqq (C_{n,k}(\om,T))_{k,n\in\Nb}$ is given by
\begin{align} 
C(\om,T)\coloneqq U \exp \left ( (T-T_N)D \right ) \left[A_1(\om)A_{2}(\om)\cdots A_N(\om)\right]^\top. \label{defcoeff4}
\end{align}
Moreover, for any $x\in(0,1)$,
\begin{eqnarray} 
 \ h^{\omega}(x) = 1 - \sum_{k = 1}^{\infty} C_{1,k}(\omega, \infty) S_k(1-x), \label{exprfctgenlt4inf}
\end{eqnarray}
where the series in \eqref{exprfctgenlt4inf} is convergent 
and where 
\begin{align} 
C_{1,k}(\omega, \infty) \coloneqq  {\lim_{m \rightarrow \infty}} \left ( U \left[A_1(\om)A_{2}(\om)\cdots A_m(\om)\right]^\top \right )_{1,k}, \label{defcoeff4inf}
\end{align}
and the above limit is well-defined. 
\end{theorem}

\begin{proof}
We are interested in the generating function of $L_T^\omega(T-)$.
For $s>0$, we define the stochastic matrix $\Ps^T_s(\om)\coloneqq (p^T_{i,j}(\om,s))_{i,j\in\Nb}$ via
\[p^T_{i,j}(\om,s)\coloneqq \Pb(L_T^\om(s-)=j\mid L_T^\om(0-)=i).\]
 We also define $(M(s))_{s\geq 0}$ via $M(s)\coloneqq \exp (sQ^0 )$, i.e $M$ is the semi-group of $L^{\ze}$. Let $T_1<T_{2}<\cdots<T_N$ be the sequence of jump times of $\om$ in $[0,T]$. Disintegrating with respect to the values of $L_T^\om( (T-T_i)-)$ and $L_T^\om( T - T_i)$, $i\in [N]$, we obtain 
 \begin{equation}\label{ptdecpldasg}
  {\Ps^T_T(\om)}=M(T-T_N)\Bs(\Delta\om (T_N))M(T_N-T_{N-1})\Bs(\Delta\om(T_{N-1}))\cdots \Bs(\Delta\om(T_1))M(T_1).
 \end{equation}
 In addition,
 \begin{equation}\label{gfrtpldasg}
   \mathbb{E}[y^{L_T^\omega(T-)} \mid L_T^\omega(0-)=n] = ({\Ps^T_T(\om)} \rho(y))_n,\quad \textrm{where}\quad \rho(y)\coloneqq (y^i)_{i\in\Nb}.
 \end{equation}
Thanks to Lemma \ref{diagpldasg}, we have $M(\beta)=U E(\beta)V$, where $E(\beta)$ is the diagonal matrix with diagonal entries $(e^{-\lambda_j \beta})_{j\in\Nb}$. Moreover, $\rho(y)=U {S(y)}$, where {$S(y)\coloneqq(S_k(y))_{k\in\Nb}$.} Using this together with Eq. \eqref{ptdecpldasg} and the relations $M(\beta)=U E(\beta)V$ and {$V U=Id$}, we obtain 
\begin{equation}\label{gfrtpldasg2}
 {\Ps^T_T(\om)} \rho(y)=U E(T-T_N) \Phi(\Delta \om(T_N))^\top E(T_N-T_{N-1})\Phi(\Delta \om(T_{N-1}))^\top\cdots\Phi(\Delta \om(T_1))^\top E(T_1){S(y)}. 
 \end{equation}
 Thus, using the definition of the matrices $A_i(\om)$, we get
\begin{align}
 {\Ps^T_T(\om)} \rho(y)&= U E(T-T_N) A_N(\om)^\top A_{N-1}(\om)^\top\cdots A_1(\om)^\top {S(y)} \nonumber \\
 &= U E(T-T_N) \left[A_1(\om)A_{2}(\om)\cdots A_N(\om)\right]^\top {S(y)}=C(\om,T){S(y)}. \nonumber 
\end{align}

Now, using the previous identity, Lemma \ref{hTq} and Eq. \eqref{gfrtpldasg}, we obtain 
\begin{align*}
h^{\omega}_T(x)=1-\mathbb{E}\left[(1-x)^{L_T^\omega(T-)} \mid L_T^\omega(0-)=1\right] =  1 - {\sum_{k = 1}^{\infty}} C_{1,k}(\omega,T) S_k(1-x). 
\end{align*}
Proceeding as in the proof of Theorem \ref{thmf3}, one shows that $C_{1,k}(\omega,T)=0$ for $k > 2^N$, and \eqref{exprfctgenlt4} follows. 
\smallskip

Let us now analyze $C_{1,k}(\omega,T)$ as $T\to\infty$. First, note that, on the one hand, from \eqref{exprfctgenlt4}, we have 
\[\Eb[y^{L_T^\omega(T-)}\mid L_T^\omega(0-)=1]=\sum_{k=1}^{\infty} C_{1,k}(\omega,T) S_k(y).\]
On the other hand, we have
\[\Eb[y^{L_T^\omega(T-)}\mid L_T^\omega(0-)=1]=\sum_{k=1}^{\infty} \mathbb{P}({L_T^\omega(T-)=k \mid L_T^\omega(0-)=1}) y^k.\]
Since $U^\top$ is the transition matrix from the basis $(y^k)_{k \in \Nb}$ to the basis $(S_k(y))_{k \in \Nb}$, we deduce that 
\begin{equation}\label{hamma}
C_{1,k}(\omega,T) = \sum_{i \in \Nb} u_{i,k} \mathbb{P}(L_T^\omega(T-)=i \mid L_T^\omega(0-)=1) = \mathbb{E} [ u_{L_T^\omega(T-),k} \mid L_T^\omega(0-)=1 ].
\end{equation}
From Theorem \ref{thmfa}, we know that the distribution of $L_T^\omega(T-)$ converges when $T\to\infty$. In addition, {according to} Lemma \ref{majoalphaki} the function $i \mapsto u_{i,k}$ is bounded, and hence $C_{1,k}(\omega,T)$ converges to a real {number.}
Recall that $T_1 < T_2 < \cdots$ is the increasing sequence of the jump times of $\om$ and that this sequence converges to infinity. Therefore 
\[ \lim_{T \rightarrow \infty} C_{1,k}(\omega,T) = \lim_{m \rightarrow \infty} C_{1,k}(\omega,T_m) = \lim_{m \rightarrow \infty} {\left ( U \left[A_1(\om)A_{2}(\om)\cdots A_m(\om)\right]^\top \right )_{1,k}}, \] 
where we used \eqref{defcoeff4} in the last step. This shows that the limit on the right hand side of \eqref{defcoeff4inf} exists and equals $\lim_{T \rightarrow \infty} C_{1,k}(\omega,T)$. 

It remains to prove \eqref{exprfctgenlt4inf} together with the convergence of the corresponding series. We already know from Theorem \ref{thmfa} that $h^{\omega}_T(x)$ converges to $h^{\omega}(x)$ when $T\to\infty$ and we have just proved {\eqref{exprfctgenlt4} and} that for any $k \geq 1$, $C_{1,k}(\omega,T)$ converges to $C_{1,k}(\omega,\infty)$, defined in \eqref{defcoeff4inf}, when $T\to\infty$. Now, we claim that, for all $y\in[0,1]$ and $T>T_1$, 
\begin{equation}\tag{Claim 6}
 | C_{1,k}(\omega,T) S_k(y) | \leq 4^k \times (2ek)^{(k+\theta)/2} e^{- \lambda_k T_1}. \label{bornecvdom}
\end{equation}
Assume that \eqref{bornecvdom} is true. Then \eqref{exprfctgenlt4inf} and the convergence of the series follow using the dominated convergence theorem. It only remains to prove \eqref{bornecvdom}. As in the proof of \eqref{hamma}, one shows that
\[{\Ps^T_{(T-T_1)+}(\omega)} \rho(y)=E[y^{L_T^\omega(T-T_1)}\mid L_T^\omega(0-)=1]=\sum_{k=1}^{\infty} \tilde{C}_{1,k}(\omega,T) S_k(y),\]
where {$\tilde{C}_{1,k}(\omega,T) = \mathbb{E} [ u_{L_T^\omega(T-T_1),k} \mid L_T^\omega(0-)=1 ]$}. Proceeding as in the proof of \eqref{gfrtpldasg2}, we can prove that 
\begin{eqnarray}
{\Ps^T_{(T-T_1)+}(\omega)} \rho(y)=U E(T-T_N) \Phi(\Delta \om(T_N))^\top E(T_N-T_{N-1})\Phi(\Delta \om(T_{N-1}))^\top\cdots\Phi(\Delta \om(T_1))^\top {S(y)}. \label{gfrtpldasg3}
\end{eqnarray}
Since $E(T_1)$ is diagonal with entries $(e^{-\lambda_j T_1})_{j\in\Nb}$, we conclude from \eqref{gfrtpldasg2} and \eqref{gfrtpldasg3} that $C_{1,k}(\omega,T) = e^{-\lambda_k T_1} \tilde C_{1,k}(\omega,T)$. Therefore 
\[ C_{1,k}(\omega,T) = e^{-\lambda_k T_1} \mathbb{E} [ u_{L_T^\omega(T-T_1),k} \mid L_T^\omega(0-)=1 ]. \]
This together with Lemma \ref{majoalphaki} (see below) implies that, for all $k\geq 1$ and $t\geq 0$,
\[  \ |C_{1,k}(\omega,T)| \leq (2ek)^{(k+\theta)/2} e^{- \lambda_k T_1}. \]
Combining this with Lemma \ref{majosommedesaki}, we obtain \eqref{bornecvdom}, which concludes the proof. 
\end{proof}
\begin{lemma} \label{majoalphaki} For all $k\geq 1$
\[  \ \sup_{j \geq 1} u_{j,k} \leq (2ek)^{(k+\theta)/2}. \]
\end{lemma}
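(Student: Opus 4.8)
The plan is to prove the bound directly from the recurrence \eqref{recrelalphani4killed} defining the coefficients $u_{i,k}$ in Lemma \ref{diagpldasg}, exploiting that (with $\sigma=0$) the relevant rates of $Q^0$ are explicit. First I would record the elementary identities
\[ \lambda_i = (i-1)(i+\theta), \qquad \gamma_i = i(i-1) + (i-1)\theta\nu_1 + \theta\nu_0, \]
so that, using $\nu_0+\nu_1=1$,
\[ \gamma_i - \lambda_i = \theta\nu_0(2-i), \qquad \lambda_i - \lambda_k = (i-k)(i+k-1+\theta), \qquad \lambda_k + \theta\nu_0(1-k) = (k-1)(k+\theta\nu_1). \]
Fixing $k$ and writing $a_i \coloneqq u_{i,k}$ (so $a_i=0$ for $i<k$ and $a_k=1$), the recurrence reads $a_i(\lambda_i-\lambda_k) = \gamma_i a_{i-1} + \theta\nu_0\sum_{l=k}^{i-2} a_l$ for $i>k$. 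Since every quantity on the right is nonnegative, an immediate induction gives $a_i\geq 0$ for all $i$.

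Next I would track the running maximum $M_i \coloneqq \max_{k\leq l\leq i} a_l$, with $M_k=1$. Bounding $a_{i-1}\leq M_{i-1}$ and $\sum_{l=k}^{i-2} a_l \leq (i-1-k)M_{i-1}$ in the recurrence, and simplifying via $\gamma_i + \theta\nu_0(i-1-k) = \lambda_i + \theta\nu_0(1-k) = (\lambda_i-\lambda_k) + (k-1)(k+\theta\nu_1)$, one obtains $a_i \leq c_i M_{i-1}$ for $i>k$, where
\[ c_i \coloneqq 1 + \frac{(k-1)(k+\theta\nu_1)}{(i-k)(i+k-1+\theta)}; \]
the case $i=k+1$ is in fact an equality, which follows again from $\nu_0+\nu_1=1$. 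As $c_i\geq 1$, this gives $M_i\leq c_i M_{i-1}$, and since $M_i$ is nondecreasing,
\[ \sup_{j\geq 1} u_{j,k} = \lim_{i\to\infty} M_i \leq \prod_{i>k} c_i. \]

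It then remains to estimate this product. Using $\ln c_i \leq c_i-1$, the partial-fraction identity
\[ \frac{1}{(i-k)(i+k-1+\theta)} = \frac{1}{2k-1+\theta}\left(\frac{1}{i-k} - \frac{1}{i+k-1+\theta}\right), \]
and the comparison $\sum_{m\geq 1}\left(\frac{1}{m}-\frac{1}{m+a}\right)\leq 1+\ln(1+a)$ with $a=2k-1+\theta$, I would reduce the estimate to
\[ \ln\prod_{i>k} c_i \leq (k-1)(k+\theta\nu_1)\,\frac{1+\ln(2k+\theta)}{2k-1+\theta}. \]
The heart of the proof is then the purely analytic inequality
\[ (k-1)(k+\theta\nu_1)\,\frac{1+\ln(2k+\theta)}{2k-1+\theta} \leq \frac{k+\theta}{2}\,\ln(2ek), \qquad k\geq 1,\ \theta\geq 0, \]
from which $\sup_j u_{j,k}\leq (2ek)^{(k+\theta)/2}$ follows by exponentiating.

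I expect this last inequality to be the main obstacle. The prefactor comparison $\frac{(k-1)(k+\theta\nu_1)}{2k-1+\theta}\leq \frac{k+\theta}{2}$ does hold directly (cross-multiply and use $\nu_0+\nu_1=1$, $k\geq 1$), but this alone is insufficient, because the logarithmic factor on the left, $1+\ln(2k+\theta)=\ln\!\big(e(2k+\theta)\big)$, is strictly larger than $\ln(2ek)$ when $\theta>0$; one must spend the gap between the two prefactors against this excess. The clean way is to write $\ln(2k+\theta)\leq \ln(2k)+\theta/(2k)$ and then verify, by elementary algebra and once more invoking $\nu_0+\nu_1=1$ and $k\geq 1$ to fix the sign of the remainder, that the leftover term is absorbed. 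This step is routine but requires careful bookkeeping, and it is the only place where the specific constant $2ek$ (rather than, say, $2k$) is genuinely needed.
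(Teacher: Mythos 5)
Your argument is correct and follows essentially the same route as the paper's proof: control the running maximum $M_i$ through the recurrence, bound $\sup_{j}u_{j,k}$ by an infinite product of factors of the form $1+c/(\lambda_i-\lambda_k)$, and estimate the logarithm of that product by a partial-fraction/integral comparison of the sum $\sum_i(\lambda_i-\lambda_k)^{-1}$. The only divergence is in the bookkeeping at the end: the paper discards $(k-1)\theta\nu_0$ one step earlier, replacing your tighter constant $(k-1)(k+\theta\nu_1)$ by $\lambda_k=(k-1)(k+\theta)$ and dropping the $\theta$ from the denominators, so its closing step is an exact cancellation of $(k-1)$ against the sum bound $\log(2ek)/(2(k-1))$ and the extra analytic inequality you flag never arises --- I checked that your closing inequality does hold (the excess $\ln(1+\theta/(2k))\leq\theta/(2k)$ is indeed absorbed by the gap between the two prefactors), so both versions go through.
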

\begin{proof} 
Let $k \geq 1$. By the definition of the matrix $U$ in Lemma \ref{diagpldasg}, the sequence $(u_{j,k})_{j \geq 1}$ satisfies 
\[u_{j,k}  = 0 \ \text{for} \ j < k, \ u_{k,k} = 1, \ u_{k+1,k} = \frac{\gamma_{k+1}}{\lambda_{k+1} - \lambda_{k}},\]
\[u_{k+l,k}  = \frac{1}{\lambda_{k+l} - \lambda_k} \left ( \gamma_{k+l} u_{k+l-1,k} + \theta\nu_0  \sum_{j=0}^{l-2} u_{k+j,k} \right ) \ \text{for} \ l \geq 2.\] 
Let $M_k^{j} \coloneqq  \sup_{i \leq j} u_{i,k}$. By the definitions of $\gamma_{j+1}$, $\lambda_{k+1}$, $\lambda_k$ (see Lemma \ref{diagpldasg}), we have
\[\gamma_{k+1} = \lambda_{k+1}- (k-1) \theta\nu_0 > \lambda_{k+1} - \lambda_{k}.\] 
This together with the above expressions yields that 
\[M_k^k  = 1, \qquad M^{k+1}_k = \frac{\lambda_{k+1}- (k-1) \theta\nu_0}{\lambda_{k+1} - \lambda_{k}} \leq \frac{\lambda_{k+1}}{\lambda_{k+1} - \lambda_{k}} = 1 + \frac{\lambda_{k}}{\lambda_{k+1} - \lambda_k}.\]
{Moreover, for $l\geq 2$, we have \[u_{k+l,k}\leq M^{k+l-1}_k \frac{\gamma_{k+l} + (l-1) \theta\nu_0}{\lambda_{k+l}-\lambda_k}=M^{k+l-1}_k \frac{\lambda_{k+l} - (k-1) \theta\nu_0}{\lambda_{k+l}-\lambda_k}\leq M^{k+l-1}_k \frac{\lambda_{k+l}}{\lambda_{k+l}-\lambda_k}.\]
Hence, we have
\[M^{k+l}_k =M^{k+l-1}_k\vee u_{k+l,k} \leq M^{k+l-1}_k \times \frac{\lambda_{k+l}}{\lambda_{k+l} - \lambda_k} =M^{k+l-1}_k \times \left ( 1 + \frac{\lambda_{k}}{\lambda_{k+l} - \lambda_k} \right ).\] 
}
As a consequence, we have 
\begin{eqnarray}
\sup_{j \geq 1} u_{k,j} \leq \prod_{l=1}^{\infty} \left ( 1 + \frac{\lambda_{k}}{\lambda_{k+l} - \lambda_k} \right ) =: M^{\infty}_k. \label{majoparprodinf}
\end{eqnarray}
Since $\lambda_{k+l} \underset{}{\sim} l^2$ as $l\to\infty$, it is easy to see that the infinite product $M^{\infty}_k$ is finite. Then, 
\[ M^{\infty}_k = \exp \left [ \sum_{l=1}^{\infty} \log \left ( 1 + \frac{\lambda_{k}}{\lambda_{k+l} - \lambda_k} \right ) \right ] \leq \exp \left [ \sum_{l=1}^{\infty} \frac{\lambda_{k}}{\lambda_{k+l} - \lambda_k} \right ] \leq \exp \left [ \frac{\lambda_{k} \log(2ek)}{2(k-1)} \right ], \]
where we used Lemma \ref{sommeinverselambda} (see below) in the last step. Since $\lambda_{k} = (k-1)(k+\theta)$ (see Lemma \ref{diagpldasg}), the desired result follows. 
\end{proof}
\begin{lemma} \label{sommeinverselambda} For all $k\in\Nb$
\[ {\sum_{l=1}^{\infty}} \frac{1}{\lambda_{k+l} - \lambda_k} \leq \frac{\log(2ek)}{2(k-1)}. \]
\end{lemma}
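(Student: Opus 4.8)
The plan is to evaluate the denominator $\lambda_{k+l}-\lambda_k$ in closed form and then collapse the sum to a telescoping harmonic series. Recall from Lemma \ref{diagpldasg} that $\lambda_j=(j-1)(j+\theta)$. First I would expand both products and simplify, obtaining the identity
\[
\lambda_{k+l}-\lambda_k=(k+l-1)(k+l+\theta)-(k-1)(k+\theta)=l\,(l+2k+\theta-1).
\]
For $k=1$ the right-hand side of the asserted bound is $+\infty$, so there is nothing to prove (the series still converges, since $\lambda_{1+l}=l(l+\theta)\geq l^2$); hence I assume $k\geq 2$ from now on.

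Next, using $\theta\geq 0$ together with $2k-1\geq 2(k-1)$, I would bound the denominator from below by $\lambda_{k+l}-\lambda_k\geq l\,(l+2(k-1))$, so that, setting $m\coloneqq 2(k-1)\geq 2$,
\[
\sum_{l=1}^{+\infty}\frac{1}{\lambda_{k+l}-\lambda_k}\leq\sum_{l=1}^{+\infty}\frac{1}{l\,(l+m)}.
\]
The key step is the partial-fraction identity $\frac{1}{l(l+m)}=\frac{1}{m}\bigl(\frac{1}{l}-\frac{1}{l+m}\bigr)$, whose sum telescopes: the tail $\frac{1}{l+m}$ cancels $\frac{1}{l}$ for $l>m$, leaving $\sum_{l\geq 1}\frac{1}{l(l+m)}=\frac{1}{m}\sum_{j=1}^{m}\frac{1}{j}$.

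Finally I would invoke the elementary estimate $\sum_{j=1}^{m}\frac{1}{j}\leq 1+\log m$ (valid for $m\geq 1$, e.g.\ by comparison with $\int_1^m \mathrm{d}x/x$) to conclude
\[
\sum_{l=1}^{+\infty}\frac{1}{\lambda_{k+l}-\lambda_k}\leq\frac{1+\log\bigl(2(k-1)\bigr)}{2(k-1)}=\frac{\log\bigl(2e(k-1)\bigr)}{2(k-1)}\leq\frac{\log(2ek)}{2(k-1)},
\]
the last inequality following from $k-1\leq k$ and monotonicity of $\log$. The only point requiring care is the calibration of constants: the lower bound $l+2(k-1)$ must be chosen precisely so that, after telescoping, the harmonic estimate $1+\log m$ with $m=2(k-1)$ collapses exactly to $\log(2ek)$ and not to a larger expression. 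I expect the closed-form evaluation of $\lambda_{k+l}-\lambda_k$ and this final calibration to be the only non-mechanical parts, and neither constitutes a genuine obstacle.
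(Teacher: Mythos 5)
Your proof is correct and follows essentially the same route as the paper: both discard the $\theta l$ term to lower-bound $\lambda_{k+l}-\lambda_k$ by $l(l+c)$ with $c\approx 2k$, and both rest on the partial-fraction identity $\tfrac{1}{l(l+m)}=\tfrac1m\bigl(\tfrac1l-\tfrac{1}{l+m}\bigr)$. The only cosmetic difference is that you telescope the series exactly to $H_m/m$ and invoke $H_m\leq 1+\log m$, whereas the paper compares the tail of the sum with the integral $\int_1^\infty \frac{\dd u}{u(u+2k-1)}$; both calibrations land within the stated $\log(2ek)/(2(k-1))$.
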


\begin{proof}
Using the definition of $\lambda_k$ in Lemma \ref{diagpldasg} we have 
\begin{align*}
{\sum_{l=1}^{\infty}} \frac{1}{\lambda_{k+l} - \lambda_k} & \leq {\sum_{l=1}^{\infty}} \frac{1}{(k+l)(k+l-1) - k(k-1)} 
 \leq \frac{1}{(k+1)k - k(k-1)} + \int_k^{\infty} \frac{1}{x(x+1) - k(k-1)} \dd x \\
& = \frac{1}{2k} + \int_1^{\infty} \frac{1}{u^2 + (2k-1)u} \dd u = \frac{1}{2k} + \lim_{a \rightarrow \infty} \int_1^{a} \frac{1}{u(u + 2k-1)} \dd u \\
& \leq \frac{1}{2k-1}\left[1 + \lim_{a \rightarrow \infty} \left ( \int_1^{a} \frac{1}{u} du - \int_1^{a} \frac{1}{u + 2k-1} \dd u \right ) \right]\\
&= \frac{1}{2k-1}\left[1 + \lim_{a \rightarrow \infty}  \log\left(\frac{a2k}{a+2k-1}\right ) \right]\leq  \frac{\log(2ek)}{2(k-1)}. 
\end{align*}
\end{proof}
\begin{lemma} \label{majosommedesaki} For all $k\in\Nb$, we have
\[ \ \sup_{y \in [0,1]} \left | S_k(y) \right | \leq 4^{k}. \]
\end{lemma}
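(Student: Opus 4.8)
The plan is to bound $P_k$ on $[0,1]$ through its coefficients. Since $y\in[0,1]$ gives $0\le y^i\le1$, one has $|P_k(y)|\le\sum_{i=1}^{k}|v_{k,i}|$, so it suffices to prove that the coefficient sum $\Sigma_k:=\sum_{i=1}^{k}|v_{k,i}|$ is at most $4^k$, uniformly in $\theta,\nu_0,\nu_1$. (The sum starts at $i=1$ since $V=(v_{i,j})_{i,j\in\Nb}$ only defines $v_{k,i}$ for $1\le i\le k$, with $v_{k,k}=1$, and $P_k(0)=0$.) I would run an induction based on the recurrence \eqref{recrelani4killed}, reorganising the coefficients by their distance from the top: set $b_m:=|v_{k,k-m}|$ for $m=0,\dots,k-1$, so that $b_0=1$, and read \eqref{recrelani4killed} as
\begin{equation*}
b_m\le \frac{\gamma_{k-m+1}}{\lambda_k-\lambda_{k-m}}\,b_{m-1}+\frac{\theta\nu_0}{\lambda_k-\lambda_{k-m}}\sum_{i=0}^{m-2}b_i,\qquad 1\le m\le k-1.
\end{equation*}

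First I would record two elementary identities, valid for the null-environment rates of Lemma \ref{diagpldasg} (where $\sigma=0$), namely $\lambda_k=(k-1)(k+\theta)$ and $\gamma_m=m(m-1)+(m-1)\theta\nu_1+\theta\nu_0$:
\begin{equation*}
\lambda_k-\lambda_{k-m}=m(2k-m-1+\theta),\qquad \gamma_{k-m+1}=(k-m+1)(k-m)+(k-m)\theta\nu_1+\theta\nu_0.
\end{equation*}
Using $\nu_0+\nu_1=1$ and $1\le k-m$, the inequality $(k-m)\theta\nu_1+\theta\nu_0\le(k-m)\theta$ gives $\gamma_{k-m+1}\le(k-m)(k-m+1+\theta)$; together with $2k-m-1+\theta\ge k-m+1+\theta$ (valid for $k\ge2$) this yields the parameter-free bound $\gamma_{k-m+1}/(\lambda_k-\lambda_{k-m})\le (k-m)/m\le k/m$. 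Likewise $2k-m-1+\theta\ge\theta$ and $\nu_0\le1$ give $\theta\nu_0/(\lambda_k-\lambda_{k-m})\le 1/m$. Hence, uniformly in the parameters, and bounding $\sum_{i=0}^{m-2}b_i\le\sum_{i=0}^{m-1}b_i$,
\begin{equation*}
b_m\le \frac{k}{m}\,b_{m-1}+\frac1m\sum_{i=0}^{m-1}b_i,\qquad 1\le m\le k-1.
\end{equation*}

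The endgame is a telescoping estimate on the partial sums $B_m:=\sum_{i=0}^{m}b_i$. From the last display and $b_{m-1}\le B_{m-1}$ one gets $B_m=B_{m-1}+b_m\le B_{m-1}\bigl(1+\tfrac{k+1}{m}\bigr)$, so that
\begin{equation*}
\Sigma_k=B_{k-1}\le\prod_{m=1}^{k-1}\Bigl(1+\frac{k+1}{m}\Bigr)=\frac{(k+2)(k+3)\cdots(2k)}{(k-1)!}=\binom{2k}{k-1}\le 4^k,
\end{equation*}
the final inequality because $\binom{2k}{k-1}$ is a single term of $\sum_{j}\binom{2k}{j}=4^k$. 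This closes the induction and gives $\sup_{[0,1]}|P_k|\le\Sigma_k\le4^k$; the case $k=1$ (where $P_1(y)=y$) is immediate. I expect the only delicate point to be the derivation of the two $\theta$-uniform ratio bounds — in particular controlling the non-local \emph{memory} term $\theta\nu_0\sum_{l\ge j+2}v_{k,l}$ arising from the pruning (deleterious-mutation) transitions, which is precisely what forces one to track the cumulative sums $B_m$ rather than individual coefficients. Once these bounds are in place, the telescoping product and its collapse to a central-type binomial coefficient are routine.
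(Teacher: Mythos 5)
Your proof is correct and follows essentially the same route as the paper: both reduce $\sup_{[0,1]}|P_k|$ to the sum $\sum_{i=1}^k|v_{k,i}|$ and telescope a one-step multiplicative bound on cumulative sums of $|v_{k,\cdot}|$ derived from \eqref{recrelani4killed} (your $B_m$ is exactly the paper's tail sum $S^k_{k-m}$), ending at a binomial coefficient dominated by $4^k$. The only cosmetic difference is that the paper absorbs the memory term via the sign observation $\theta\nu_0-\gamma_{j+1}\le 0$ and obtains the factor $\tfrac{k+j+2}{k-j}$, whereas you bound it separately by $\tfrac1m B_{m-1}$, which yields the marginally sharper $\binom{2k}{k-1}$ in place of $\binom{2k+1}{k-1}$.
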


\begin{proof} 
By definition of the polynomials $S_k$ in \eqref{newbasis9pldasg}, we have for $k\geq 1$
\begin{eqnarray}
\ \sup_{y \in [0,1]} \left | S_k(y) \right | \leq \sum_{i=1}^{k} | v_{k,i} |. \label{sup<sumcoef}
\end{eqnarray}
Let us fix $k \geq 1$ and define $S^k_j \coloneqq  \sum_{i \geq j}^{k} | v_{k,i} |$. Note that $S^k_j = 0$ for $j > k$ and that $S^k_k = 1$ by the definition of the matrix $(v_{i,j})_{i,j \in \Nb}$ in Lemma \ref{diagpldasg}. In particular, the result is true for $k=1$. Thus, we assume that $k > 1$ from now on. Using \eqref{recrelani4killed}, we see that, for any $j \in [k-1]$, 
\begin{align*}
S^k_j = S^k_{j+1} + | v_{k,j} | & = S^k_{j+1} + \left | \frac{-1}{(\lambda_k - \lambda_j)} \left [ \left ( \sum_{l=j+2}^k v_{k,l} \right ) \theta\nu_0 + v_{k,j+1} \gamma_{j+1} \right ] \right | \\
& \leq S^k_{j+1} + \frac{1}{(\lambda_k - \lambda_j)} \left [ S^k_{j+2} \theta\nu_0 + (S^k_{j+1}-S^k_{j+2}) \gamma_{j+1} \right ] \\
& \leq \left ( 1 + \frac{\gamma_{j+1}}{\lambda_k - \lambda_j} \right ) S^k_{j+1} + \frac{\theta\nu_0 - \gamma_{j+1}}{\lambda_k - \lambda_j} S^k_{j+2}. 
\end{align*}
Note that $\frac{\theta\nu_0 - \gamma_{j+1}}{\lambda_k - \lambda_j}\leq 0$, because of the definition of the coefficients $\gamma_i$ in Lemma \ref{diagpldasg}. Thus, for $j\in[k-1]$, 
\begin{eqnarray}
\ S^k_j \leq \left ( 1 + \frac{\gamma_{j+1}}{\lambda_k - \lambda_j} \right ) S^k_{j+1}. \label{relrecskj}
\end{eqnarray}
By the definitions of $\gamma_{j+1}$, $\lambda_k$, $\lambda_j$ in Lemma \ref{diagpldasg}, and using that $j<k$, we have 
\begin{align*}
\frac{\gamma_{j+1}}{\lambda_k - \lambda_j} & = \frac{(j+1)j + j {\theta \nu_1} + \theta\nu_0}{k(k-1) - j(j-1) + (k-j) {\theta \nu_1} + (k-j) \theta\nu_0} \\
& \leq \frac{(j+1)j}{j(k-1) - j(j-1)} + \frac{j {\theta \nu_1}}{(k-j) {\theta \nu_1}} + \frac{\theta\nu_0}{(k-j) \theta\nu_0} \leq 2\, \frac{j+1}{k-j}. 
\end{align*}
In particular,
\[ 1 + \frac{\gamma_{j+1}}{\lambda_k - \lambda_j} \leq \frac{k + j + 2}{k-j}. \]
Plugging this into \eqref{relrecskj} yields, for all $j\in[k-1]$,
\[ \ S^k_j \leq \frac{k + j + 2}{k-j} S^k_{j+1}. \]
Then, applying the previous inequality recursively and combining with $S^k_k = 1$, we get 
\[ \sum_{i=1}^{k} | v_{k,i} | = S^k_1 \leq \prod_{j=1}^{k-1} \frac{k + j + 2}{k-j} =\binom{2k+1}{k-1}= \frac{\binom{2k+1}{k-1} + \binom{2k+1}{k+2}}{2} \leq 2^{2k+1}/2 = 4^k. \]
 Combining with \eqref{sup<sumcoef}, we obtain the desired result. 
 \end{proof}

\appendix
\section{\texorpdfstring{$J_1$-Skorokhod topology}{} and weak convergence}

\subsection{{Definitions and remarks on the \texorpdfstring{$J_1$-Skorokhod topology}{}}{}}\label{A1}
For $T>0$, as in the beginning of Section~\ref{S2} we denote by $\Db_{0,T}$ the space of c\`{a}dl\`{a}g functions in $[0,T]$ with values on $\Rb$. Let $\Cs_T^\uparrow$ denote the set of increasing, continuous functions from $[0,T]$ onto itself. For $\lambda\in\Cs_T^\uparrow$, we set
\begin{eqnarray}
\lVert \lambda\rVert_T^0\coloneqq \sup_{0\leq u<s\leq T }\left \lvert \log\left(\frac{\lambda(s)-\lambda(u)}{s-u}\right)\right\rvert. \label{normbij}
\end{eqnarray}
We define the Billingsley metric $d_T^0$ in $\Db_{0,T}$ via
{\begin{eqnarray}
d_T^0(f,g)\coloneqq \inf_{\lambda\in\Cs_T^\uparrow}\{\lVert \lambda\rVert_T^0\vee \lVert f-g\circ\lambda\rVert_{T,\infty} \}, \ \text{where} \ \lVert f\rVert_{T,\infty}\coloneqq \sup_{s\in[0,T]}|f(s)|. \label{defdt0}
\end{eqnarray}}
The metric $d_T^0$ induces the $J_1$-Skorokhod topology in $\Db_{0,T}$. An important feature is that the space $(\Db_{0,T},d_T^0)$ is separable and complete. The role of the time-change $\lambda$ in the definition of $d_T^0$ is to capture the fact that two c\`{a}dl\`{a}g functions can be close in spite of a small difference between their jumping times.
\smallskip

For $T>0$, a function $\om\in\Db_{0,T}$ is said to be pure-jump if $\sum_{u\in(0,T]}|\Delta \om(u)|<\infty$ and 
for all $t\in (0,T]$, \[\om(t)-\om(0)=\sum_{u\in(0,t]}\Delta \om(u),\]
where $\Delta \om(u)\coloneqq \om(u)-\om(u-)$, $u\in[0,T]$. In the set of pure-jump functions, we consider the following metric 
{\begin{eqnarray}
d_T^\star (\om_1,\om_2)\coloneqq \inf_{\lambda\in\Cs_T^\uparrow}\left\{\lVert \lambda\rVert_T^0\vee \sum_{u\in[0,T]} \left\lvert \Delta \om_1(u)-\Delta (\om_2\circ \lambda)(u)\right\rvert \right\}.\label{defdtstar}
\end{eqnarray}}
The next result provides comparison inequalities between the metrics $d_T^0$ and $d_T^\star$. 
\begin{lemma}\label{zero-star}
 Let $\om_1$ and $\om_2$ be two pure-jump functions with $\om_1(0)=\om_2(0)=0$, then
\[d_T^0(\om_1,\om_2)\leq d_T^\star(\om_1,\om_2).\]
If in addition, $\om_1$ and $\om_2$ are non-decreasing, and $\om_1$ jumps exactly $n$ times in $[0,T]$, then
\[d_T^\star(\om_1,\om_2)\leq (4n+3) d_T^0(\om_1,\om_2).\]
\end{lemma}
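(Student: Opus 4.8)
The plan is to prove the two inequalities in Lemma~\ref{zero-star} separately, working directly with the definitions of the two metrics from Appendix~\ref{A1}. Both statements are really about comparing, for a fixed time-change $\lambda\in\Cs_T^\uparrow$, the sup-norm discrepancy $\lVert\om_1-\om_2\circ\lambda\rVert_{T,\infty}$ with the jump discrepancy $\sum_{u\in[0,T]}\lvert\Delta\om_1(u)-\Delta(\om_2\circ\lambda)(u)\rvert$, and the key elementary fact is that for pure-jump functions vanishing at $0$ one has $f(t)=\sum_{u\in[0,t]}\Delta f(u)$, so that sup-norm differences are controlled by sums of jump differences.

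For the first inequality, I would fix an arbitrary $\lambda\in\Cs_T^\uparrow$ and observe that, since $\om_1$ and $\om_2\circ\lambda$ are both pure-jump with value $0$ at $0$, for every $t\in[0,T]$
\[
\lvert\om_1(t)-(\om_2\circ\lambda)(t)\rvert=\Bigl\lvert\sum_{u\in[0,t]}\bigl(\Delta\om_1(u)-\Delta(\om_2\circ\lambda)(u)\bigr)\Bigr\rvert\leq\sum_{u\in[0,T]}\lvert\Delta\om_1(u)-\Delta(\om_2\circ\lambda)(u)\rvert.
\]
Taking the supremum over $t$ gives $\lVert\om_1-\om_2\circ\lambda\rVert_{T,\infty}\leq\sum_{u\in[0,T]}\lvert\Delta\om_1(u)-\Delta(\om_2\circ\lambda)(u)\rvert$, hence $\lVert\lambda\rVert_T^0\vee\lVert\om_1-\om_2\circ\lambda\rVert_{T,\infty}\leq\lVert\lambda\rVert_T^0\vee\sum_{u\in[0,T]}\lvert\Delta\om_1(u)-\Delta(\om_2\circ\lambda)(u)\rvert$. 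Taking the infimum over $\lambda$ on both sides yields $d_T^0(\om_1,\om_2)\leq d_T^\star(\om_1,\om_2)$. This direction is routine and requires no hypothesis beyond the pure-jump and vanishing-at-$0$ assumptions.

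The second inequality is where the real work lies, and this is the step I expect to be the main obstacle. Here one starts from a near-optimal $\lambda$ for $d_T^0$, say with $\lVert\lambda\rVert_T^0\vee\lVert\om_1-\om_2\circ\lambda\rVert_{T,\infty}$ close to $d_T^0(\om_1,\om_2)=:d$, and must bound the jump-sum $\sum_{u}\lvert\Delta\om_1(u)-\Delta(\om_2\circ\lambda)(u)\rvert$ by a multiple of $d$. Since $\om_1$ has exactly $n$ jumps, at locations $r_1<\cdots<r_n$, the function $\om_2\circ\lambda$ is being compared to a step function with only $n$ steps. The strategy is to first control, using $\lVert\om_1-\om_2\circ\lambda\rVert_{T,\infty}\leq d$ and monotonicity, that the total mass of jumps of $\om_2\circ\lambda$ falling \emph{outside} small neighbourhoods of the $r_i$ is at most of order $d$ (because on any interval free of the $r_i$ the increment of $\om_1$ is zero, so the increment of $\om_2\circ\lambda$ there is at most $2d$), and that near each $r_i$ the jump mass of $\om_2\circ\lambda$ differs from $\Delta\om_1(r_i)$ by at most order $d$. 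Carefully partitioning $[0,T]$ into the $n$ points $r_i$ and the $n+1$ complementary open intervals, and summing the $O(d)$ contributions from each of these $2n+1$ pieces while tracking the constants (each endpoint comparison costs up to $2d$, and there are boundary terms at $0$ and $T$), should produce the factor $4n+3$. I may need to modify $\lambda$ slightly—or rather modify the time-change used to evaluate $d_T^\star$—so that jumps of $\om_2$ are aligned with the corresponding $r_i$; the monotonicity of $\om_1,\om_2$ is essential here since it forces all jump differences to have controllable signs and prevents cancellation from hiding large individual discrepancies. The bookkeeping of the constant $4n+3$, and justifying that $\lambda$ can be taken or adjusted to send jumps of $\om_2$ exactly onto the $r_i$ without increasing $\lVert\lambda\rVert_T^0$ beyond $d$, is the delicate part of the argument.
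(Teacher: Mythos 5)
The first inequality is handled exactly as in the paper: the telescoping identity $f(t)-g(t)=\sum_{u\in[0,t]}(\Delta f(u)-\Delta g(u))$ for $f=\om_1$, $g=\om_2\circ\lambda$, followed by the triangle inequality and the infimum over $\lambda$. That part is complete and correct.

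For the second inequality your strategy (partition $[0,T]$ into the $n$ jump points of $\om_1$ and the complementary intervals, and bound each piece by a multiple of the sup-norm using monotonicity) is the right one and is in fact the paper's, but you have not carried it out, and the obstacle you flag — adjusting $\lambda$ so that the jumps of $\om_2$ land exactly on the $r_i$ without increasing $\lVert\lambda\rVert_T^0$ — is a red herring: such an adjustment is neither possible in general nor needed. The point you are missing is that the estimate one proves is \emph{pointwise in $\lambda$}: for every $\lambda\in\Cs_T^\uparrow$, writing $f=\om_1$ and $g=\om_2\circ\lambda$, one shows $\sum_{u\in[0,T]}\lvert\Delta f(u)-\Delta g(u)\rvert\leq(4n+3)\lVert f-g\rVert_{T,\infty}$; since $4n+3\geq 1$ this gives $\lVert\lambda\rVert_T^0\vee\sum_u\lvert\cdots\rvert\leq(4n+3)\bigl(\lVert\lambda\rVert_T^0\vee\lVert f-g\rVert_{T,\infty}\bigr)$, and taking the infimum over $\lambda$ finishes the proof with no alignment argument at all. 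The per-piece bookkeeping that you leave open is closed as follows: on an open interval $(t_k,t_{k+1})$ between consecutive jumps of $f$ one has $\sum_{u\in(t_k,t_{k+1})}\lvert\Delta f(u)-\Delta g(u)\rvert=g(t_{k+1}-)-g(t_k)=(g-f)(t_{k+1}-)-(g-f)(t_k)\leq 2\lVert f-g\rVert_{T,\infty}$, using that $f$ is constant there and $g$ is non-decreasing; each jump point contributes $\lvert\Delta f(t_k)-\Delta g(t_k)\rvert\leq 2\lVert f-g\rVert_{T,\infty}$; the initial interval $[0,t_1)$ contributes only $g(t_1-)=(g-f)(t_1-)\leq\lVert f-g\rVert_{T,\infty}$ because $f(0)=g(0)=0$; and the final interval $(t_n,T]$ contributes $2\lVert f-g\rVert_{T,\infty}$. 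The paper organizes exactly these estimates as an induction showing $\sum_{u\in[0,t_k]}\lvert\Delta f(u)-\Delta g(u)\rvert\leq(4k+1)\lVert f-g\rVert_{t_k,\infty}$, and the tail term then yields $4n+3$. As it stands, your proposal establishes the first inequality but only conjectures the second.
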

\begin{proof}
Let $\lambda\in \Cs_T^\uparrow$ and set $f\coloneqq \om_1$ and $g\coloneqq \om_2\circ\lambda$. Since $f$ and $g$ are pure-jump functions with the same value at $0$, we have, for any $t\in[0,T]$,
\[\lvert f(t)-g(t)\rvert=\left\lvert\sum_{u\in[0,t]}(\Delta f(u)-\Delta g(u))\right\rvert\leq \sum_{u\in[0,t]}\left\lvert\Delta f(u)-\Delta g(u)\right\rvert.\]
The first inequality follows. Now, assume that $\om_1$ and $\om_2$ are non-decreasing and that $\om_1$ has $n$ jumps in $[0,T]$. Let $t_1<\cdots<t_n$ be {the consecutive jump times} of $\om_1.$ We first prove that, for any $k\in[n]$,
{\begin{equation}\label{boundsumjumps}
\sum_{u\in[0,t_k]}\lvert\Delta f(u)-\Delta g(u) \rvert \leq (4k+1) \lVert f-g\rVert_{t_k,\infty},
\end{equation}}
where $\lVert \cdot \rVert_{t,\infty}$, $t>0$, is defined in \eqref{defdt0}. We proceed by induction on $k$. Note that
\[\sum_{u\in[0,t_1]}\lvert\Delta f(u)-\Delta g(u) \rvert=\sum_{u\in[0,t_1)}\Delta g(u) +\lvert\Delta f(t_1)-\Delta g(t_1) \rvert
\leq g(t_1-)+2\lVert f-g\rVert_{t_1,\infty}\leq 3\lVert f-g\rVert_{t_1,\infty},\]
which proves {\eqref{boundsumjumps} for $k=1$.} Now, assuming that {\eqref{boundsumjumps}} is true for $k\in[n-1]$, we obtain 
\begin{align*}
\sum_{u\in[0,t_{k+1}]}\!\!\!\!\lvert\Delta f(u)-\Delta g(u) \rvert&=\sum_{u\in[0,t_k]}\lvert\Delta f(u)-\Delta g(u) \rvert +\sum_{u\in(t_{k},t_{k+1})}\!\!\!\Delta g(u)+\lvert\Delta f(t_{k+1})-\Delta g(t_{k+1}) \rvert\\
\leq& (4k+1)\lVert f-g\rVert_{t_k,\infty}+g(t_{k+1}-)-g(t_k)+2\lVert f-g\rVert_{t_{k+1},\infty}\\
=& (4k+1)\lVert f-g\rVert_{t_k,\infty}+(g(t_{k+1}-)-f(t_{k+1}-))-(g(t_k)-f(t_{k}))+2\lVert f-g\rVert_{t_{k+1},\infty}\\
 \leq& (4k+1)\lVert f-g\rVert_{t_k,\infty}+4\lVert f-g\rVert_{t_{k+1},\infty}\leq  (4(k+1)+1)\lVert f-g\rVert_{t_{k+1},\infty}.
\end{align*} 
Hence, {\eqref{boundsumjumps}} also holds for $k+1$. This ends the proof of {\eqref{boundsumjumps} by induction.} Finally, using {\eqref{boundsumjumps},} we get
\begin{align*}
 \sum_{u\in[0,T]}\lvert\Delta f(u)-\Delta g(u) \rvert&=\sum_{u\in[0,t_{n}]}\lvert\Delta f(u)-\Delta g(u) \rvert+\sum_{u\in(t_{n},T]}\Delta g(u)\\
 &\leq (4n+1)\lVert f-g\rVert_{t_{n},\infty}+ g(T)-g(t_n)\leq (4n+3)\lVert f-g\rVert_{T,\infty},
\end{align*}
ending the proof.
\end{proof}
\subsection{Bounded Lipschitz metric and weak convergence}\label{A2}
Let $(E,d)$ denote a complete and separable metric space. It is well known that the topology of weak convergence of probability measures on $E$ is induced by the Prokhorov metric. An alternative metric inducing this topology is given by the bounded Lipschitz metric, whose definition is recalled in this section.  
\begin{definition}[Lipschitz function] A real valued function $F$ on $(E,d)$ is said to be Lipschitz if there is $K>0$ such that
\[|F(x)-F(y)|\leq K d(x,y),\quad\textrm{for all $x,y\in E$}.\]
 We denote by $\textrm{BL}(E)$ the vector space of bounded Lipschitz functions on $E$. The space $BL(E)$ is equipped with the norm 
{\begin{equation}
\lVert F\rVert_{\textrm{BL}}\coloneqq \sup_{x\in E}|F(x)| \vee \sup_{x,y\in E:\, x\neq y}\left\{\frac{|F(x)-F(y)|}{d(x,y)}\right\},\quad F\in BL(E). \label{defnormbl}
 \end{equation}}
\end{definition}
\begin{definition}[Bounded Lipschitz metric] \label{defblmenv}
 Let $\mu,\nu$ be two probability measures on $E$. The bounded Lipschitz distance between $\mu$ and $\nu$ is defined by
 {\begin{equation}\varrho_{E}(\mu,\nu)\coloneqq \sup\left\{\left\lvert \int F d\mu- \int F d\nu\right\rvert: F\in \textrm{BL}(E),\, \lVert F\rVert_{\textrm{BL}}\leq 1 \right\}.\label{defblm} \end{equation}}
\end{definition}
The bounded Lipschitz distance defines a metric on the space of probability measures on $E$. Moreover, the bounded Lipschitz distance metrizes the weak convergence of probability measures on $E$, i.e.
\[\varrho_E(\mu_n,\mu)\xrightarrow[n\to\infty ]{}0\quad \Longleftrightarrow\quad \mu_n \xrightarrow[n\to\infty]{(d)}\mu.\]

\section{Table of notations}\label{Nota}

\begin{table}[h!]
\begin{tabularx}{\textwidth}{@{}XXX@{}}
\toprule
  \textit{Notation} & \textit{Meaning} & \textit{First appearance} \\ 
  \toprule
     $X, (X(t))_{t \geq 0}$ & solution of \eqref{WFSDE} & Introduction\\
     $\sigma, \theta, \nu_0, \nu_1$ & parameters of $X$ & Introduction \\
     $J, (J(t))_{t \geq 0}$ & cumulative effect of environment & Introduction\\
     $\bar{J}^T, (\bar{J}^T(\beta))_{\beta\in[0,T]}$ & time reversal of $J$ & Section \ref{s24} \\
     $\zeta, (t_i,p_i)_{i\in I}$ & collection of jumps of $J$ & Introduction \\
     $\mu$ & L\'evy measure of $J$ & Introduction\\
     $S(t)$ & $\sigma t+J(t)$ & Introduction \\
     $\omega, (\omega(t))_{t \geq 0}$ & fixed realization of $J$ & Section \ref{s21} \\
     $\Delta \omega(t)$ & jump of $\omega$ at time $t$ & Section \ref{s21} \\
     $\om^\delta/\om_\delta$ & $\om$ removed of small/large jumps & Eq. \eqref{defomdelta}\\
     $\textbf{0}$ & null environment & Section \ref{s21} \\
     $(X^\om(t))_{t \geq 0}$ & quenched version of $X$ & Section \ref{s23} \\
     $\sigma_N, \theta_N$ & parameters of Moran model & Section \ref{s21} \\
     $(Z_N^\om(t))_{t \geq 0}$, $(Z_N^J(t))_{t\geq 0}$ & $\sharp$ of fit individuals (Moran model) & Section \ref{s21}, Section \ref{s22} \\
     $\As_N$ & generator of $(Z_N^J(t))_{t\geq 0}$ & Section \ref{s22} \\
     $\As_N^0$ & $\As_N$ in the case $\mu=0$ & Section \ref{s22} \\
     $\mu_N(\om)$ & law of $(Z_N^\om(t))_{t\in[0,T]}$ & Section \ref{s31} \\
  $X_N, J_N, X_N^\om, \om_N$ & renormalizations of $Z_N^J, J, Z_N^\om, \om$ & Theorem \ref{thm2.2} \\
  $\As_N^*/\As$ & generator of $X_N/X$ & Section \ref{s32} \\
         \bottomrule
\end{tabularx}
\end{table}
  \begin{table}

\begin{tabularx}{\textwidth}{@{}XXX@{}}
\toprule
  \textit{Notation} & \textit{Meaning} & \textit{First appearance} \\ 
  \toprule
  $E_N$ & state space of $X_N$ & Eq. \eqref{defen}\\
  $(\Gs(\beta))_{\beta\geq 0}/(\Gs_T^\om(\beta))_{\beta\geq 0}$ & annealed/quenched ASG & Definition \ref{defannealdasg}\\
  $\sigma_{m,k}$ & rates of simultaneous branchings & Eq. \eqref{smk}\\
  $(\bar{\Gs}(\beta))_{\beta\geq 0}/(\bar{\Gs}_T^\om(\beta))_{\beta\geq 0}$ & annealed/quenched killed ASG & Section \ref{s25} \\
  $(R(\beta))_{\beta\geq 0}/(R_T^\om(\beta))_{ \beta \geq 0}$ & line-counting process of $\bar{\Gs}/\bar{\Gs}_T^\om$ & Section \ref{s25} \\
  $Q^\mu_\dagger, (q^\mu_\dagger(i,j))_{i,j\in\Nb_0^\dagger}$ & generator of $(R(\beta))_{\beta\geq 0}$ & Eq. \eqref{krates}\\
  $\pi_n$/$\Pi_n(\omega)$ & absorption probabilities of $R$/$R_T^\om$ & Eq. \eqref{defpin}\\
  $\eta_X/\Ls^\omega$ & limit distribution of $X(t)/X^\omega(0)$ & Theorem \ref{thm2.4}\\
  $X(\infty)$ & random variable with law $\eta_X$ & Theorem \ref{thm2.4}\\
  $J\otimes_\tau^{} \om$ & mixed environment & Section \ref{s25} \\
  $h_T(x)/h_T^\om(x)$ & ancestral type distribution at $T$ & Section \ref{s26} \\
  $h(x)/h^{\omega}(x)$ & ancestral type distribution & Theorem \ref{thm2.6}\\
  $(L(\beta))_{\beta\geq 0}/(L_T^\om(\beta))_{ \beta \geq 0}$ & pLD-ASG's line-counting process & Section \ref{s26} \\
  $Q^\mu, (q^\mu(i,j))_{i,j\in\Nb}$ & generator of $(L(\beta))_{\beta\geq 0}$ & Eq. \eqref{kratespldasg}\\
  $\eta_L/\mu^\om$ & limit law of $L(T)/L_T^\om(T-)$ & Theorem \ref{thm2.6}\\
  $L(\infty)$ & random variable with law $\eta_L$ & Theorem \ref{thm2.6}\\
  $a_n$ & $\mathbb{P}(L(\infty) > n)$, coefficients of $h(x)$ & Theorem \ref{thm2.6}\\
  $\gamma_{i,j}$ & coefficients in recursion \eqref{fr} & Theorem \ref{thm2.6}\\
  $\Db_{s,t}$, $\Db$ & spaces of c\`{a}dl\`{a}g functions & Section \ref{S2} \\
  $\Db_T^\star$/$\Db^\star$ & set of possible $(\omega(t))_{t \in [0,T]}$/$\omega$ & Section \ref{s21} \\
  $d_T^0$/$d_T^\star$ & metric on $\Db_{0,T}$/$\Db_T^\star$ & Appendix \ref{A1} \\
  $\Cs_T^\uparrow, BL(E)$ & functional sets & Appendix \ref{A1}, \ref{A2} \\ 
  $\lVert \cdot \rVert_T^0, \lVert \cdot \rVert_{T,\infty}, \lVert \cdot \rVert_{\textrm{BL}}$ & functional norms & Appendix \ref{A1}, \ref{A2} \\
  $\varrho_{E}(\cdot ,\cdot )$ & Bounded Lipschitz metric & Definition \ref{defblmenv}\\
\bottomrule
\end{tabularx}
\end{table}
\addtocontents{toc}{\protect\setcounter{tocdepth}{0}}
\vspace{.5cm}
\textbf{Acknowledgements}
We are grateful to E. Baake for many interesting discussions. We also thank Sebastian Hummel and two anonymous referees for their valuable suggestions to improve the manuscript. F. Cordero received financial support from Deutsche Forschungsgemeinschaft~(CRC 1283 ``Taming Uncertainty'', Project~C1). This paper is supported by NSFC grant No. 11688101. Gr\'egoire V\'echambre acknowledges the support from Deutsche Forschungsgemeinschaft~(CRC 1283 ``Taming Uncertainty'', Project~C1) for his visit to Bielefeld University in January 2018. Fernando Cordero acknowledges the support of NYU-ECNU Institute of Mathematical Sciences at NYU Shanghai for his visit to NYU Shanghai in July 2018. 

\addtocontents{toc}{\protect\setcounter{tocdepth}{2}}
\bibliographystyle{abbrvnat}
\bibliography{referenced}

\end{document}